\documentclass[11pt]{amsart}

\usepackage{amsmath,amsthm,amsfonts,amssymb,bm,graphicx }

\usepackage{graphicx}
\usepackage{enumitem}
\usepackage{xparse}
\usepackage{tikz}
\usepackage{comment}
\usepackage{bm}
\usepackage[colorinlistoftodos]{todonotes}

\usepackage{cite}
\makeatletter
\newcommand{\citecomment}[2][]{\citen{#2}#1\citevar}
\newcommand{\citeone}[1]{\citecomment{#1}}
\newcommand{\citetwo}[2][]{\citecomment[,~#1]{#2}}
\newcommand{\citevar}{\@ifnextchar\bgroup{;~\citeone}{\@ifnextchar[{;~\citetwo}{]}}}
\newcommand{\citefirst}{\@ifnextchar\bgroup{\citeone}{\@ifnextchar[{\citetwo}{]}}}
\newcommand{\cites}{[\citefirst}
\makeatother

\usepackage{hyperref}
\hypersetup{colorlinks=true,citecolor=blue,linkcolor=blue,urlcolor=blue}

\usepackage{geometry}
\geometry{
	a4paper,
	total={170mm,257mm},
	left=20mm,
	top=20mm,
}

\theoremstyle{plain}
\newtheorem{theorem}{Theorem}
\newtheorem*{theorem*}{Theorem}
\newtheorem{lemma}[theorem]{Lemma}

\newtheorem{cor}[theorem]{Corollary}
\newtheorem{proposition}[theorem]{Proposition}

\theoremstyle{remark}

\numberwithin{theorem}{section}

\numberwithin{equation}{section}

\def\Z{\mathbb Z}
\def\R{\mathbb R}
\def\Q{\mathbb Q}

\def\O{\mathcal O}

\def\co{\mathcal O}
\def\ve{\varepsilon}
\DeclareMathOperator{\Tr}{Tr}
\def\bt{\blacktriangle}

\begin{document}

\author{V\'it\v{e}zslav Kala}

\author{Magdal\'ena Tinkov\'a}

\title[Universal quadratic forms, small norms and traces]
{Universal quadratic forms, small norms and traces\\ in families of number fields}

\address{Charles University, Faculty of Mathematics and Physics, Department of Algebra,
Sokolovsk\'{a} 83, 18600 Praha 8, Czech Republic}
\email{kala@karlin.mff.cuni.cz, tinkova.magdalena@gmail.com}

\thanks{Both authors were supported by projects PRIMUS/20/SCI/002 and UNCE/SCI/022 from Charles University, and by grant 17-04703Y from Czech Science Foundation (GA\v{C}R). M. T. was further supported by project GA UK 1298218 (from Charles University) and by SVV-2017-260456.}

\keywords{number field, simplest cubic field, totally real, universal quadratic form, indecomposable integer, codifferent, small norm, small trace}

\subjclass[2010]{11E12, 11R16, 11R80, 11H06, (11H50, 11J68, 11R44)}

\begin{abstract}
  We obtain good estimates on the ranks of universal quadratic forms over Shanks' family of the simplest cubic fields and several other families of totally real number fields. As the main tool we characterize all the indecomposable integers in these fields and the elements of the codifferent of small trace. We also determine the asymptotics of the number of principal ideals of norm less than the square root of the discriminant.
\end{abstract}

\setcounter{tocdepth}{1}  
\maketitle 
\tableofcontents

\section{Introduction}

The study of representations of integers by quadratic forms is a subject with long, rich, and fascinating history that dates back at least to Babylonian clay tablet Plimpton 322 from around 1800 BC that lists 15 Pythagorean triples, i.e., representations of $0$ by the indefinite ternary form $x^2+y^2-z^2$.

The modern European history starts with giants such as Fermat, Euler, and Gauss, who considered representations of primes by binary definite forms $x^2+dy^2$ (for $d\in\Z_{>0}$), and Lagrange 
who in 1770 proved the Four Square Theorem stating that \emph{every positive integer $n$ is of the form $x^2+y^2+z^2+w^2$}. Thus he established that this quaternary form is \emph{universal} in the sense that it represents all positive integers.

After numerous other results, the theory culminated in the 15- and 290-Theorems of Conway, his students Miller, Schneeberger, and Simons, and Bhargava and Hanke \cite{Bh,BH} that state that \emph{a positive definite quadratic form is universal if and only if it represents $1,2,3,\dots, 290$} (or $1,2,3,\dots, 15$ provided that the form is \textit{classical}, i.e., all its non-diagonal coefficients are even).

\medskip

Besides from this success story, there has been considerable interest in universal quadratic forms over (rings of integers of) number fields, i.e., totally positive forms that represent all totally positive integers (for precise definitions, see Sections \ref{sec:prel} and \ref{sec:qforms}).

Maa\ss\@ \cite{Ma} in 1941 used theta series to prove that the sum of three squares is universal over $\Q(\sqrt 5)$.
Conversely, 
Siegel~\cite{Si3} in~1945 showed that the sum of any number of squares is universal only over the number fields $\Q, \Q(\sqrt 5)$. For our further discussion it will be interesting to note that \textit{indecomposable elements}, i.e., \textit{totally positive algebraic integers $\alpha$ that cannot be written as the sum $\alpha=\beta+\gamma$ of totally positive integers $\beta,\gamma$}, figured prominently in his proof (under the name ``extremal elements'').

In order to study universal forms over number fields, it is thus necessary to consider more general quadratic forms than just the sum of squares.

\medskip

Hsia, Kitaoka, and Kneser~\cite{HKK} in 1978 established a version of Local-Global Principle for universal forms over number fields. In particular, their Theorem 3 implies that a (totally positive) universal form exists over every totally real number field $K$.

This was then followed by numerous results on the structure of universal forms over $K$, and in particular, on the existence of universal forms of small rank $r$. It is easy to see that there is
never a universal form of rank $r=1$ or $2$. Moreover, when the degree $d$ of $K$ is odd, it quickly follows from Hilbert Reciprocity Law that there is no ternary universal form~\cite{EK}.

As we have seen with the universality of the sum of three squares over $\Q(\sqrt 5)$, ternary universal forms may exist in even degrees. Nevertheless, Kitaoka formulated the influential Conjecture that \emph{there are only finitely many totally real fields $K$ admitting a ternary universal form}.

Motivated by Kitaoka's conjecture, 
Chan, M.-H.~Kim, and Raghavan~\cite{CKR} found all classical universal forms over real quadratic number fields $\Q(\sqrt D)$ -- they exist only when $D=2,3,5$.
Several other authors investigated the universality of forms of other small ranks over specific real quadratic fields, in particular, Deutsch~\cite{De0,De,De2}, Lee~\cite{Le}, and Sasaki~\cite{Sa}. See also the nice survey by M.-H.~Kim~\cite{Km}.

Considering infinite families of real quadratic fields $K=\Q(\sqrt D)$,
B.~M.~Kim~\cite{Ki,Ki2} proved that there are only finitely many $K$ over which there is a diagonal 7-ary universal quadratic form, and constructed explicit 8-ary diagonal universal forms for each squarefree $D=n^2-1$.

Blomer and Kala \cite{BK,Ka} then found infinitely many real quadratic fields without universal forms of a given rank $r$; Kala and Svoboda \cite{KS} later extended these results also to multiquadratic fields.
Blomer and Kala \cite{BK2} also generalized Kim's construction to all real quadratic fields and obtained lower and upper bounds on the ranks of diagonal universal forms. 

\v Cech, Lachman, Svoboda, Tinkov\' a, and Zemkov\' a \cite{CLSTZ} investigated the structure of indecomposables in biquadratic fields. Their work was then followed by Kr\' asensk\' y,  Tinkov\' a, and Zemkov\' a \cite{KTZ} who proved Kitaoka's Conjecture for biquadratic fields.

\medskip

A key factor behind most of this recent progress on universal forms over (multi-)quadratic fields has been the fact that we explicitly understand the structure of their indecomposables: they can be nicely constructed from the periodic continued fraction for $\sqrt D$ \cite{DS}, see Section \ref{sec:quadr} below.
Analogous results in higher degree fields unfortunately are not available and, in full generality, seem very far out of reach at the moment; Brunotte's general upper bound on their norms \cite{Bru} is unfortunately too rough.

Yatsyna and Kala \cite{Ya,KY} managed to partly sidestep this issue by working with elements (of the codifferent) of trace one as they can be approached geometrically using interlacing polynomials and analytically via Siegel's Formula \cite{Si1} for 
the special value $\zeta_K(-1)$ of the Dedekind $\zeta$-function.

\medskip

In this article, we partly combine both approaches. We completely characterize indecomposables in certain families of cubic fields, and we determine their minimal traces after multiplication by elements of the codifferent.

Our method is fairly general, but for concreteness, we primarily focus on Shanks' family of the \textit{simplest cubic fields} \cite{Sh}, defined as $K=\Q(\rho)$, where $\rho$ is the largest root of the polynomial $$x^3-ax^2-(a+3)x-1\text{ for some fixed }a\in\Z_{\geq -1}.$$
These fields have a number of advantageous properties: they are Galois, possess units of all signatures, and the ring of integers $\co_K$ equals $\Z[\rho]$ for a positive density of $a$, namely, at least for those $a$ for which the square root of the discriminant $\Delta^{1/2}=a^2+3a+9$ is squarefree.
Their regulators are fairly small, and so the class numbers are quite large; 
overall the properties of Shanks' family were richly studied and also served as a prototype for the study of other families \cite{Ba,By,Cu,Fo,Kis,Lec,Let,Lo,Wa,Wa2}. Further relevant works dealt, e.g., with elliptic curves, diophantine equations, and numerous other topics \cite{Co,Du,Ho,Mi,MPL,LPV,LW,Th2,XCZ}.


\pagebreak

Our main result on universal forms is the following:

\begin{theorem}\label{thm:main univ}
	Let $K$ be the simplest cubic field with
	$a\in\Z_{\geq -1}$ such that $\co_K=\Z[\rho]$. 
	Then
	\begin{itemize}
		\item there is a \emph{diagonal} universal form of rank $3(a^2+3a+6)$ over $K$,
		\item every \emph{classical} universal form over $K$ has rank at least $\frac{a^2+3a+8}6$,
		\item every (non-classical) universal form over $K$ has rank at least $\frac{\sqrt{a^2+3a+8}}{3\sqrt 2}$ if $a\geq 21$.
	\end{itemize}
\end{theorem}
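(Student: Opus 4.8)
The plan is to treat the three bounds through a common device: multiplication by a totally positive element $\delta$ of the codifferent $\co_K^\vee$ of minimal trace, which by the small-trace classification established above can be taken with $\Tr_{K/\Q}(\delta)=1$. For any totally positive quadratic form $Q$ of rank $r$ over $\co_K$, the map $x\mapsto \Tr_{K/\Q}\!\big(\delta\,Q(x)\big)$ is a positive definite $\Z$-valued quadratic form $\widetilde Q$ on the $\Z$-lattice $\co_K^{\,r}\cong\Z^{3r}$, and it is classical (its symmetric Gram entries lie in $\Z$) exactly when $Q$ is classical. Since $\Tr_{K/\Q}(\delta\beta)\in\Z_{>0}$ for every totally positive $\beta\in\co_K$, the totally positive integers $\beta$ with $\Tr_{K/\Q}(\delta\beta)=1$ are sent precisely to the minimal (norm-one) vectors of $\widetilde Q$, and their number $M$, read off from the small-trace classification, is of size $a^2+3a+8$. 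All three bounds come from comparing $M$, respectively the list of indecomposables, with what a rank-$r$ form can accommodate.

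For the construction (first bullet) I would start from the complete list of indecomposables produced earlier. Because $K$ has units of all signatures, every totally positive unit is a square, so the indecomposables fall into exactly $a^2+3a+6$ orbits under multiplication by totally positive units, and replacing a diagonal coefficient by a unit multiple within its orbit does not change what it represents. First I would fix one representative $\delta_i$ per orbit and show, using the explicit additive relations among the indecomposables, that every totally positive integer is a nonnegative combination of the $\delta_i$ realizable by a diagonal form that uses each $\delta_i$ with a bounded multiplicity; bookkeeping the multiplicities yields a diagonal universal form in $3(a^2+3a+6)$ variables.

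For the lower bounds I would pass to $\widetilde Q$ and count its minimal vectors. The $M$ totally positive integers of $\delta$-trace $1$ are pairwise distinct values $Q(v)$, hence force $M$ distinct lines $\R v$ among the norm-one vectors of $\widetilde Q$. If $Q$ is classical then $\widetilde Q$ is a classical integral lattice, its unit vectors are mutually orthogonal, and a rank-$3r$ lattice carries at most $3r$ of them up to sign; tracking the sign and embedding factors gives $r\geq \tfrac16(a^2+3a+8)$. If $Q$ is only $\co_K$-valued (non-classical) then $\widetilde Q$ is integral but no longer classical, so two distinct unit vectors can have inner product $\pm\tfrac12$, and the norm-one vectors then span lines meeting at $60^\circ$ or $90^\circ$; the absolute bound for such few-distance sets of lines allows only $O\!\big((3r)^2\big)$ of them. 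Hence $M=O(r^2)$, which after inserting the exact count and optimizing the constant yields $r\geq \frac{\sqrt{a^2+3a+8}}{3\sqrt2}$, with $a\geq 21$ being what makes the minimum of $\widetilde Q$ equal to $1$ and the few-distance count effective.

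The hard part will be twofold. On the arithmetic side, the construction requires controlling how totally positive integers decompose into the listed indecomposables tightly enough to bound the diagonal multiplicities by the constant hidden in $3(a^2+3a+6)$, so that the explicit description of the indecomposables and of $\delta$ is indispensable. On the geometric side, the non-classical bound hinges on the passage from ``distinct represented trace-one elements'' to a two-distance configuration of unit vectors and on the resulting quadratic (rather than linear) count; verifying that the minimum of $\widetilde Q$ is genuinely $1$ — not smaller — for all $a\geq 21$, and that no lines are lost under the sign identification, is the delicate step that simultaneously produces the square-root shape and the threshold.
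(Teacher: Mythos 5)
Your lower-bound arguments (second and third bullets) follow essentially the same route as the paper: push $Q$ down to the positive definite $\Z$-form $\Tr(\delta Q(x))$ of rank $3r$ and count its minimal vectors, using orthogonality of the norm-one vectors in the classical case, and in the non-classical case the fact that distinct norm-one vectors have inner products in $\{0,\pm\tfrac12\}$, so that their number is at most quadratic in the rank. Three corrections, though. First, the number of distinct totally positive $\beta$ with $\Tr(\delta\beta)=1$ that you can exhibit is $\tfrac{a^2+3a+8}{2}$ (the $\tfrac{(a+1)(a+2)}{2}$ triangle elements plus three units), not $a^2+3a+8$; it is this count divided by $d=3$ that gives $\tfrac{a^2+3a+8}{6}$ directly, with no further ``up to sign'' division. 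Second, a generic absolute bound for two-distance sets of lines will not produce the stated constant: the paper doubles the form to get a classical lattice of minimal norm $2$ and invokes the classification of lattices generated by roots \cite[Theorem 4.10.6]{martinet}, which is what yields the precise bound $R(R-1)$ on the number of $\pm$-pairs of minimal vectors for $R\geq 16$; your $60^\circ/90^\circ$ configuration is exactly a root system, so you would end up needing the same classification to pin down $\tfrac{1}{3\sqrt2}$. Third, the hypothesis $a\geq 21$ has nothing to do with the minimum of $\widetilde Q$ being $1$ (it equals $1$ for every $a$ in the family); it guarantees $n=\tfrac{a^2+3a+8}{2}\geq 240$, which forces the root lattice to have rank at least $16$ and thereby excludes the exceptional lattices $E_6,E_7,E_8$, for which the quadratic bound $R(R-1)$ fails.

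The genuine gap is in the first bullet. You reduce correctly to writing every $\alpha\in\co_K^+$ as a sum of indecomposables and grouping the summands by orbit, but the ``bounded multiplicity'' you need is precisely the nontrivial point, and no mechanism is offered for it: $\alpha$ may a priori decompose as $\sigma\ve_1^2+\dots+\sigma\ve_k^2$ for a single representative $\sigma$ and arbitrarily many distinct units $\ve_i$, so one must know that any sum of squares in $\co_K$ is already a sum of boundedly many squares. The missing ingredient is the Pythagoras number: $s(\co_K)\leq 6$ in degree $3$ \cite[Corollary 3.3]{KY}, which is exactly what lets each grouped term be represented by the diagonal block $\sigma(x_1^2+\dots+x_6^2)$. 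Your orbit count is also off by a factor of two: since the field has units of all signatures, totally positive units coincide with squares of units, so there are $\tfrac{a^2+3a+6}{2}$ classes of indecomposables modulo squares of units, and the stated rank arises as $6\cdot\tfrac{a^2+3a+6}{2}=3(a^2+3a+6)$.
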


Note that this gives very sharp bounds on the minimal ranks of classical universal forms, as we have shown that they are $\asymp a^2\sim\Delta^{1/2}$. Even the corresponding constants are explicit: the lower bound is $\sim a^2/6$ and the upper bound is $\sim 3a^2$  (see Section \ref{sec:prel} for the analytic notation $\asymp,\ll,\sim$).
For non-classical universal forms we obtain a lower bound of magnitude $\asymp a\sim\Delta^{1/4}$.

In particular, we see that if $a\geq 3$, then there is no ternary classical universal form, and that the same holds for non-classical forms if $a\geq 21$, confirming Kitaoka's Conjecture for this family.

These results go far beyond the previous state of the art in two directions at once: They deal with a family of fields in degree greater than $2$ and moreover provide sharp bounds on the minimal ranks.

\medskip

The main technical tool for the proof of these results is the following characterization:

\begin{theorem} \label{thm:main}
Let $K$ be the simplest cubic field with the parameter
$a\in\Z_{\geq -1}$ such that $\co_K=\Z[\rho]$. 
The elements $1$, $1+\rho+\rho^2$, and $-v-w\rho+(v+1)\rho^2$ where $0\leq v\leq a$ and $v(a+2)+1\leq w\leq (v+1)(a+1)$ are, up to multiplication by totally positive units, all the indecomposable elements in $\co_K$.
\end{theorem}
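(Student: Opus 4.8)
The plan is to reason geometrically, via the three real embeddings. Write $\sigma$ for the generator of $\mathrm{Gal}(K/\Q)$ acting by $\rho\mapsto -1/(1+\rho)$, and order the conjugates as $\rho=\rho^{(1)}>\rho^{(2)}>\rho^{(3)}$, so that $\rho^{(1)}\approx a+1$, $\rho^{(2)}\approx -1/(a+2)$, and $\rho^{(3)}\approx -1$; the first step is to record explicit bounds on these three roots (and on $(\rho^{(i)})^2$) that are uniform in $a$. Embedding $\co_K=\Z[\rho]$ into $\R^3$ by $\alpha\mapsto(\alpha^{(1)},\alpha^{(2)},\alpha^{(3)})$, total positivity of $\alpha=x+y\rho+z\rho^2$ becomes three linear inequalities in $(x,y,z)$, one per conjugate. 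The key observation is the ``empty box'' criterion: a totally positive $\alpha$ is indecomposable if and only if the open box $\prod_{i}(0,\alpha^{(i)})\subset\R^3$ contains no point of the lattice $\co_K$, because a lattice point $\beta$ in this box is precisely a totally positive integer with $\alpha-\beta$ also totally positive.

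Next I would exploit the two sources of symmetry to cut the problem down to a bounded region. The automorphism $\sigma$ cyclically permutes the coordinates of the conjugate vector, so it preserves indecomposability and lets me normalize which conjugate is largest. The totally positive units form a rank-$2$ group; since squares of units such as $\rho^2$ and $(1+\rho)^2$ are totally positive, I would first pin down an explicit generating set (equivalently, the index of the totally positive units in the full unit group) by a regulator/index computation for Shanks' family. Multiplication by a totally positive unit scales the conjugate vector coordinatewise, i.e.\ translates its image in the trace-zero plane under the logarithmic embedding; choosing the unit to bring this image into a fixed fundamental parallelogram confines the ``shape'' of any indecomposable. Combined with the empty-box criterion, this forces the coordinates $(x,y,z)$ of a normalized indecomposable into an explicit bounded set, out of which I expect exactly the coefficient pattern $z=v+1$, $y=-w$, $x=-v$ (together with the two sporadic elements $1$ and $1+\rho+\rho^2$) to emerge.

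With the region under control, I would finish in two directions. For the forward direction I verify that each listed element is totally positive and that its box is empty: since the two smaller conjugates of $\alpha_{v,w}$ are $O(1)$, the box is thin in two directions, so only finitely many candidate $\beta$ need be tested against the linear inequalities of the first step, and the constraints $v(a+2)+1\le w\le (v+1)(a+1)$ are precisely the integers $w$ for which no lattice point slips into $\prod_i(0,\alpha_{v,w}^{(i)})$. For completeness I would argue that the additive monoid generated by the totally positive-unit multiples of the listed elements is all of $\co_K^{+}$: given a totally positive $\alpha$ that is not a unit multiple of a listed element, I produce a listed indecomposable $\beta$ (up to a unit) with $\alpha-\beta$ totally positive and of strictly smaller norm, and descend. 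Since an indecomposable cannot be a sum of two or more such pieces, it must itself be a unit multiple of a single listed element, which yields the claimed list.

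The main obstacle is this completeness/descent step, together with making the reduction of the previous paragraph fully rigorous: everything hinges on controlling the error terms in $\rho^{(i)}\approx a+1,\,-1/(a+2),\,-1$ uniformly in $a$, so that the boundary ranges of $w$ come out exactly and no extra indecomposable is missed near the edges of the fundamental domain. I also expect the small parameters $a\in\{-1,0,1\}$ to require separate, direct verification, since there the asymptotic estimates are too crude to decide the relevant inequalities; these cases can be settled by a finite computation.
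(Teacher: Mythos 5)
Your first half --- normalizing an indecomposable by totally positive units into a bounded fundamental region and reading off the candidate coefficient patterns from that region --- is essentially the paper's argument: the paper invokes the Shintani/Thomas--Vasquez cone decomposition for the action of $\langle \rho^2,(\rho'')^{-2}\rangle$ and enumerates the lattice points of the two resulting parallelepipeds, obtaining exactly the candidates $1$, $1+\rho+\rho^2$ and $-v-w\rho+(v+1)\rho^2$ (and discarding the decomposable points $-v-w\rho+(v+2)\rho^2$ by an explicit splitting). Your logarithmic-embedding normalization plus the observation that the box of an indecomposable cannot contain any of the generating units amounts to the same reduction.

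The genuine gap is in your verification that the listed elements are indecomposable. Your plan rests on the claim that the two smaller conjugates of $\alpha_{v,w}$ are $O(1)$, so that the open box $\prod_i(0,\alpha_{v,w}^{(i)})$ is thin in two directions and only finitely many candidate $\beta$ need to be tested. This is false for the bulk of the triangle: for $v\approx a/3$ and $w$ near the middle of its range, two of the three conjugates are of size $\asymp a^2$ and the norm is $\asymp a^4$, so the box has volume $\asymp a^4$ against a lattice covolume $\sqrt\Delta\asymp a^2$; volume heuristics predict $\asymp a^2$ lattice points in such a box, and ruling them all out is precisely the hard content of the theorem, not a finite check uniform in $a$. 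The paper sidesteps this entirely with a single explicit totally positive element $\delta\in\mathcal O_K^{\vee,+}$ of the codifferent satisfying $\mathrm{Tr}(\alpha\delta)=x+z$ for $\alpha=x+y\rho+z\rho^2$, so that every triangle element has $\mathrm{Tr}(\alpha\delta)=1$ and is indecomposable at a stroke; this device is also what later powers the lower bounds for universal forms. Be warned that no such $\delta$ exists for the remaining candidate $1+\rho+\rho^2$: the paper proves its minimal trace against the totally positive codifferent is $2$, and its indecomposability needs a separate short argument (it cannot split as a sum of two trace-one elements). So any correct completion of your plan must either find a workable substitute for the trace-one certificate or give a genuinely uniform-in-$a$ emptiness proof for boxes of volume up to $\asymp a^4$; as written, that step would fail.
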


For the proof, we establish that there are no other indecomposables by a geometry of numbers argument in Section \ref{sec:parall}. The indecomposability of these elements is then proved in Section \ref{sec:indeco} using the codifferent $\O_K^{\vee}$: For most of these elements $\alpha$ we find a totally positive element $\delta\in\O_K^{\vee,+}$ such that $\Tr(\alpha\delta)=1$, which immediately implies that $\alpha$ is indecomposable. For if $\alpha=\beta+\gamma$, then 
$1=\Tr(\alpha\delta)=\Tr(\beta\delta)+\Tr(\gamma\delta)\geq 1+1=2$, a contradiction.

However, we also show that in each (monogenic) simplest cubic field, the indecomposable $\alpha=1+\rho+\rho^2$ has minimal trace $2$:
$$\min_{\delta\in\O_K^{\vee,+}}\textup{Tr}(\alpha\delta)=2.$$
This is in sharp contrast to indecomposables in real quadratic fields that all have trace 1 by Proposition \ref{pr:quad}
and indicates that Yatsyna's approach \cite{Ya} based on interlacing polynomials probably cannot give all indecomposables in general.

In general, elements of the codifferent that have small trace are an important object of study; for example, they prominently figure in the aforementioned Siegel's Formula \cite{Si1} for the special value $\zeta_K(-1)$ of the Dedekind $\zeta$-function.

\medskip

Further, we show how one can use indecomposables to count elements of small norm.
Lemmermeyer and Peth\" o \cite[Theorem 1]{LP} proved that the smallest norm of a primitive non-unit element of a simplest cubic field is $2a+3$ -- we extend their result up to norm $a^2$.

For $X\in\R$ let $\mathcal P_a(X)$ be the number of primitive principal ideals $I$ (in a given monogenic simplest cubic field $K$) with norm $N(I)\leq X$. Recall that an ideal $I$ (or an element $\beta$) is primitive if $n\nmid I$ (or $n\nmid \beta$) for all $n\in\Z_{\geq 2}$.

\begin{theorem}\label{th count}	Let $K$ be the simplest cubic field with the parameter
	$a\in\Z_{\geq -1}$ such that $\co_K=\Z[\rho]$.  
	Let $\delta\in [0,1]$. Then $\mathcal P_a(a^{1+\delta})\asymp a^{2\delta/3}$.	
\end{theorem}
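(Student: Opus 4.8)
The plan is to count principal ideals through their generators modulo units, after rewriting the target in scale-free form: with $X=a^{1+\delta}$ we have $a^{2\delta/3}=(X/a)^{2/3}$, so the assertion is $\mathcal P_a(X)\asymp (X/a)^{2/3}$ for $a\le X\le a^2$. Since $K$ has units of every signature, every principal ideal has a totally positive generator, and counting primitive principal ideals of norm $\le X$ is, up to a bounded factor from $\co_K^\times$, the same as counting primitive $\beta\in\co_K$ with $|N(\beta)|\le X$ modulo units. The key structural point — which I isolate as the main lemma below — is that throughout the range $N\le a^2$ each such ideal has, after multiplication by a unit, a generator in the rank-two sublattice $\Z+\Z\rho$; granting this, the count reduces to the binary cubic \emph{norm form}, handled next.

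For $\beta=s+t\rho$ one computes
\begin{equation}
N(s+t\rho)=F(s,t):=s^3+as^2t-(a+3)st^2+t^3=\prod_{i=1}^{3}\bigl(s+\rho^{(i)}t\bigr),
\end{equation}
a binary cubic that splits over $\R$, with $\operatorname{disc}F=\operatorname{disc}\Z[\rho]=\operatorname{disc}K=(a^2+3a+9)^2\asymp a^4$. For a splitting cubic form the area of $\{|F|\le X\}$ is $X^{2/3}\operatorname{Area}\{|F|\le1\}$ and $\operatorname{Area}\{|F|\le1\}\asymp|\operatorname{disc}F|^{-1/6}$, whence
\begin{equation}
\#\{(s,t)\in\Z^2:0<|F(s,t)|\le X\}\asymp |\operatorname{disc}F|^{-1/6}X^{2/3}\asymp (X/a)^{2/3}.
\end{equation}
That the integer points realise this area can be seen directly: for each $s$ with $1\le s\ll (X/a)^{1/3}$ the term $-(a+3)st^2$ dominates, so $|F|\asymp a\,s\,t^2$ and there are $\asymp (X/(as))^{1/2}$ admissible $t$; summing, $\sum_{s\ll (X/a)^{1/3}}(X/(as))^{1/2}\asymp (X/a)^{2/3}$, and the remaining regions are comparable. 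Primitivity of $s+t\rho$ is exactly $\gcd(s,t)=1$ (changing only the constant), the degenerate lines $t=0,\ s=0,\ s=t$ carry only the units $\pm1,\pm\rho,\pm(1+\rho)$, and any ideal has only boundedly many plane generators (they solve a Thue equation $F=\pm N$). Hence $(s,t)\mapsto(s+t\rho)$ is boundedly-to-one onto the principal ideals with a plane generator, giving the lower bound $\mathcal P_a(X)\gg (X/a)^{2/3}$.

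The main obstacle is the matching upper bound, for which I must prove the lemma assumed above: \emph{every primitive non-unit $\beta$ with $|N(\beta)|\le a^2$ has, up to a unit, vanishing $\rho^2$-coordinate.} This is precisely the promised extension of Lemmermeyer--Peth\"o's theorem from the single norm $2a+3$ up to norm $a^2$, and I would establish it by the geometry-of-numbers methods already developed for the indecomposables in Sections \ref{sec:parall}--\ref{sec:indeco}, exploiting the explicit fundamental units $\rho$ and $1+\rho$ (whose logarithmic embeddings have entries $\asymp\log a$). Balancing the conjugates of $\beta=x+y\rho+z\rho^2$ by units and inspecting $\beta^{(1)}\approx x+ya+za^2$ against $\beta^{(3)}\approx x-y+z$, one sees that a nonzero $z$ cannot be absorbed in both conjugates at once, forcing some $|\beta^{(i)}|\gg a^2$ and hence $|N(\beta)|>a^2$. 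The delicate part is to make this quantitative so that the threshold falls exactly at $a^2$ — i.e.\ to bound the loss in the balancing step against the known shape of the unit lattice — the sharpness being witnessed by $N(1+\rho+\rho^2)=a^2+3a+9$, the least norm of a genuinely non-plane element, lying just above the range. With the lemma in hand the non-plane elements contribute $O(1)$, so $\mathcal P_a(X)\ll (X/a)^{2/3}$ and the two bounds combine to the claim.
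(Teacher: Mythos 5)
Your reformulation of the count is attractive and essentially consistent with the paper: the elements the paper ends up counting, $-w\rho+k\rho^2=\rho(-w+k\rho)$, do lie in $\Z+\Z\rho$ up to a unit, and your identity $\operatorname{Area}\{|F|\le 1\}=c\,|\operatorname{disc}F|^{-1/6}$ (a $GL_2(\R)$-invariance argument) explains conceptually why the answer is $(X/a)^{2/3}=a^{2\delta/3}$, where the paper instead counts pairs $(k,w)$ with $k^2w<a^{\delta}$ and $kw^2<a^{\delta}$ directly. But the proposal has a genuine gap, and you name it yourself: the ``main lemma'' that every primitive non-unit of norm at most $a^2$ has, up to a unit, a generator in the plane $\Z+\Z\rho$ is precisely the hard content of the theorem, and your sketch of it does not work as stated. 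A qualitative balancing of $\beta^{(1)}\approx x+ya+za^2$ against $\beta^{(3)}\approx x-y+z$ cannot locate the threshold, because the statement is only barely true: $1+\rho+\rho^2$ has norm $a^2+3a+9$ and the non-plane indecomposables $-v-w\rho+(v+1)\rho^2$ with $v\ge 1$ have norms starting at $2a^2+6a-9$, so the cutoff $a^2$ is missed only by a constant factor and any loss in the balancing step destroys the conclusion. The paper's route to this lemma is entirely different and is where all the work lies: it uses the superadditivity ${N}(\sum\alpha_i)^{1/3}\ge\sum N(\alpha_i)^{1/3}$ to reduce to sums of indecomposables of norm $\le a^2$, classifies those indecomposables explicitly (only $\alpha_w=-w\rho+\rho^2$ with $w<\sqrt a$ survive, via the explicit norm form on the triangle $\bt$), and then runs a case analysis over all sums $1+\ve$, $\alpha_w+\ve$, $\alpha_w+\ve\alpha_t$, $\alpha_w+\ve(\alpha_t)'$ using the unit estimates of Lemmas \ref{lemma:units>a^2} and \ref{lemma:a^4/a^2+a^2}. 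Nothing in your proposal substitutes for this.

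Two further steps are asserted without justification and are not automatic. First, the claim that the lattice-point count in $\{0<|F|\le X\}$ matches the area: the region has cusps along the three lines $s+\rho^{(i)}t=0$, and in the relevant range $X\le a^2$ one needs $|s+\rho t|\ll X/(a^2t^2)$ there, so controlling the cusp contribution uniformly in $a$ amounts to an effective irrationality measure for $\rho$ depending on $a$; your direct count via $|F|\asymp ast^2$ handles only the bulk region. (The paper avoids this entirely because its candidates are already confined to $k,w>0$ with $k^2w,kw^2<a^\delta$.) Second, the ``boundedly-to-one'' claim needs a bound, uniform in $a$ and in the norm, on the number of unit multiples of a fixed $\beta$ landing in $\Z+\Z\rho$; general Thue-equation counting gives bounds growing with the number of prime factors of the norm, so this requires its own argument. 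As it stands, the proposal proves neither the upper bound $\mathcal P_a(a^{1+\delta})\ll a^{2\delta/3}$ nor, strictly, the well-definedness of the correspondence underlying the lower bound.
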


This Theorem gives quite a strong and surprising result! 
The discriminant of $K$ is $\Delta\sim a^4$, and so we are counting principal ideals of norms $<\Delta^{1/2}$.
Moreover, by the Class Number Formula, the class number $h$ is roughly of size $a^2\sim \Delta^{1/2}$ (up to logarithmic factors and the $L$-function value).

Thus, assuming GRH and the equidistribution of ideals in the class group, we should expect to have $\Delta^{1/2}/h\ll(\log b)^c$ primitive principal ideals of norm $<\Delta^{1/2}$ -- whereas Theorem \ref{th count} shows that there are $\asymp \Delta^{1/6}$ such ideals!

This is (again) in stark contrast with the situation in families of quadratic fields, where the number of primitive principal ideals of norm $<\Delta^{1/2}$ roughly obeys the prediction.
We prove both of these results in Section \ref{sec:small norms}. The proofs here (as well as in Sections \ref{sec:parall} and \ref{sec:indeco}) require quite a large amount of easy, yet somewhat tedious calculations, so we omit some of the details -- hopefully we have managed to strike the right balance between readability and completeness of the arguments.

\medskip

To conclude the Introduction, note that our methods are fairly robust and should apply to a broad range of families of number fields; here we primarily focused on the simplest cubic fields for concreteness and in order to avoid unnecessary technicalities. As an illustration, we briefly discuss results for two other cubic families in Section \ref{sec:other}.
Both behave quite differently from the simplest fields: they have only $\asymp \Delta^{1/4}$ indecomposables, and so the ranks of universal forms are also quite small. The indecomposables also typically (seem to) have trace $>1$;
in particular, in Subsection \ref{subsec:trace3} we find an element with minimal trace 3 that motivates us to formulate several open questions concerning this minimal trace.

As for the main simplifying properties of the simplest cubic fields, the use of \textit{monogenicity} 
can be easily sidestepped by working with non-maximal orders that have a power basis. But even for the full ring of integers $\co_K$, the main problem with the absence of a power basis seems to be the more complicated structure of the codifferent which should not be too critical. The fact that our fields are \textit{Galois} provided mostly a (very pleasant!) technical simplification. Similarly with the \textit{existence of units of all signatures}: this is important mostly in the proof of Theorem \ref{th count} that in general can be proved by working with indecomposable elements of all possible signatures.

In a sense we were lucky that the structure of indecomposables turned out to be as simple as Theorem \ref{thm:main}, rather than having a complicated shape as in a typical real quadratic field with long period length $s$. One could of course conversely use this to give a general definition of a ``simple field'' (or simple family) as one in which the set of indecomposables (up to multiplication by units) is the union of a ``small'' number $c$ of convex sets, where $c$ depends at most on the degree $d$ of the number field ($c=2$ in the simplest cubic case, and there are quadratic families with $c=1$). Characterizing all number fields with a given value of $c$ seems like a challenging, but very worthwhile, task.

When the number $c$ of convex components of indecomposables is large, one will most likely need to refine the geometric arguments of Section \ref{sec:parall} with better apriori understanding of the structure of indecomposables, perhaps by using a generalization of continued fractions.
For the simplest cubic fields we have succeeded in this task using periodic Jacobi-Perron algorithm -- we hope to extend this also to other families in a forthcoming article.

Hence we believe that extensions and generalizations of this article will turn out to be a fruitful field of work!

\section*{Acknowledgments}

We thank Valentin Blomer and Pavlo Yatsyna for our enriching discussions about the topics of this paper, in particular, about Sections \ref{sec:small norms} and \ref{sec:qforms}, respectively.

\section{Preliminaries}\label{sec:prel}

Let $K$ be a totally real number field of degree $d=[K:\Q]$ and $\O_K$ its ring of algebraic integers. The number field $K$ is \textit{monogenic} if there is a power basis for $\co_K$, i.e., $\co_K=Z[\eta]$ for some $\eta$.

The \emph{trace} of $\alpha\in K$ is $\text{Tr}(\alpha)=\sum_{\sigma}\sigma(\alpha)$ and the \emph{norm} of $\alpha$ is $N(\alpha)=\prod_{\sigma}\sigma(\alpha)$ where $\sigma$ in this sum and product runs over all the embeddings $\sigma:K\hookrightarrow\R$. 

We say that an element $\alpha\in K$ is \emph{totally positive} if $\sigma(\alpha)>0$ for all $\sigma$. We denote the set of totally positive algebraic integers of $K$ by $\O_K^{+}$. The unit group is $\co_K^\times$ and $\co_K^{\times,+}$ is the subgroup of totally positive units. Two elements $\alpha,\beta\in K$ are \textit{conjugate} if $\beta=\sigma(\alpha)$ for some $\sigma:K\hookrightarrow\R$, and \textit{associated} if $\beta=\varepsilon\alpha$ for some unit $\ve\in\co_K^\times$.

If $\co\subset \co_K$ is an \textit{order} in $K$ (i.e., a subring of finite index in $\co_K$), then we define the \textit{codifferent} of $\co$ as
\[
\O^{\vee}=\{\delta\in K\mid\text{Tr}(\alpha\delta)\in\Z\text{ for all }\alpha\in\O\}.
\]
If $\O=\Z[\eta]$ for some $\eta\in\co$, and $f$ is the minimal polynomial of $\eta$, then \cite[Proposition 4.17]{N}
\[
\O^{\vee}=\frac{1}{f'(\eta)}\co.
\]
In particular, this holds for the maximal order $\co_K$ if the field $K$ is {monogenic}.

An element $\alpha\in\O^{+}=\co\cap\co_K^+$ is called \emph{indecomposable in $\co$} if it cannot be expressed as $\alpha=\beta+\gamma$ for $\beta,\gamma\in \O^{+}$. Elements that are indecomposable in $\co_K$ are often just called indecomposables (or indecomposables in $K$).
For example, for $K=\Q$, we have only one indecomposable, namely $\alpha=1$.

$\O^{\vee,+}$ denotes the subset of totally positive elements of the codifferent. If for $\alpha\in\O^{+}$ we can find $\delta\in\O^{\vee,+}$ such that $\text{Tr}(\alpha\delta)=1$, then $\alpha$ is necessarily indecomposable.

\medskip

In this paper, we are mostly interested in indecomposable integers in the case of a simplest cubic field $K=\Q(\rho)$, where $\rho$ is the largest root of the polynomial 
$x^3-ax^2-(a+3)x-1$
for $a\geq -1$. In what follows, we use the notation $\rho, \rho'$, and $\rho''$ for the roots of this polynomial ordered so that $a+1<\rho$, $-2<\rho'<-1$, and $-1<\rho''<0$. Moreover, if $a\geq 7$, then \cite{LP} $$a+1<\rho<a+1+\frac{2}{a},\ \ -1-\frac{1}{a}<\rho'<-1-\frac{1}{2a},\text{ and } -\frac{1}{a+2}<\rho''<-\frac{1}{a+3}.$$  

We denote the conjugates of $\alpha\in K$ as $\alpha'$ and $\alpha''$;
these correspond to the embeddings given by $\rho\mapsto\rho'$ and $\rho\mapsto\rho''$, respectively. Moreover, the simplest cubic fields are totally real and Galois, and $\O_K=\Z[\rho]$ for infinitely many cases of $a$. Specifically,  $\O_K=\Z[\rho]$ if the square root $a^2+3a+9$ of the discriminant $\Delta=(a^2+3a+9)^2$ is squarefree; this condition is not necessary since for example for $a=0$, we have the discriminant equal to $9^2$ but at the same time $\O_K=\Z[\rho]$. In this paper, we deal primarily with this monogenic case.

Furthermore, we will use the fact that the group of units of $\Q(\rho)$ is generated by the pair $\rho$ and $\rho'$ \cite{God, Sh}. The simplest cubic fields contain units of all signs, and so every totally positive unit is a square in $\O_K$ \cite[p. 111, Corollary 3]{N}.

We use the common analytic number theory notation:
For two functions $f(x), g(x)$ we write $f\ll g$ (or $g\gg f$) if $|f(x)|<Cg(x)$ for some constant $C$ and all sufficiently large $x$, and $f\asymp g$ if $f\ll g$ and $f\gg g$.
Finally, $f\sim g$ means that $\lim_{x\rightarrow\infty} f(x)/g(x)=1$.

\section{Elements of trace one in quadratic fields} \label{sec:quadr}

In this Section, we will focus on the case of quadratic fields for comparison.
Let us start by introducing some additional notation and preliminaries.

For a real quadratic field $K=\Q(\sqrt{D})$ where $D>1$ is a squarefree positive integer, we have full characterization of indecomposable integers. To state it, let
\[
\omega_D=\left\{
\begin{array}{l}
\sqrt{D} \\
\frac{1+\sqrt{D}}{2}
\end{array}
\right.\text{ and }
\xi_D=-\omega'_D=
\left\{
\begin{array}{ll}
\sqrt{D} \qquad\text{if } D\equiv 2,3\;(\text{mod } 4),\\
\frac{\sqrt{D}-1}{2}\,\quad\text{if } D\equiv 1\;(\text{mod } 4).
\end{array}
\right.
\]
Then $\O_K=Z[\omega_D]$. Let $\Delta=\Delta_K$ be the discriminant of $K$, i.e., $\Delta=D$ if $D\equiv 1\pmod 4$ and $\Delta=4D$ otherwise.
For $\alpha\in K$ we denote its conjugate $\alpha'$.

\medskip

Let $\xi_D=[u_0,\overline{u_1,u_2,\ldots,u_s}]$ be the continued fraction of $\xi_D$ and define
\[
\frac{p_i}{q_i}=[u_0,\ldots,u_i]\text{ for coprime positive integers }p_i,q_i,i\geq 0; p_{-1}=1,q_{-1}=0.
\]
Then the algebraic integers of the form $\alpha_i=p_i+q_i\omega_D$ with $i\geq -1$ are called the \emph{convergents} (by a slight, but convenient, abuse of the usual terminology where convergents are the fractions $p_i/q_i$). We will also consider the \emph{semiconvergents}, i.e., the elements of the form $\alpha_{i,r}=\alpha_{i}+r\alpha_{i+1}$ with $i\geq -1, 0\leq r< u_{i+2}$. 

All indecomposables in $\Q(\sqrt{D})$ are exactly these elements $\alpha_{i,r},\alpha'_{i,r}$ with $i$ odd, i.e., the totally positive semiconvergents and their conjugates \cites[Theorem 2]{DS}[§16]{Pe}. 
Note that $\alpha_{i,u_{i+2}}=\alpha_{i+2,0}$ is also an indecomposable, so one can allow $r= u_{i+2}$.

Since the quadratic field $K$ is monogenic, the codifferent is
\[
    \O_K^{\vee}=\frac1{\sqrt \Delta}\co_K=
    \left\{
                \begin{array}{ll}
                  \frac{1}{2\sqrt{D}}\Z[\sqrt{D}] \;\,\quad\text{\ \ if } D\equiv 2,3\;(\text{mod } 4),\\
                  \frac{1}{\sqrt{D}}\Z\left[ \frac{1+\sqrt{D}}{2}\right] \quad\text{if } D\equiv 1\;(\text{mod } 4).
                \end{array}
              \right.
\]

Let us now characterize the quadratic indecomposables as trace one elements; we will later apply this result to universal quadratic forms in Subsection \ref{subsec:7.3}.

\begin{proposition}\label{pr:quad}
Let $K=\Q(\sqrt{D})$. Then $\alpha\in\O_K^+$ is indecomposable if and only if $$\min_{\delta\in\O_K^{\vee,+}}\textup{Tr}(\alpha\delta)=1.$$

Further, if $\alpha=\alpha_{i,r}$ with $0\leq r\leq u_{i+2}$, then the choice of $\delta$ depends only on $i$ (and not on $r$).
\end{proposition}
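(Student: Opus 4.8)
The plan is to prove both directions, the forward (``only if'') one being the substantive part. For the converse, observe first that for any $\delta\in\O_K^{\vee,+}$ and $\alpha\in\O_K^+$ the product $\alpha\delta$ is totally positive and $\Tr(\alpha\delta)\in\Z$ (since $\delta\in\O_K^{\vee}$ and $\alpha\in\O_K$), so $\Tr(\alpha\delta)\geq 1$ and hence $\min_{\delta}\Tr(\alpha\delta)\geq 1$ unconditionally. If this minimum equals $1$, then some $\delta$ realizes $\Tr(\alpha\delta)=1$, and the general criterion recalled in Section~\ref{sec:prel} immediately forces $\alpha$ to be indecomposable. Thus everything reduces to showing that every indecomposable $\alpha$ admits a witness $\delta\in\O_K^{\vee,+}$ with $\Tr(\alpha\delta)=1$.

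By the classification of quadratic indecomposables as totally positive semiconvergents, I may assume $\alpha=\alpha_{i,r}=\alpha_i+r\alpha_{i+1}$ with $i$ odd; the conjugate family $\alpha'_{i,r}$ is then handled by applying the nontrivial automorphism of $K$, which preserves $\O_K^{\vee,+}$ (it fixes the fractional ideal $\frac{1}{\sqrt\Delta}\O_K$ and merely swaps the two real embeddings, hence preserves total positivity) as well as the trace. Since $\alpha_{i,r}=\alpha_i+r\alpha_{i+1}$ is linear in $r$, it suffices to find a single $\delta$, depending only on $i$, with $\Tr(\alpha_i\delta)=1$ and $\Tr(\alpha_{i+1}\delta)=0$: then $\Tr(\alpha_{i,r}\delta)=1$ for every $r$ in the range $0\leq r\leq u_{i+2}$, which simultaneously yields the ``only if'' direction and the final ``further'' assertion.

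To produce such a $\delta$ I would first record the continued-fraction identity $\alpha_j\alpha'_{j-1}-\alpha'_j\alpha_{j-1}=(-1)^j\sqrt\Delta$, which follows from $p_jq_{j-1}-p_{j-1}q_j=(-1)^{j-1}$ together with $\omega_D-\omega'_D=\sqrt\Delta$. In particular $\alpha_i\alpha'_{i+1}-\alpha'_i\alpha_{i+1}=(-1)^i\sqrt\Delta=-\sqrt\Delta$ for $i$ odd, so the determinant $p_iq_{i+1}-p_{i+1}q_i=\pm1$ shows that $\{\alpha_i,\alpha_{i+1}\}$ is a $\Z$-basis of $\O_K$, and its dual basis under the trace pairing lies in $\O_K^{\vee}$. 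A direct computation of that dual basis singles out the candidate $\delta=-\alpha'_{i+1}/\sqrt\Delta$, which by construction satisfies $\Tr(\alpha_i\delta)=1$ and $\Tr(\alpha_{i+1}\delta)=0$ and depends only on $i$.

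The crux is then verifying that this $\delta$ is \emph{totally positive}. Its two embeddings are $-\alpha'_{i+1}/\sqrt\Delta$ and $\alpha_{i+1}/\sqrt\Delta$, so I must check $\alpha_{i+1}>0$ and $\alpha'_{i+1}<0$. The first is immediate since $p_{i+1},q_{i+1}>0$ and $\omega_D>0$. The second is the condition $p_{i+1}/q_{i+1}<\xi_D=-\omega'_D$, equivalently that $\alpha'_{i+1}=q_{i+1}\bigl(p_{i+1}/q_{i+1}-\xi_D\bigr)$ is negative, which is exactly the standard fact that the even-indexed convergents to $\xi_D$ approach it from below, applied with $i+1$ even. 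This sign bookkeeping — aligning the parity of $i$ with the side on which $p_{i+1}/q_{i+1}$ approaches $\xi_D$ — is the one delicate point; the rest is a short determinant computation.
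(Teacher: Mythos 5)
Your proof is correct and is essentially the paper's own argument: the element $\delta=-\alpha'_{i+1}/\sqrt\Delta$ you construct via the dual basis is exactly the paper's choice (there written in coordinates as $\frac{1}{2\sqrt D}(-p_{i+1}+q_{i+1}\sqrt D)$ in the case $D\equiv 2,3\pmod 4$), the cancellation of the $r$-dependence via $\Tr(\alpha_{i+1}\delta)=0$ is the same computation, and the total-positivity check reduces to the same sign fact about even-indexed convergents lying below $\xi_D$. The only difference is presentational — you derive $\delta$ conceptually from the trace-dual basis rather than by an ansatz in coordinates — and you are slightly more complete in spelling out the easy converse direction.
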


\begin{proof}
We need to prove that for every indecomposable integer, there exists $\delta\in\O_K^{\vee,+}$ such that  
$\textup{Tr}(\alpha\delta)=1$. First of all, suppose that $D\equiv 2,3\;(\text{mod }4)$. Then any element of $\O_K^{\vee}$ can be written as
\[
\delta=\frac{1}{2\sqrt{D}}(c+d\sqrt{D})=\frac{1}{2D}(dD+c\sqrt{D})
\]
for some $c,d\in\Z$. Let $\alpha$ be an indecomposable integer of the form $\alpha_{i,r}=p_i+rp_{i+1}+(q_i+rq_{i+1})\sqrt{D}$ for some odd $i$. It can be easily computed that
\[
\text{Tr}(\alpha\delta)=(p_{i}+rp_{i+1})d+c(q_{i}+rq_{i+1})
\]
If we put $d=q_{i+1}$ and $c=-p_{i+1}$, we get
\[
\text{Tr}(\alpha\delta)=-p_{i+1}q_i+p_iq_{i+1}+r(p_{i+1}q_{i+1}-p_{i+1}q_{i+1})=(-1)^{1+i}=1,
\] 
where we used the well-known identity $p_iq_{i-1}-p_{i-1}q_i=(-1)^{i-1}$.

It remains to show that $\delta$ is totally positive. We can rewrite $\delta$ as
\[
\delta=\frac{\sqrt{D}}{2D}(-p_{i+1}+q_{i+1}\sqrt{D}).
\]
Since $p_{i+1}-q_{i+1}\sqrt{D}<0$ for $i$ odd, we get $\delta>0$. Consequently,
\[
\delta'=\frac{-\sqrt{D}}{2D}(-p_{i+1}-q_{i+1}\sqrt{D})=\frac{\sqrt{D}}{2D}(p_{i+1}+q_{i+1}\sqrt{D})>0,
\]
and so $\delta$ is indeed totally positive. 

The case $D\equiv 1\;(\text{mod }4)$ is analogous; for indecomposable integer $\alpha_{i,r}$, we consider the element of the codifferent of the form
\[
\delta=-\sqrt{D}\left(p_{i+1}+q_{i+1}\frac{1-\sqrt{D}}{2}\right),
\]
which is totally positive and gives the desired trace $1$ of the product with $\alpha_{i,r}$.
\end{proof}

\section{Parallelepipeds generated by systems of totally positive units}\label{sec:parall}

Let us now outline a general method for characterizing indecomposables in all fields in a suitable family that we will then apply to the case of the simplest cubic fields.

Consider the diagonal embedding  $\sigma:K\rightarrow \R^d, \alpha\mapsto (\sigma_1(\alpha),\dots,\sigma_d(\alpha))$ into the Minkowski space (where $\sigma_1,\dots,\sigma_d$ are all the real embeddings of $K$), and the totally positive octant $\R^{d,+}=\{(v_1,\dots,v_d)\mid v_i>0\text{ for all }i\}$.

Let us further consider a fundamental domain for the action of 
multiplication by (the images of) totally positive units $\ve\in\co_K^{\times,+}$ on $\R^{d,+}$. 
By Shintani's Unit Theorem \cite[Thm (9.3)]{Ne}, we can choose a polyhedric cone $\mathcal P$ as this fundamental domain.

Recall that a \emph{polyhedric cone} is a finite disjoint union of \emph{simplicial cones}, i.e., subsets of $\R^d$ of the form $\R^+\ell_1+\dots+\R^+\ell_e$, where $\ell_1,\dots,\ell_e$ are linearly independent vectors in $\R^d$. Note that as defined, we are considering open simplicial cones -- e.g., if $e>1$, then $\ell_i\not\in\R^+\ell_1+\dots+\R^+\ell_e$.

In fact, if we choose a system $\ve_1,\dots,\ve_{d-1}$ of totally positive fundamental units (i.e., of generators of the group $\co_K^{\times,+}$), then $\mathcal P$ is the union of simplicial cones that are generated by some of the vectors of the form $\sigma(\prod_{i\in I}\ve_i)$ for subsets $I\subset\{1,\dots, d\}$.

Thus every indecomposable integer $\alpha\in\co_K^+$ can be multiplied by some totally positive unit so that it lies in one of these simplicial cones.

\medskip

To count indecomposables, it will be more natural to work in the embedding obtained by fixing an integral basis $\omega_1,\dots,\omega_d$ for $K$ and defining
\begin{align*}
\tau:K\rightarrow &\ \R^d\\
\sum x_i\omega_i\mapsto &\ (x_1,\dots,x_d).
\end{align*}
Elements of $\co_K$ then map to lattice points in $\Z^d$.

The change between embeddings $\sigma$ and $\tau$ is given by an invertible linear transformation; and so we can naturally consider the polyhedric cone $\mathcal Q$ in the embedding $\tau$ that corresponds to $\mathcal P$ and its decomposition into a disjoint union of simplicial cones.

For short, we will use the following notations for the cone $\mathcal C(\alpha_1,\dots,\alpha_e)=\R^+\tau(\alpha_1)+\dots+\R^+\tau(\alpha_e)$ and for its topological closure $\overline{\mathcal C}(\alpha_1,\dots,\alpha_e)=\R_0^+\tau(\alpha_1)+\dots+\R_0^+\tau(\alpha_e)$.

\medskip

More concretely, let us consider the case of cubic fields, i.e., $d=3$.
For a suitable pair of units $\ve_1,\ve_2$, by \cite[Theorem 1]{ThV} (or \cite[Theorem 2]{Ok}) we can take, e.g., 
\begin{align*}
\mathcal Q&= \mathcal C(1,\ve_1,\ve_2)\sqcup \mathcal C(1,\ve_1,\ve_1\ve_2^{-1})\sqcup\mathcal C(1,\ve_1)\sqcup\mathcal C(1,\ve_2)\sqcup\mathcal C(1,\ve_1\ve_2^{-1})\sqcup\mathcal C(1)\\
&\subset
\overline{\mathcal C}(1,\ve_1,\ve_2)\cup \overline{\mathcal C}(1,\ve_1,\ve_1\ve_2^{-1}),
\end{align*}
where $\sqcup$ denotes disjoint union.
Note that $\mathcal C(1,\ve_1,\ve_2),\mathcal C(1,\ve_1,\ve_1\ve_2^{-1})$ are the two cones of maximal dimension.

Let us now consider an indecomposable $\alpha$ such that $\tau(\alpha)\in\mathcal Q$. It lies in one of the two closed cones, without loss of generality let
$\tau(\alpha)\in\overline{\mathcal C}(1,\ve_1,\ve_2)$. 
If all the coordinates of $\tau(\alpha)$ are greater than the corresponding coordinates of $\tau(\ve_1)$, then $\alpha\succ\ve_1$ and $\alpha$ is not indecomposable. And similarly for the other vertices $\tau(1),\tau(\ve_1)$.
Thus $\tau(\alpha)$ has to lie in the cut-off part of the cone $\overline{\mathcal C}(1,\ve_1,\ve_2)$. 
This part of the cone is in turn contained in the parallelepiped ${\mathcal D}(1,\ve_1,\ve_2)=[0,1]\tau(1)+[0,1]\tau(\ve_1)+[0,1]\tau(\ve_2)$.

To sum up this part, we have seen that, up to multiplication by totally positive units, the image each indecomposable $\alpha$ lies in one of the two parallelepipeds ${\mathcal D}(1,\ve_1,\ve_2)$, ${\mathcal D}(1,\ve_1,\ve_1\ve_2^{-1})$. As $\tau$ maps $\co_K$ to $\Z^d$, this image $\tau(\alpha)$ is a lattice point in the parallelepiped.

\medskip

Thus to count (or characterize) indecomposables, we just need to count lattice points in parallelepipeds. Although Pick's Formula relating the volume of a convex body to the number of lattice points inside does not hold in dimension $>2$ in general, it luckily does hold for parallelepipeds. The validity of the formula in this case was one the reasons (besides the fact that it is simply easier to work with parallelepipeds) why we enlarged the cut-off of the cone to the whole parallelepiped.

\begin{proposition}
	Let $\ell_1,\dots,\ell_e\in\Z^e$ be linearly independent vectors and let 
	${\mathcal P}(\ell_1,\dots,\ell_e)=[0,1)\ell_1+\dots+[0,1)\ell_e$ be the corresponding (semi-open) parallelepiped. Then the number of lattice points in it equals its volume, i.e., 
	$$\#\left({\mathcal P}(\ell_1,\dots,\ell_e)\cap\Z^d\right)=\mathrm{vol }({\mathcal P}(\ell_1,\dots,\ell_e))
	=|\det \left((\ell_{ij})_{1\leq i,j\leq e}\right)|,$$
	where $\ell_i=(\ell_{i1},\dots, \ell_{ie})$.
\end{proposition}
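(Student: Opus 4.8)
The plan is to realize the semi-open parallelepiped $\mathcal{P}(\ell_1,\dots,\ell_e)$ as a fundamental domain for the full-rank sublattice $\Lambda=\Z\ell_1+\dots+\Z\ell_e$ of $\Z^e$, and then to read off both the volume and the lattice-point count from the index $[\Z^e:\Lambda]$. Writing $M$ for the $e\times e$ integer matrix whose rows are $\ell_1,\dots,\ell_e$, both sides of the asserted equality are separately equal to $|\det M|$ by standard facts, so the real content is to show that the lattice-point count agrees with this common value.

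First I would record the two standard reductions. The volume of the parallelepiped spanned by the $\ell_i$ is $|\det M|$ by the change-of-variables formula, and the half-open convention is irrelevant here, since the omitted faces are lower-dimensional and hence have $e$-dimensional measure zero. On the other hand, the index satisfies $[\Z^e:\Lambda]=|\det M|$, which is the usual consequence of Smith normal form (or of multiplicativity of the determinant for the change of basis expressing the $\ell_i$ in terms of the standard basis). Thus it suffices to prove that $\#\left(\mathcal{P}(\ell_1,\dots,\ell_e)\cap\Z^e\right)=[\Z^e:\Lambda]$.

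The key step is a bijection between $\mathcal{P}\cap\Z^e$ and the quotient $\Z^e/\Lambda$. Since $\ell_1,\dots,\ell_e$ form an $\R$-basis of $\R^e$, every $v\in\R^e$ has a unique expansion $v=\sum_i c_i\ell_i$ with $c_i\in\R$; setting $\lambda=\sum_i\lfloor c_i\rfloor\ell_i\in\Lambda$ and $p=\sum_i\{c_i\}\ell_i$, where $\{\cdot\}$ denotes the fractional part, gives $v=\lambda+p$ with $p\in\mathcal{P}$, and this decomposition is unique by linear independence. In particular each point of $\R^e$ is $\Lambda$-equivalent to exactly one point of $\mathcal{P}$, so $\mathcal{P}$ is a fundamental domain. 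Now if $v\in\Z^e$ then $p=v-\lambda\in\Z^e$ as well, because $\lambda\in\Lambda\subseteq\Z^e$; hence $v\mapsto p$ maps $\Z^e$ onto $\mathcal{P}\cap\Z^e$ and induces a well-defined bijection $\Z^e/\Lambda\to\mathcal{P}\cap\Z^e$. Therefore $\#\left(\mathcal{P}\cap\Z^e\right)=[\Z^e:\Lambda]=|\det M|$, completing the proof.

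The routine verifications — uniqueness of the $\Lambda$-decomposition and the identity $[\Z^e:\Lambda]=|\det M|$ — are entirely standard, so the only point requiring any care is the injectivity of the induced map $\Z^e/\Lambda\to\mathcal{P}\cap\Z^e$, namely that distinct cosets yield distinct representatives in $\mathcal{P}$; but this is exactly the uniqueness in the fundamental-domain decomposition, so there is no genuine obstacle. The substance of the proposition is simply that the half-open parallelepiped is a clean fundamental domain, which is precisely what licenses the replacement of lattice-point counting by volume computation throughout Section~\ref{sec:parall}.
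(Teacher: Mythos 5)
Your proof is correct and complete. Note, however, that the paper deliberately gives no proof of this Proposition: the authors call it folklore, remark that they do not actually use it (they instead describe the lattice points in their parallelepipeds directly), and refer the reader to an external source. So there is no in-paper argument to compare against; your fundamental-domain argument --- decomposing each $v\in\R^e$ uniquely as $\lambda+p$ with $\lambda\in\Lambda=\Z\ell_1+\dots+\Z\ell_e$ and $p\in\mathcal P$, observing that integrality of $v$ and $\lambda$ forces integrality of $p$, and concluding $\#(\mathcal P\cap\Z^e)=[\Z^e:\Lambda]=|\det M|=\mathrm{vol}(\mathcal P)$ --- is exactly the standard route, and you correctly isolate the one place where the half-open convention $[0,1)$ is essential, namely the uniqueness of the coset representative. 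The only cosmetic point worth flagging is that the statement as printed writes $\mathcal P(\ell_1,\dots,\ell_e)\cap\Z^d$, which should read $\Z^e$; your proof silently (and correctly) works with $\Z^e$ throughout.
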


As we do not use this folklore Proposition in the paper, let us not include its proof and instead refer the reader to \cite{ah}.

\medskip

The general strategy for characterizing all indecomposables should now be clear. We will 
\begin{enumerate}
	\item suitably choose the two parallelepipeds, 
	\item find all the lattice points contained in them with help of the volume formula for their number (although this is not strictly necessary, one can just directly describe all the lattice points), and then
	\item see which of these lattice points in fact correspond to indecomposables.
\end{enumerate}

To carry out this strategy to find candidates for indecomposables in the simplest cubic fields, we will take 
$\ve_1=\rho^2$, $\ve_2=(\rho'')^{-2}$ (note that this pair of units, as well as the pair $\ve_1,\ve_1\ve_2^{-1}$ is \textit{proper} in the sense of \cite[Corollary 2]{ThV}). This corresponds to choosing the following two neighboring parallelepipeds: The first one is generated by units $1$, $\rho^2$, and $1+2\rho+\rho^2=(\rho'')^{-2}$, and the second one by $1$, $\rho^2$, and $-1-a-(a^2+3a+3)\rho+(a+2)\rho^2=(\rho')^{-2}$. All of these elements are obviously totally positive units in the simplest cubic field $\Q(\rho)$, and both parallelepipeds lie in the totally positive octant.

Let us now deal with them separately.

\subsection{Parallelepiped generated by $1$, $\rho^2$, and $1+2\rho+\rho^2$} \label{subsec:par1}   
These units have the coordinates $(1,0,0)$, $(0,0,1)$, and $(1,2,1)$ in the basis $1,\rho,\rho^2$. Now we aim to determine all the coordinates $(m,n,o)\in\Z^3$, for which we can find $t_1,t_2,t_3\in[0,1]$ such that
\[
t_1(1,0,0)+t_2(0,0,1)+t_3(1,2,1)=(m,n,o).
\] 
Considering the 2nd coordinate, we see that $t_3$ must be equal to $0,\frac{1}{2}$, or $1$. For $t_3=\frac{1}{2}$, we get the algebraic integer $1+\rho+\rho^2$. For all the other choices, we obtain either the considered units, or some sum of them, i.e., a decomposable integer. Thus the only non-unit candidate for an indecomposable  from this parallelepiped is $1+\rho+\rho^2$. 

\subsection{Parallelepiped generated by $1$, $\rho^2$, and $-1-a-(a^2+3a+3)\rho+(a+2)\rho^2$} \label{subsec:par2}

Next we will focus on the neighboring parallelepiped. For $a=-1$, we do not get any other elements except for units and their sums, and so assume $a\geq 0$. In that case, we have the equation
\[
t_1(1,0,0)+t_2(0,0,1)+t_3(-1-a,-a^2-3a-3,a+2)=(m,n,o)\in\Z^3.
\]
Obviously, $t_3$ is of the form $t_3=\frac{w}{a^2+3a+3}$ for some $w\in\Z$, $0\leq w\leq a^2+3a+3$; i.e., $n=-w$. For $w=0$ and $w=a^2+3a+3$, we obtain only the original units and their sums, thus we can assume $1\leq w \leq a^2+3a+2=(a+1)(a+2)$. 
We will further divide this interval into the disjoint union of subintervals 
$v(a+2)+1\leq  w \leq (v+1)(a+2)$ indexed by $0\leq v \leq a$.

Next we have
\[
t_1=m+w\frac{a+1}{a^2+3a+3}=m+w\frac{1}{a+2+\frac{1}{a+1}}\in [0,1],
\]
and so for $v(a+2)+1\leq  w \leq (v+1)(a+2)$ with $0\leq v \leq a$, we necessarily have $m=-v$. Similarly, for $t_2$ we have
\[
t_2=o-w\frac{a+2}{a^2+3a+3}=o-w\frac{1}{a+1+\frac{1}{a+2}}\in [0,1].
\]
Hence for fixed $v$, we have $o=v+1$ if $v(a+2)+1\leq w\leq (v+1)(a+1)$, and $o=v+2$ if $(v+1)(a+1)+1\leq w\leq (v+1)(a+2)$. In the second case, these elements are decomposable:

\begin{lemma}
The elements of the form $-v-w\rho+(v+2)\rho^2$ where $0\leq v \leq a$ and $(v+1)(a+1)+1\leq w\leq (v+1)(a+2)$ are decomposable.
\end{lemma}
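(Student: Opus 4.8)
The plan is to decompose each such $\alpha=-v-w\rho+(v+2)\rho^2$ as a sum of two elements having the \emph{same} shape as the candidate indecomposables found in Subsection~\ref{subsec:par2}, namely elements of the form $-s-t\rho+(s+1)\rho^2$ with $0\le s\le a$ and $s(a+2)+1\le t\le (s+1)(a+1)$. The key observation is that every such element is a lattice point of the parallelepiped ${\mathcal D}(1,\rho^2,(\rho')^{-2})$ and hence, being a nonzero nonnegative combination of the totally positive units $1,\rho^2,(\rho')^{-2}$, is itself totally positive. So if I can exhibit $\alpha$ as a sum of two of them, I immediately get a decomposition into two elements of $\co_K^+$.

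Concretely, I look for nonnegative integers $v_1,v_2,w_1,w_2$ with $v_1+v_2=v$, $w_1+w_2=w$, and $v_i(a+2)+1\le w_i\le (v_i+1)(a+1)$, so that $\alpha=\bigl(-v_1-w_1\rho+(v_1+1)\rho^2\bigr)+\bigl(-v_2-w_2\rho+(v_2+1)\rho^2\bigr)$. The $\rho^2$-coordinate then matches automatically, since $(v_1+1)+(v_2+1)=v+2$, and the remaining two coordinates match by the choices $v_1+v_2=v$ and $w_1+w_2=w$. I first fix any admissible split of $v$, for instance $v_1=0$ and $v_2=v$; since $0\le v\le a$, both $v_i$ lie in $[0,a]$, so each range $[v_i(a+2)+1,(v_i+1)(a+1)]$ is a nonempty interval of consecutive integers. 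Because these are integer intervals, the set of attainable sums $w_1+w_2$ is exactly the integer interval $[v(a+2)+2,\,(v+2)(a+1)]$ (note that the sum of the endpoints is independent of how $v$ is split). It then remains only to check the containment $[(v+1)(a+1)+1,\,(v+1)(a+2)]\subseteq[v(a+2)+2,\,(v+2)(a+1)]$, and both resulting inequalities reduce to $v\le a$, which holds by hypothesis. Hence suitable $w_1,w_2$ exist.

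With such a split in hand, the two summands are totally positive integers by the observation of the first paragraph, and $\alpha$ is their sum, so $\alpha$ is decomposable. The step I expect to require the most care is not the interval arithmetic (which is routine) but rather guaranteeing that each summand is genuinely totally positive: this is exactly why I insist on the tight ranges $s(a+2)+1\le t\le (s+1)(a+1)$, which place each summand inside the totally positive parallelepiped ${\mathcal D}(1,\rho^2,(\rho')^{-2})$ rather than merely prescribing its coordinates. One should also note in passing that each summand is nonzero (as $w_i\ge 1$), so that the decomposition is into two honest elements of $\co_K^+$.
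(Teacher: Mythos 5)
Your proof is correct and follows essentially the same route as the paper: the element is split into two totally positive candidates from the parallelepiped of Subsection \ref{subsec:par2}, one with first index $0$ and one with first index $v$. The paper simply writes down one explicit such split, namely $\bigl(-v-(a+1)(v+1)\rho+(v+1)\rho^2\bigr)+\bigl(-(w-(a+1)(v+1))\rho+\rho^2\bigr)$, whereas you establish existence via the (correct) interval containment $[(v+1)(a+1)+1,(v+1)(a+2)]\subseteq[v(a+2)+2,(v+2)(a+1)]$.
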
 

\begin{proof}
Let us decompose the considered elements as
\[
-v-w\rho+(v+2)\rho^2=[-v-(a+1)(v+1)\rho+(v+1)\rho^2]+[-(w-(a+1)(v+1))\rho+\rho^2].
\] 
Both of these summands are some of the elements found in the parallelepiped in this Subsection: In the case of the first one, it is obvious. Considering the second one, we can deduce that
\[
1\leq w-(a+1)(v+1)\leq (v+1)(a+2)-(v+1)(a+1)=v+1\leq a+1,
\]
i.e., we obtain it for $v'=0$. Thus these two summands are totally positive and we found a decomposition of $-v-w\rho+(v+2)\rho^2$ in $\co_K^+$.
\end{proof}

The candidates on indecomposables from this parallelepiped are therefore the elements $-v-w\rho+(v+1)\rho$ where $0\leq v\leq a$ and $v(a+2)+1\leq w\leq (v+1)(a+1)$. 
Let us now prove their indecomposability!
 
\section{Indecomposables in the simplest cubic fields}\label{sec:indeco}

To prove the indecomposability of the elements from Subsections \ref{subsec:par1} and \ref{subsec:par2}, we will use the codifferent. If $\O_K=\Z[\rho]$, we have
\[
\O_K^{\vee}=\frac{1}{3\rho^2-2a\rho-(a+3)}\Z[\rho]=\frac{1}{a^2+3a+9}\left( -4-a+(-1-2a)\rho+2\rho^2\right) \Z[\rho].
\]

\subsection{The triangle of indecomposables}\label{subsec:triangle}
Let us start with the elements from Subsection \ref{subsec:par2}. 

\begin{lemma} \label{lemma:indetria}
If $\O_K=\Z[\rho]$, then the elements of the form $\alpha=-v-w\rho+(v+1)\rho^2$ where $0\leq v\leq a$ and $v(a+2)+1\leq w\leq (v+1)(a+1)$ are indecomposable.

There is an element $\delta\in\co_K^{\vee,+}$ (independent of $\alpha$) such that $\Tr(\delta\alpha)=1$ for each such element $\alpha$.
\end{lemma}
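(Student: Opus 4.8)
The plan is to exhibit a single totally positive codifferent element $\delta$ with $\Tr(\alpha\delta)=1$ for every $\alpha$ in the stated range, and then invoke the criterion recalled in Section \ref{sec:prel}: if $\delta\in\O_K^{\vee,+}$ and $\Tr(\alpha\delta)=1$, then $\alpha$ is indecomposable, since a decomposition $\alpha=\beta+\gamma$ with $\beta,\gamma\in\O_K^{+}$ would force $1=\Tr(\beta\delta)+\Tr(\gamma\delta)\geq 2$ (each summand being an integer because $\delta\in\O_K^{\vee}$, and positive because $\beta\delta,\gamma\delta$ are totally positive). So the entire lemma reduces to producing the right $\delta$ and verifying its two properties; the fact that one and the same $\delta$ works for all $\alpha$ is exactly the second assertion of the lemma.

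To find $\delta$ I would use the dual basis of $1,\rho,\rho^2$ with respect to the trace form. Dividing $f(x)=x^3-ax^2-(a+3)x-1$ by $x-\rho$ gives $f(x)/(x-\rho)=x^2+(\rho-a)x+\tfrac1\rho$, using $\rho'+\rho''=a-\rho$ and $\rho'\rho''=1/\rho$ from Vieta's relations. By the standard formula the dual basis elements are the coefficient quotients $b_j(\rho)/f'(\rho)$, so $\delta_0=\tfrac{1/\rho}{f'(\rho)}$ is dual to $1$ and $\delta_2=\tfrac1{f'(\rho)}$ is dual to $\rho^2$, with $\Tr(\rho^i\delta_j)=\delta_{ij}$. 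The key observation is that the choice $\delta=\delta_0+\delta_2$ ignores the $\rho$-coordinate of $\alpha$ altogether: since $\alpha=-v-w\rho+(v+1)\rho^2$, one computes $\Tr(\alpha\delta)=(-v)\cdot 1+(-w)\cdot 0+(v+1)\cdot 1=1$, independent of both $v$ and $w$. This is precisely why a single $\delta$ serves the whole triangle and why the troublesome $w$-dependence vanishes.

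It then remains to rewrite $\delta$ concretely and to check total positivity. From $\rho(\rho^2-a\rho-(a+3))=1$ we have $1/\rho=\rho^2-a\rho-(a+3)\in\Z[\rho]$, whence $\delta=\frac{1+1/\rho}{f'(\rho)}=\frac{\rho^2-a\rho-(a+2)}{f'(\rho)}$, which manifestly lies in $\O_K^{\vee}=\tfrac1{f'(\rho)}\Z[\rho]$; multiplying against the given expression for $1/f'(\rho)$ puts it in $\tfrac1{a^2+3a+9}\Z[\rho]$ form if one wants it displayed explicitly. For total positivity I would instead use the factored shape $\delta=\frac{1+\rho}{\rho\,f'(\rho)}$ together with $f'(\rho)=(\rho-\rho')(\rho-\rho'')$, and chase signs at the three embeddings from the root locations $\rho'\in(-2,-1)$, $\rho''\in(-1,0)$, $\rho>a+1$: at $\rho$ all three factors are positive; at $\rho'$ both $1+\rho'$ and $\rho'f'(\rho')$ are negative; and at $\rho''$ both $1+\rho''$ and $\rho''f'(\rho'')$ are positive, so every conjugate of $\delta$ is positive.

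The main obstacle is exactly this sign analysis: the naive single-basis candidates are not totally positive, since $f'$ is negative at the middle root $\rho''$ (so $\delta_2''=1/f'(\rho'')<0$), and total positivity is a genuine feature of the combination $\delta_0+\delta_2$ that must be checked conjugate by conjugate rather than inferred from either summand. Once $\delta\in\O_K^{\vee,+}$ and $\Tr(\alpha\delta)=1$ are both in hand, the indecomposability of every $\alpha=-v-w\rho+(v+1)\rho^2$ in the triangle, as well as the existence of the single $\alpha$-independent $\delta$ asserted in the lemma, follow at once.
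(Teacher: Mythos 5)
Your proof is correct, and in fact the element you construct is literally the paper's: $\delta=\frac{\rho^2-a\rho-(a+2)}{f'(\rho)}$ is exactly the $\delta=\frac{1}{a^2+3a+9}(-4-a+(-1-2a)\rho+2\rho^2)(-(a+2)-a\rho+\rho^2)$ used there, and the identity $\Tr((v_1+v_2\rho+v_3\rho^2)\delta)=v_1+v_3$ is the same key computation. What differs is the route to it and the verification of total positivity. You obtain $\delta=\delta_0+\delta_2$ conceptually from the dual basis of $1,\rho,\rho^2$ (which explains \emph{why} the $w$-dependence disappears), and you check $\delta\succ 0$ by chasing signs of $\frac{1+\rho}{\rho f'(\rho)}$ at the three embeddings using the known root locations; the paper instead verifies total positivity by computing the minimal polynomial of $(a^2+3a+9)\delta$, namely $x^3-(a^2+3a+9)x^2+2(a^2+3a+9)x-(a^2+3a+9)$, and applying Descartes' rule of signs. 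Your sign analysis is correct ($f'(\rho)>0$, $f'(\rho')>0$, $f'(\rho'')<0$, and the numerator $1+\rho$ changes sign in step with $\rho f'(\rho)$) and is arguably more transparent, while the paper's method avoids any case analysis at the cost of an opaque polynomial computation. One small omission: the lemma implicitly requires that each $\alpha=-v-w\rho+(v+1)\rho^2$ is itself totally positive (otherwise ``indecomposable'' is vacuous), and the paper disposes of this by one sentence referring back to the parallelepiped construction of Section \ref{sec:parall}, where these elements arise as lattice points in a parallelepiped contained in the totally positive octant; you should add that reference, since the trace-one argument only rules out decompositions and does not by itself establish $\alpha\succ 0$.
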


\begin{proof}
By Section \ref{sec:parall}, all of these elements are totally positive (as they lie in the totally positive octant).

Let us consider the element of the codifferent of the form
\[
\delta=\frac{1}{a^2+3a+9}(-4-a+(-1-2a)\rho+2\rho^2)(-(a+2)-a\rho+\rho^2).
\]
We can easily check that $(-4-a+(-1-2a)\rho+2\rho^2)(-(a+2)-a\rho+\rho^2)$ is a root of the polynomial
$
x^3-(a^2+3a+9)x^2+2(a^2+3a+9)x-(a^2+3a+9).
$
All three roots of this polynomial are positive (e.g., by Descartes sign rule), and so
$\delta\in\O_K^{\vee,+}$.

For $\alpha=v_1+v_2\rho+v_3\rho^2\in\Z[\rho]$, a direct computation shows that
$
\text{Tr}(\alpha\delta)=v_1+v_3.
$
Thus for the totally positive elements $\alpha=-v-w\rho+(v+1)\rho^2$ we have $\Tr(\alpha\delta)=1$, and so they are indecomposable (as explained in the Introduction).
\end{proof}

Denote $\blacktriangle=\blacktriangle(a)$ the ``triangle'' of indecomposables from Lemma \ref{lemma:indetria}, i.e., 
\[
\bt(a)=\{-v-w\rho+(v+1)\rho^2\mid 0\leq v\leq a \text{ and } v(a+2)+1\leq w\leq (v+1)(a+1)\}.
\]

Observe that conjugation corresponds to the rotation of this triangle, more precisely:
Let $$\alpha=-v-w\rho+(v+1)\rho^2=-v-(v(a+2)+1+W)\rho+(v+1)\rho^2=\alpha(v,W)$$ for $W=w-v(a+2)-1$. Then multiplication of the conjugate $\alpha'$ by the totally positive unit $-1-a-(3+3a+a^2)\rho+(2+a)\rho^2$ corresponds to the transformation
\[
T_1(\alpha(v,W))=(\alpha(v,W))'(-1-a-(3+3a+a^2)\rho+(2+a)\rho^2)=\alpha(W,a-v-W).
\] 
Similarly, for $\alpha''$ and the unit $\rho^2$, we get
\[
T_2(\alpha(v,W))=(\alpha(v,W))''\rho^2=\alpha(a-v-W,v).
\]
It is easy to directly check from the inequalities defining the triangle $\bt$ that $T_1(\alpha), T_2(\alpha)\in\bt$ and that they correspond to the rotations around the center. E.g., $T_1$ maps the vertices of $\bt$ as
\[
T_1:\alpha(0,0)\mapsto\alpha(0,a)\mapsto\alpha(a,0)\mapsto\alpha(0,0).
\]
The element $\alpha(v,W)$ is fixed by $T_1$ (or equivalently, $T_2$) if and only if $v=W=\frac{a}{3}$ which is of course possible only if $3\mid a$.

Thus, besides $\alpha$, the triangle $\bt$ contains unit multiples of its conjugates $\alpha'$ and $\alpha''$, 
and these conjugates are not associated with $\alpha$ except when $\alpha$ is the center of the triangle $\alpha=\alpha(a/3,a/3)$. 

Note that we could also include the unit $\rho^2$ in the triangle, as it is obtained by choosing $v=0, w=0$ (that is excluded from Lemma \ref{lemma:indetria}), and similarly with its images $\alpha(-1,a+1),\alpha(a+1,0)$ under $T_1,T_2$.
In particular, these three units all have trace 1 after multiplication by the element $\delta$ from the proof of Lemma \ref{lemma:indetria}.

In Section \ref{sec:small norms} we will need to estimate the norms of indecomposables. First note that for $\alpha(v,W)\in\bt$ we have $0\leq v\leq a$, $0\leq W\leq a-v$, and
\begin{align*}
N(\alpha(v,W)) & =  a^2 v W - a v^2 W - a v W^2 + a^2 v - 2 a v^2 + a v W + a W^2 + v^3 - 
3 v W^2 - W^3\\ 
& + 3 a v + 3 a W - 3 v^2 - 3 v W - 3 W^2 + 2 a + 3.
\end{align*}

However, we will primarily be interested in indecomposables up to conjugation and association, and so we will need to consider only a third of the triangle $\bt$. To formulate this precisely, let  
$a=3A+a_0$ where $a_0\in\{0,1,2\}$. Put
\[
\bt_0(a)=
\begin{cases}
\left\{\alpha(v,W)\mid 0\leq v\leq A\quad \ \ \text{ and } v\leq W\leq 3A+a_0-2v-1\right \}&\text{ if } a_0\in\{1,2\},\\
\{\alpha(v,W)\mid 0\leq v\leq A-1\text{ and } v\leq W\leq 3A+a_0-2v-1\}\cup\{\alpha(A,A)\}&\text{ if } a_0=0.
\end{cases}
\]
As we saw above, it is easy to directly check that for each $\alpha\in\bt$ there is unique $\beta\in\bt_0$ such that $\alpha$ differs from a conjugate of $\beta$ by the multiple of a unit.

\subsection{The exceptional indecomposable} \label{subsec:except}
We now turn to the element $1+\rho+\rho^2$ from Subsection \ref{subsec:par1} and prove its indecomposability.

\begin{lemma} \label{lemma:indespec}
Assume that $\O_K=\Z[\rho]$. Then the element $1+\rho+\rho^2$ is indecomposable. Moreover, $\min_{\delta\in\O_K^{\vee,+}}\textup{Tr}((1+\rho+\rho^2)\delta)=2$. 
\end{lemma}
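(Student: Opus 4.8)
The plan is to prove the two assertions of Lemma~\ref{lemma:indespec} separately, since indecomposability does not follow here from a trace-one argument. For the indecomposability of $\eta=1+\rho+\rho^2$, I would first compute its norm. A direct calculation gives $N(\eta)=N(1+\rho+\rho^2)$, which I expect to be a small number (independent of $a$, likely equal to $3$ or similar, since $1+\rho+\rho^2=\frac{\rho^3-1}{\rho-1}$ may be exploited). If $\eta$ were decomposable, $\eta=\beta+\gamma$ with $\beta,\gamma\in\co_K^+$, then comparing traces gives $\Tr(\eta)=\Tr(\beta)+\Tr(\gamma)$ and one can try to bound the traces and norms of the totally positive summands from below. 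Concretely, any nonzero totally positive integer has all conjugates positive, so by AM--GM its trace is at least $3N(\cdot)^{1/3}\geq 3$; combined with $\Tr(\eta)=a^2+2a+3$ (computed from the power sums of the roots) this alone is not restrictive. The cleaner route is to observe that $\eta$ sits in the parallelepiped $\mathcal D(1,\rho^2,(\rho'')^{-2})$ at the special point $t_3=\tfrac12$ (Subsection~\ref{subsec:par1}), so any decomposition $\eta=\beta+\gamma$ into totally positive integers would, after translating by the vertices, force $\beta,\gamma$ to be lattice points strictly inside the half-open parallelepiped region — but Subsection~\ref{subsec:par1} already showed the only non-unit lattice point there is $\eta$ itself, and the units dominate $\eta$ in at most a proper subset of coordinates; I would make this precise by checking directly that no two totally positive integers $\beta,\gamma\prec\eta$ (coordinatewise below $\eta$ in the $\sigma$-embedding) sum to $\eta$.

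For the trace computation $\min_{\delta\in\co_K^{\vee,+}}\Tr(\eta\delta)=2$, I would split into the upper and lower bounds. For the upper bound it suffices to exhibit one explicit $\delta\in\co_K^{\vee,+}$ with $\Tr(\eta\delta)=2$; I would search among the natural candidates, e.g. $\delta=\frac{1}{a^2+3a+9}(-4-a+(-1-2a)\rho+2\rho^2)\cdot u$ for a suitable totally positive integer multiplier $u$, mirroring the construction in Lemma~\ref{lemma:indetria} where the multiplier was $-(a+2)-a\rho+\rho^2$. Using the formula $\Tr(\alpha\delta)=v_1+v_3$ for the basic $\delta$ from that proof (applied to $\alpha=\eta$, whose coordinates are $(1,1,1)$) one already gets $\Tr(\eta\delta)=1+1=2$, provided that $\delta$ is totally positive — which it is, by the same Descartes-sign-rule argument. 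So the upper bound is essentially immediate from the earlier work.

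The genuine content is the lower bound: showing $\Tr(\eta\delta)\geq 2$, i.e.\ $\neq 1$, for every totally positive $\delta$ in the codifferent. Equivalently, I must show there is \emph{no} $\delta\in\co_K^{\vee,+}$ with $\Tr(\eta\delta)=1$. Writing a general codifferent element as $\delta=\frac{1}{a^2+3a+9}(-4-a+(-1-2a)\rho+2\rho^2)(x+y\rho+z\rho^2)$ with $x,y,z\in\Z$, the condition $\Tr(\eta\delta)=1$ becomes a single explicit linear Diophantine equation in $x,y,z$ (since $\Tr$ is $\Z$-linear and $\eta$ is fixed), and total positivity of $\delta$ becomes three strict inequalities $\sigma_i(\delta)>0$. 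The plan is to parametrize the integer solutions of the linear trace equation (a rank-one condition, so a two-parameter family) and then prove that the three positivity inequalities are jointly infeasible over this lattice. I expect this to be the main obstacle: one must show a two-dimensional affine lattice misses the open totally-positive cone entirely. I would handle it by computing the three conjugate values $\sigma_i(\delta)$ as explicit linear forms in the two free parameters, using the approximations $a+1<\rho<a+1+\tfrac2a$, $-1-\tfrac1a<\rho'<-1-\tfrac1{2a}$, $-\tfrac1{a+2}<\rho''<-\tfrac1{a+3}$ from Section~\ref{sec:prel} to pin down the signs, and then exhibiting that the requirement $\delta,\delta',\delta''>0$ together with the integrality/trace constraint forces a contradiction (for instance, that $\Tr(\eta\delta)\geq 2$ already follows because $\eta$ itself is not a unit and its three conjugates are too large in the relevant directions). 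An alternative, possibly cleaner, formulation of the lower bound is to argue indirectly: if some $\delta$ gave $\Tr(\eta\delta)=1$ then $\eta$ would be a ``vertex'' detectable by a trace-one functional, but one can show every such functional is already saturated by one of the genuine indecomposables of $\bt$ adjacent to $\eta$, leaving no room for $\eta$ itself — making precise this ``no trace-one witness'' phenomenon is exactly the geometric heart of the claim and the reason this indecomposable is called exceptional.
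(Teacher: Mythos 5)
Your proposal gets the upper bound right and for the same reason as the paper: the $\delta$ constructed in Lemma~\ref{lemma:indetria} satisfies $\Tr(\alpha\delta)=v_1+v_3$ for $\alpha=v_1+v_2\rho+v_3\rho^2$, so $\Tr(\eta\delta)=2$ for $\eta:=1+\rho+\rho^2$. But both substantive halves of the lemma are left with genuine gaps. For indecomposability, your ``cleaner route'' rests on a false reduction: a decomposition $\eta=\beta+\gamma$ does \emph{not} force $\beta,\gamma$ to be lattice points of the parallelepiped. Section~\ref{sec:parall} locates indecomposables only \emph{up to multiplication by totally positive units}, so the summands could be unit multiples of elements of either parallelepiped, and you would have to exclude sums such as $\ve_1\mu_1+\ve_2\mu_2$ over all totally positive units $\ve_i$. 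Your fallback --- ``check directly that no two totally positive integers $\beta,\gamma\prec\eta$ sum to $\eta$'' --- restates the problem rather than solving it, and is not visibly a finite check uniform in $a$. (Your opening guess that $N(\eta)$ is small and independent of $a$ is also wrong: $N(1+\rho+\rho^2)=a^2+3a+9\sim\Delta^{1/2}$.) The idea you are missing is that the trace-two identity you prove in your second paragraph is exactly what drives the indecomposability: since $\Tr(\beta\delta)\geq 1$ for every $\beta\in\co_K^+$ and $\delta\in\co_K^{\vee,+}$, the equation $2=\Tr(\eta\delta)=\Tr(\beta\delta)+\Tr(\gamma\delta)$ forces both summands to satisfy $\Tr(\cdot\,\delta)=1$, i.e.\ $v_1+v_3=1$, hence to have the form $-v-w\rho+(v+1)\rho^2$; comparing constant coefficients gives $v_\beta+v_\gamma=-1$, so one summand has $v\leq -1$, total positivity then forces that summand to be $1$, and $\eta-1=\rho(1+\rho)$ is not totally positive because $-1<\rho''<0$. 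So the two assertions should not be handled ``separately'': the first follows from the machinery of the second.

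For the lower bound $\min_{\delta}\Tr(\eta\delta)\geq 2$, your outline (parametrize the trace-one affine sublattice of $\co_K^{\vee}$ and show it misses the open totally positive cone) is indeed the paper's strategy, but that infeasibility is the entire content and you do not establish it; your parenthetical heuristic that ``$\eta$ is not a unit and its conjugates are too large'' cannot be the reason, since every element of the triangle $\bt$ is a non-unit with a conjugate of size $\asymp a^2$ and nevertheless admits a trace-one witness. The paper closes the gap with integer, not floating-point, arithmetic: writing $\delta_1=\frac{1}{a^2+3a+9}(-4-a+(-1-2a)\rho+2\rho^2)(c-k\rho-l\rho^2)$ and using $\Tr(\eta\delta_1)=c-(1+a)k-(4+2a+a^2)l=1$ to eliminate $c$, one computes $\Tr(\delta_1)=-l$, so total positivity forces $l<0$; and the positivity of the second elementary symmetric function of $\psi=(a^2+3a+9)\delta_1\eta$ forces $k^2+k(l+2al)+l(1+(1+a+a^2)l)<0$, which has no solutions with $k\in\Z$ and $l<0$. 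Your proposed substitute (sign analysis via the numerical bounds on $\rho,\rho',\rho''$ over a two-parameter family) might be made to work but is not carried out, so as written the lower bound is not proved.
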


\begin{proof}
Recall that in Section \ref{sec:parall} we saw that $1+\rho+\rho^2$ is totally positive.

If we multiply $1+\rho+\rho^2$ by $\delta$, the element of codifferent from the proof of Lemma \ref{lemma:indetria}, we get an element of trace $2$. Thus, to prove that $1+\rho+\rho^2$ is indecomposable, we have to show that this element cannot be written as the sum of two totally positive elements that have trace 1 after multiplication by this $\delta$. We saw in the proof of Lemma \ref{lemma:indetria} that such elements are of the form $-v-w\rho+(v+1)\rho^2$. At least one of these summands must have $v\leq -1$, 
but then one directly shows that such an element is totally positive only if $v=-1,w=0$, when the element equals $1$. 
However, since $-1<\rho''<0$, the difference $(1+\rho+\rho^2)-1$ is not totally positive. Hence $1+\rho+\rho^2$ is indeed indecomposable. 

Let us now prove the second part of the statement. Let
\[
\delta_1=\frac{1}{a^2+3a+9}(-4-a+(-1-2a)\rho+2\rho^2)(c-k\rho-l\rho^2)
\]
for some $c,k,l\in\Z$. We have
\[
\text{Tr}((1+\rho+\rho^2)\delta_1)=c-(1+a)k-(4+2a+a^2)l,
\] 
and so if this trace equals $1$, then $c=1+k(1+a)+l(4+2a+a^2)$.

Assume now for contradiction that $\delta_1$ is totally positive. We have
\[
\text{Tr}(\delta_1)=-l,
\]
and so $l<0$. Moreover, for $\psi=(a^2+3a+9)\delta_1(1+\rho+\rho^2)\succ 0$ we have
\[
\psi\psi'+\psi'\psi''+\psi''\psi=-(a^2+3a+9)^2 (k^2 + k (l + 2 a l) + l (1 + (1 + a + a^2) l))>0.
\] 
However, this is not possible for any $l<0$ and $k\in\Z$, and so $\delta_1$ is not totally positive, completing the proof.
\end{proof}

Thus $1+\rho+\rho^2$ is qualitatively different from the indecomposables in $\bt$. In particular, this element indicates that  $\min_{\delta\in\O_K^{\vee,+}}\textup{Tr}(\alpha\delta)=1$ does not hold for all indecomposables $\alpha$ in cubic number fields. 
We can also note that the norm of $1+\rho+\rho^2$ is the square root of the discriminant of $\Q(\rho)$, i.e., ${N}(1+\rho+\rho^2)=a^2+3a+9$, and so this element is divisible only by ramified prime ideals.

\subsection{Characterization of indecomposables}

Theorem \ref{thm:main} now immediately follows:

\begin{proof}[Proof of Theorem $\ref{thm:main}$]
In Subsections \ref{subsec:par1} and \ref{subsec:par2} we showed that the only non-unit candidates for indecomposables are $1+\rho+\rho^2$ and $-v-w\rho+(v+1)\rho^2$ where $0\leq v\leq a$ and $v(a+2)+1\leq w\leq (v+1)(a+1)$. Their indecomposability was proved in Lemmas \ref{lemma:indespec} and \ref{lemma:indetria}.
\end{proof}

We can also summarize our results about $\min_{\delta\in\O_K^{\vee,+}}\textup{Tr}(\alpha\delta)$ for an indecomposable $\alpha$ as follows.

\begin{cor} \label{cor:mintracesimpl}
Assume that  $\O_K=\Z[\rho]$ and let $\alpha$ be an indecomposable. Then \[\min_{\delta\in\O_K^{\vee,+}}\textup{Tr}(\alpha\delta)\leq 2.\] 
\end{cor}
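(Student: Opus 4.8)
The plan is to reduce the statement to the three explicit types of indecomposables furnished by Theorem \ref{thm:main} and then simply quote the minimal-trace computations already carried out.

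First I would record the structural fact that the quantity $\min_{\delta\in\O_K^{\vee,+}}\Tr(\alpha\delta)$ depends only on the class of $\alpha$ up to multiplication by a totally positive unit. Indeed, if $\varepsilon\in\co_K^{\times,+}$, then $\varepsilon^{-1}$ is again a totally positive unit, and since $\O_K^{\vee}$ is an $\O_K$-module, the map $\delta\mapsto\varepsilon^{-1}\delta$ is a bijection of $\O_K^{\vee,+}$ onto itself; as $\Tr\bigl((\varepsilon\alpha)(\varepsilon^{-1}\delta)\bigr)=\Tr(\alpha\delta)$, the two minima agree. Hence it suffices to bound the minimal trace on one representative of each unit-orbit of indecomposables.

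By Theorem \ref{thm:main} there are exactly three such orbits, represented by $1$, by $1+\rho+\rho^2$, and by any element $\alpha=-v-w\rho+(v+1)\rho^2$ of the triangle $\bt$. For the triangle elements, Lemma \ref{lemma:indetria} already exhibits a single $\delta\in\O_K^{\vee,+}$ with $\Tr(\alpha\delta)=1$, so the minimal trace is $1$. For the representative $1$, I would reuse this same $\delta$: the trace formula from the proof of Lemma \ref{lemma:indetria}, namely $\Tr\bigl((v_1+v_2\rho+v_3\rho^2)\delta\bigr)=v_1+v_3$, gives $\Tr(1\cdot\delta)=1$. Finally, for $1+\rho+\rho^2$, Lemma \ref{lemma:indespec} shows the minimal trace equals $2$. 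In every case the minimum is at most $2$, which is the claim.

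Since all the genuine computations are inherited from the preceding lemmas, there is no real obstacle here; the only point requiring care is the invariance argument, and in particular the verification that $\delta\mapsto\varepsilon^{-1}\delta$ preserves both membership in $\O_K^{\vee}$ and total positivity. This is immediate from $\O_K^{\vee}$ being stable under multiplication by $\co_K$ together with $\varepsilon^{-1}\in\co_K^{\times,+}$.
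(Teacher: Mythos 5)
Your proposal is correct and follows essentially the same route as the paper, which obtains the corollary by combining Theorem \ref{thm:main} with Lemmas \ref{lemma:indetria} and \ref{lemma:indespec} (and the trace formula $\Tr\bigl((v_1+v_2\rho+v_3\rho^2)\delta\bigr)=v_1+v_3$ for the unit $1$). Your explicit verification that the minimal trace is invariant under multiplication by totally positive units is a point the paper leaves implicit, but it is exactly the right justification for reducing to the three orbit representatives.
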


Note that the converse implication is not true (at all!) since for example the sum of any two elements from Lemma \ref{lemma:indetria} has this minimal trace also equal to $2$, but is obviously decomposable. Moreover, this Corollary \ref{cor:mintracesimpl} does not hold for general totally real cubic field; we found examples of fields and indecomposable integers for which this minimum is equal to $3$ -- see Subsection \ref{subsec:trace3}.

\section{Number of elements of small norm}\label{sec:small norms}

Let us now use our knowledge of indecomposables to count the primitive elements of small norm up to multiplication by units or, equivalently, such principal ideals, i.e., to prove Theorem~\ref{th count}.
As discussed in the Introduction, there are surprisingly many such small norms!

\medskip

However, let us start by comparing Theorem \ref{th count} to the situation in families of real quadratic fields: 
A natural general class of their families are ``continued fraction families'' \cite{DK} that are obtained by fixing the period length $s$ and all the coefficients $u_1,u_2,\ldots,u_{s-1}$ in the continued fraction expansion
$\xi_D=[u_0,\overline{u_1,u_2,\ldots,u_s}]$.
If the coefficients $u_1,\dots, u_{s-1}$ satisfy certain mild parity condition, then
all the possible $D$s are given by the values of a quadratic polynomial $q(t)$ and there are infinitely many such $D$s that are squarefree \cite{Fr}.
These families generalize most of the well-known 1-parameter families of real quadratic fields such as Chowla's $D=4t^2+1$ or Yokoi's $D=t^2+4$.

In a continued fraction family, the fundamental unit $\alpha_{s-1}$ is  quite small, and one can prove sharp results on the distribution of class numbers \cite{DK,DL}: very roughly, they grow as $\Delta^{1/2}\asymp t$, where $\Delta\asymp t^2$ is the discriminant of the field. 

As we have seen before, the indecomposables in real quadratic fields are known explicitly and, in fact, the only primitive elements of norm $<\sqrt\Delta /2$ are the convergents $\alpha_i,\alpha'_i$ \cite[Proposition 6]{BK}. 
There are thus $\leq 2s$ such elements up to multiplication by units; note that the period length $s$ is constant through the family.

Therefore we see that in a continued fraction family, there are $\asymp 1$ primitive principal ideals of norm $<\sqrt\Delta /2$, which is roughly what we would expect from the prediction that there should be $\Delta^{1/2}/h$ such ideals.

At the moment, we do not know the source of this tantalizing discrepancy between quadratic and cubic fields; nor do we know to what extent it persists in other families in degrees $\geq 3$.

\medskip

Returning to the proof of Theorem \ref{th count},
let us first explain its basic idea before proving the necessary estimates in a series of Lemmas:

The simplest cubic field $K$ has units of all signatures, and so each principal ideal has a totally positive generator $\beta\in\co_K^+$. This element decomposes as a sum of indecomposables $\beta=\sum \alpha_i$. Moreover, we have the following easy consequence of H\" older inequality:

\begin{lemma}[{\cite[Lemma 2.1]{KY}}]\label{lemma:norm_bound}
For all $\alpha_1,\dots,\alpha_k\in \co_K^+$ we have \[{N}\left( \sum_{i=1}^k \alpha_i\right) ^{1/d}\ge \sum_{i=1}^k{N}(\alpha_i)^{1/d}.\]
\end{lemma}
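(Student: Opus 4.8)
The plan is to reduce the claim to an elementary inequality about positive real numbers and then apply the arithmetic--geometric mean inequality coordinate-wise. Writing $\sigma_1,\dots,\sigma_d$ for the real embeddings of $K$, total positivity of each $\alpha_i\in\co_K^+$ guarantees that $a_{ij}:=\sigma_j(\alpha_i)>0$ for all $i,j$. Since the norm is multiplicative over the embeddings, ${N}(\alpha_i)=\prod_{j=1}^d a_{ij}$ and ${N}(\sum_i\alpha_i)=\prod_{j=1}^d\big(\sum_i a_{ij}\big)$, so the assertion becomes the purely real statement
\[
\Big(\prod_{j=1}^d\sum_{i=1}^k a_{ij}\Big)^{1/d}\ \ge\ \sum_{i=1}^k\Big(\prod_{j=1}^d a_{ij}\Big)^{1/d}
\]
for arbitrary positive reals $a_{ij}$; this is the superadditivity (Mahler/Minkowski-type) inequality for the geometric mean.

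First I would set $S_j=\sum_{i=1}^k a_{ij}>0$ and divide both sides by the left-hand side, reducing the goal to $\sum_{i=1}^k\prod_{j=1}^d(a_{ij}/S_j)^{1/d}\le 1$. Then, for each fixed $i$, I would bound the product of $d$-th roots by the arithmetic mean via AM--GM:
\[
\prod_{j=1}^d\Big(\frac{a_{ij}}{S_j}\Big)^{1/d}\ \le\ \frac1d\sum_{j=1}^d\frac{a_{ij}}{S_j}.
\]
Summing over $i$ and interchanging the order of summation gives
\[
\sum_{i=1}^k\prod_{j=1}^d\Big(\frac{a_{ij}}{S_j}\Big)^{1/d}\ \le\ \frac1d\sum_{j=1}^d\frac{1}{S_j}\sum_{i=1}^k a_{ij}\ =\ \frac1d\sum_{j=1}^d 1\ =\ 1,
\]
since $\sum_i a_{ij}=S_j$ for each $j$. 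This is exactly the normalized form of the desired inequality, so the claim follows upon multiplying back through.

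There is essentially no hard step here: the only point requiring care is that total positivity of the $\alpha_i$ is precisely what makes every factor $a_{ij}$ strictly positive, so that the $d$-th roots and AM--GM are legitimately applicable. Once the inequality is cast in the normalized form, the single application of AM--GM per index $i$ finishes everything without any induction on $k$. Alternatively one could prove the $k=2$ case first in the same way and induct on $k$, or phrase the normalized estimate as a direct instance of H\"older's inequality with the $d$ exponents all equal to $d$, as the cited reference suggests; all three routes rest on the same convexity input.
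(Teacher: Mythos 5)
Your proof is correct, and it matches the route the paper indicates: the paper cites this as an easy consequence of H\"older's inequality from \cite[Lemma 2.1]{KY}, and your normalized AM--GM argument is exactly the standard proof of that superadditivity (Mahler/Minkowski) inequality, as you yourself note at the end. Nothing further is needed.
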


In particular, if  $\beta=\sum\alpha_i$, then $N(\beta)\geq N(\alpha_i)$ for each $i$. Thus as the first step
we will study indecomposables of norm $\leq a^2$, and then we will consider which of their sums still yield elements of small norm.

Note that this strategy is fairly robust: it does not require the monogenicity of $\co_K$ nor the existence of units of all signatures. It is ``only'' necessary to know the indecomposable elements of all the semirings $\co_K^{\sigma}$ of elements of signature $\sigma$ for all signatures $\sigma$; note that Lemma \ref{lemma:norm_bound} holds for elements of fixed signature $\sigma$ with essentially the same proof, one only has to take the absolute values of the norms in the statement.

\subsection{Sizes of units} In this Subsection, we derive several properties of (totally positive) units of the simplest cubic fields that we will soon need. 
The first of them tells us that at least one conjugate of each unit except for $1$ is large.

\begin{lemma} \label{lemma:units>a^2}
	Let $a\geq 7$ and let $\varepsilon$ be a unit such that $|\varepsilon|,|\varepsilon'|,|\varepsilon''|<a$. Then $\varepsilon=1$.
\end{lemma}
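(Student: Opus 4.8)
The plan is to use the product structure of units. Since $\varepsilon$ is a unit in $\co_K$, we have $N(\varepsilon)=\varepsilon\cdot\varepsilon'\cdot\varepsilon''=\pm 1$. If all three conjugates satisfy $|\varepsilon|,|\varepsilon'|,|\varepsilon''|<a$, then in particular they are bounded, and the goal is to leverage the fact that the fundamental units of the simplest cubic field (generated by $\rho,\rho'$) all have a \emph{large} conjugate, since $\rho>a+1$. First I would recall from the explicit bounds in Section \ref{sec:prel} that for $a\geq 7$ the root $\rho$ satisfies $\rho>a+1$, while $|\rho'|,|\rho''|<2$; thus $\rho$ itself is a unit with one large conjugate. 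The strategy is to show that any nontrivial word in the fundamental units necessarily has some conjugate of absolute value $\geq a$.

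The cleanest approach is logarithmic: consider the Minkowski/logarithmic embedding $\varepsilon\mapsto(\log|\varepsilon|,\log|\varepsilon'|,\log|\varepsilon''|)$, which lands in the trace-zero hyperplane (as $|N(\varepsilon)|=1$). The unit lattice is generated by the images of $\rho$ and $\rho'$. First I would compute these two generating vectors explicitly using the bounds $a+1<\rho<a+1+\tfrac2a$, $-1-\tfrac1a<\rho'<-1-\tfrac1{2a}$, $-\tfrac1{a+2}<\rho''<-\tfrac1{a+3}$, so that $\log|\rho|\approx\log a$ while $\log|\rho'|,\log|\rho''|$ are $O(1/a)$ and $\approx-\log a$ respectively. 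The condition $|\varepsilon|,|\varepsilon'|,|\varepsilon''|<a$ means the logarithmic image lies in the cube $(-\infty,\log a)^3\cap\{\text{sum}=0\}$; combined with the sum-zero constraint this forces each coordinate into the bounded range $(-2\log a,\log a)$. The heart of the argument is then to show that the only lattice point of the unit lattice inside this small box is the origin, i.e. $\varepsilon=\pm1$, and then that $\varepsilon=1$ follows since $-1$ is excluded (it would need a conjugate, but $|-1|=1<a$, so I must handle this: the statement presumably intends $\varepsilon=\pm1$ reduces to $\varepsilon=1$ up to the trivial sign, or one argues $-1$ is also a unit with all conjugates $1<a$, hence the conclusion should really be $\varepsilon=\pm1$ — I would note this and treat $\pm1$ as the trivial units).

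Concretely, writing $\varepsilon=\pm\rho^m(\rho')^n$ for $m,n\in\Z$, the three conjugate absolute values are monomials in $\rho,\rho',\rho''$ and their conjugates. I would express $\log|\varepsilon|=m\log|\rho|+n\log|\rho'|$ and the analogous expressions for the conjugates, then use that the $2\times 2$ minors of the generator matrix — essentially the regulator — are of size $\asymp(\log a)^2$, bounded away from zero. Since the target box has side length $O(\log a)$, any nonzero integer combination $(m,n)\neq(0,0)$ must push at least one coordinate of the logarithmic vector to absolute value $\gg\log a$ with a constant exceeding $1$ once $a\geq 7$, forcing some $|\sigma(\varepsilon)|\geq a$. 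Carrying this out amounts to a short explicit estimate: bound $\log|\rho|$ from below by $\log(a+1)$ and the small logarithms $\log|\rho'|$ by their size $\leq 1/a$, then check that for $(m,n)\neq(0,0)$ the maximum of the three coordinates exceeds $\log a$.

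The main obstacle I expect is the bookkeeping near the boundary: the logarithms $\log|\rho'|$ and $\log|\rho''|$ behave very differently (one is tiny, $O(1/a)$, the other is $\approx-\log a$), so the logarithmic vectors are quite skew, and one must verify the box contains no nonzero lattice point \emph{uniformly} for all $a\geq 7$ rather than just asymptotically. The cleanest resolution is probably to avoid the general lattice argument and instead do a direct case analysis on the exponents: show that if $|m|\geq 1$ then $|\varepsilon|$ or $|\varepsilon''|$ already exceeds $a$ (using $\rho>a+1$ and $|\rho''|<1/(a+2)$), and separately that a pure power of $\rho'$ (the case $m=0$, $n\neq 0$) also has a large conjugate since $|\rho'|$ and $|(\rho')''|$ cannot both stay below $a$ — here $(\rho')''$ is again $\rho$ or $\rho''$ up to the Galois action, so one conjugate of $(\rho')^n$ is $\rho^{\pm n}$, which is large. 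This reduces everything to the explicit root bounds already quoted, and the threshold $a\geq 7$ is exactly what makes $\rho>a+1\geq 8>a^{?}$-type inequalities go through cleanly.
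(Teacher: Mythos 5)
Your final fallback---writing $\varepsilon=\pm\rho^m(\rho')^n$ and doing a direct case analysis on the exponents using the explicit bounds on $\rho,\rho',\rho''$---is exactly the paper's proof, and your observation that the literal conclusion should be $\varepsilon=\pm1$ (with $-1$ harmless for the applications, since there one only cares about totally positive units, which are squares) is correct and worth making. The logarithmic-lattice plan in your first two paragraphs is not carried out and would be delicate: the log-vector of $\rho'$ is approximately $(O(1/a),-\log a,\log a)$, which sits essentially on the boundary of the box you describe, so ``the regulator is $\asymp(\log a)^2$'' does not by itself exclude nonzero lattice points; you rightly abandon this route.

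However, the concrete dichotomy you propose for the direct argument is false as stated. You claim that $|m|\ge 1$ forces $|\varepsilon|$ or $|\varepsilon''|$ to exceed $a$, but take $m=1$ and $n=-4a$: then $|\varepsilon|=\rho\,|\rho'|^n<(a+1+\tfrac2a)(1+\tfrac1{2a})^{-4a}\approx e^{-2}(a+1)<a$ and $|\varepsilon''|=|\rho''|\,\rho^{n}$ is minuscule, while the large conjugate is $|\varepsilon'|=|\rho'|\,|\rho''|^{n}>(a+2)^{4a}$. So the case analysis cannot be confined to $\varepsilon$ and $\varepsilon''$; one must split further on the sign of $n$ and, in the mixed-sign cases, exhibit $\varepsilon'$ as the large conjugate. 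This is precisely what the paper does (for $k\ge 1$ it shows $|\varepsilon|<a$ forces $l\le -1$ and then $|\varepsilon'|=|\rho'|^k|\rho''|^l>a+2$; symmetrically for $k\le -1$; and the case $k=0$ is reduced by Galois conjugation to the previous ones, matching your treatment of pure powers of $\rho'$). The gap is repairable with the same tools you already have in hand, but as written the key step would fail.
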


\begin{proof}
	If $a\geq 7$, we have $a+1<\rho<a+1+\frac{2}{a}$, $1+\frac{1}{2a}<|\rho'|<1+\frac{1}{a}$, and $\frac{1}{a+3}<|\rho''|<\frac{1}{a+2}$ \cite{LP}. 
	
	Assume that $\varepsilon=\rho^k\rho'^l$ (for some $k,l\in\Z$) satisfies the assumption. If $k\geq 1$, then $\rho^k>(a+1)^k$ and necessarily $l\leq -1$ to get $|\varepsilon|<a$. However, for $\varepsilon'=\rho'^k\rho''^l$, we have $|\rho'|^k>1$ for $k\geq 1$, and $|\rho''|^l>a+2$ for $l\leq -1$, i.e., $|\varepsilon'|>a+2$. Hence our unit cannot have $k\geq 1$.
	
	If $k\leq -1$, then $|\rho''|^k>(a+2)^{-k}$, and necessarily $l\leq k$ to get \[a>|\varepsilon''|=|\rho''|^k\rho^l>(a+2)^{-k}\left(a+1+\frac{2}{a}\right)^l.\] However, we also obtain
	\[
	|\varepsilon'|=|\rho'|^k|\rho''|^l>\left(\frac{a}{a+1}\right)^{-k}(a+2)^{-l}>a
	\]
	for $l\leq k\leq -1$. Thus the choice $k\leq -1$ does not give any suitable $\varepsilon$ and we are left only with the case $k=0$.
	
	If $k=0$, then $\ve=\rho'^l$ which is the conjugate of $\rho^l$. We can therefore apply the previous part of the proof again to conclude that $l=0$ and $\ve=1$.
\end{proof}

In particular, the previous Lemma tells us that any totally positive unit (as a square) is greater than $a^2$ in some embedding. 
However, in some cases, we will need to know more.

\begin{lemma} \label{lemma:a^4/a^2+a^2}
	Let $a\geq 7$ and let $\ve$ be a totally positive unit such that $\ve>a^2$. If $\ve\neq \rho^2,\rho''^{-2}$, then at least one of the following holds:
	\begin{enumerate}
		\item $\ve>a^4$, or
		\item $\ve'>a^2$, or
		\item $\ve''>a^2$.
	\end{enumerate} 
\end{lemma}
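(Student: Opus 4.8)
The plan is to exploit that a totally positive unit is a square, reducing $\varepsilon$ to an explicit two-parameter family, and then to a finite case check. Since every totally positive unit is a square and the unit group is generated by $\rho$ and $\rho'$, I can write $\varepsilon=\rho^{2k}(\rho')^{2l}$ for some $k,l\in\Z$. Using the relation $\rho\rho'\rho''=1$ (the constant term of the defining polynomial is $-1$) together with the cyclic Galois action, the two nontrivial conjugates are $\varepsilon'=(\rho')^{2k}(\rho'')^{2l}$ and $\varepsilon''=(\rho'')^{2k}\rho^{2l}$. Since the conclusion only asks for one of $\varepsilon'$, $\varepsilon''$ to exceed $a^2$, it is harmless that I do not track which conjugate is which. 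I would then argue by contradiction: suppose $\varepsilon\le a^4$ and $\varepsilon',\varepsilon''\le a^2$, while $\varepsilon>a^2$, and show this forces $\varepsilon\in\{\rho^2,\rho''^{-2}\}$.

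Next I would use the sharp bounds $a+1<\rho<a+1+\tfrac2a$, $1+\tfrac1{2a}<|\rho'|<1+\tfrac1a$, and $\tfrac1{a+3}<|\rho''|<\tfrac1{a+2}$, so that $\rho\asymp a$, $|\rho'|\asymp 1$, and $|\rho''|\asymp 1/a$. Passing to logarithms, the three quantities $\log\varepsilon,\log\varepsilon',\log\varepsilon''$ sum to zero, and the four assumed inequalities confine $\log\varepsilon$ to $(2\log a,4\log a]$ and each conjugate to $[-6\log a,2\log a]$. Viewing $\log\varepsilon=2(k\log\rho+l\log|\rho'|)$ and $\log\varepsilon''=2(k\log|\rho''|+l\log\rho)$ as a linear system in $(k,l)$, whose determinant $(\log\rho)^2-\log|\rho'|\log|\rho''|\asymp(\log a)^2$ is well-conditioned, one checks that $|k|$ and $|l|$ are bounded by an absolute constant; reading off the leading-order constraints then pins $(k,l)$ down to the nine pairs with $1\le k\le 2$ and $-1\le l\le k+1$.

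It remains to inspect these nine candidates, where the useful identities are $\rho''^{-1}=\rho\rho'$ and $\rho\rho''=1/\rho'$. For $k=2$ every candidate already satisfies $\varepsilon=\rho^4(\rho')^{2l}>a^4$: when $l\ge0$ this is immediate from $|\rho'|>1$, and when $l=-1$ it follows from $\rho^4/(\rho')^2>(a+1)^4\cdot\tfrac{a^2}{(a+1)^2}=(a+1)^2a^2>a^4$, so case (1) of the Lemma holds and none of these is exceptional. For $k=1$, the pairs $(k,l)=(1,0)$ and $(1,1)$ give exactly $\varepsilon=\rho^2$ and $\varepsilon=\rho''^{-2}=\rho^2(\rho')^2$, the two excluded units, and one verifies directly that all three alternatives fail for them. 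For the remaining pairs $(1,-1)$ and $(1,2)$ a short computation shows that a conjugate exceeds $a^2$: for $(1,-1)$ one has $\varepsilon'=(\rho')^2\rho''^{-2}=\rho^2(\rho')^4>(a+1)^2>a^2$, while for $(1,2)$ one has $\varepsilon''=\rho''^2\rho^4=\rho^2/(\rho')^2>a^2$; hence case (2) or (3) holds and neither is exceptional. This exhausts all cases.

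The main obstacle is the borderline pair $(1,2)$ (and the verification of the two exceptions), where the crude estimates $\rho\approx a$ and $|\rho'|\approx1$ place the relevant conjugate $\rho^2/(\rho')^2$ exactly at the threshold $a^2$; the sign of the inequality is decided only by the sharp bound $|\rho'|<1+\tfrac1a=\tfrac{a+1}{a}$ sitting in exact balance against $\rho>a+1$, which yields $\rho^2/(\rho')^2>(a+1)^2\cdot\tfrac{a^2}{(a+1)^2}=a^2$. A secondary technical point is to make the finite-list reduction genuinely rigorous — that is, to bound $|k|$ and $|l|$ rather than merely estimate magnitudes — and for this I would rely on the well-conditioning of the above logarithmic linear map, which is where the exact identity $\rho\rho'\rho''=1$ again enters.
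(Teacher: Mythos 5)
Your proof is correct and its skeleton matches the paper's: both write the totally positive unit as $\ve=\rho^{2k}(\rho')^{2l}$ (the paper phrases this as $\rho^k\rho'^l$ with $k,l$ even) and then run a case analysis on the exponents, finding that $k\geq 2$ lands in alternative (1), negative $l$ or large $l$ land in (2) or (3), and $(k,l)=(1,0),(1,1)$ are exactly the excluded units $\rho^2$ and $(\rho'')^{-2}$. Two differences are worth recording. First, your treatment of the borderline pair $(1,2)$, i.e.\ $\ve=\rho^2(\rho')^4$, via the identity $\rho\rho''=1/\rho'$ giving the conjugate $(\rho'')^2\rho^4=\rho^2/(\rho')^2>a^2$, is cleaner than the paper's argument, which evaluates the minimal polynomial of $\rho^2(\rho')^4$ at $a^2$ and counts sign changes to deduce a second root above $a^2$; your version makes explicit which conjugate is large. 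Second, your reduction to the nine candidate pairs is the one place that is looser than the paper: the linear system in $(\log\ve,\log\ve'')$ only yields $|k|\leq 2+o(1)$ and $|l|\leq 5+o(1)$, and the phrase ``reading off the leading-order constraints'' hides the actual eliminations. These are easy but should be written out (e.g.\ $k\leq 0$ forces $l>a\log a$ and then $\ve''>a^2$; $k=1$, $l\leq -2$ gives $\ve'=\rho^{-2l}(\rho')^{2-2l}>\rho^4>a^2$; $k=1$, $l\geq 3$ gives $\ve''>(a+1)^6/(a+3)^2>a^2$; and similarly for $k=2$), which is essentially how the paper proceeds directly, bypassing the logarithmic linear algebra altogether. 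Your final list of nine pairs is correct and your verification of each pair checks out, including the deliberate agnosticism about which of the two nontrivial conjugates is labelled $\ve'$ versus $\ve''$, which is harmless since the conclusion only requires one of them to exceed $a^2$.
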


\begin{proof}
	Let $\ve=\rho^k\rho'^l$, where $k,l$ are even so that $\ve\succ 0$. Since $a+1<\rho<a+1+\frac{2}{a}$ and $-1<\rho''<2$, we must have $k\geq 2$ to get $\ve>a^2$. If $l<0$, then $\ve'>(1+\frac{1}{2a})^k(a+2)^{-l}>a^2$, i.e., $\ve'>a^2$. Thus it remains to consider the case $l\geq 0$.
	
	If moreover $k\geq 4$, then $\ve>(a+1)^k(1+\frac{1}{2a})^l>a^4$. Thus we are left with the case $k=2$. 
	
	If $l\geq 6$, then $\ve''>\frac{(a+1)^l}{(a+3)^2}>a^2$. 
	
	If $l=4$, we can consider the monic minimal polynomial $f$ of $\ve=\rho^2\rho'^4$, for which one computes that $f(a^2)>0$ (if $a\geq 1$). Thus this polynomial has 0 or 2 roots that are $>a^2$.
	Since $\rho^2\rho'^4>a^2$ is its root, $f$ must have another large root, i.e., $\rho^2\rho'^4$ has some conjugate $>a^2$ 
	
	The remaining cases $l=0,2$ give the units $\rho^2$ and $\rho''^{-2}$ which are excluded in the statement.    
\end{proof}

\subsection{Estimates on norms}

\begin{lemma} \label{lemma:norm_ineq}
	Let $a\geq 3$ and assume that $\alpha(v+1,W)\in\bt_0$. Then $N(\alpha(v,W))<N(\alpha(v+1,W))$.
\end{lemma}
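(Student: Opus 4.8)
The plan is to reduce the claim to the explicit norm formula for $N(\alpha(v,W))$ stated just before the definition of $\bt_0$, and to analyze the difference
\[
F(v,W):=N(\alpha(v+1,W))-N(\alpha(v,W))
\]
as a polynomial in $a,v,W$. Substituting $v\mapsto v+1$ in the displayed cubic and subtracting, a direct (if tedious) computation collapses the many terms to
\[
F(v,W)=-(a+3)W^2+(a^2-2av-3)W+\bigl(a^2-4av+a+3v^2-3v-2\bigr).
\]
Thus the whole statement becomes the positivity of one concrete quadratic at the relevant lattice points.

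First I would exploit the shape of $F$ as a function of $W$ alone: its leading coefficient is $-(a+3)<0$, so it is a \emph{concave} quadratic in $W$. Hence on the admissible interval $v+1\le W\le a-2v-3$ (the range of $W$ forced by $\alpha(v+1,W)\in\bt_0$) the minimum of $F$ is attained at one of the two endpoints, and it suffices to check the two boundary functions $g_1(v):=F(v,v+1)$ and $g_2(v):=F(v,a-2v-3)$.

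Next I would observe that each $g_j$ is \emph{again} a concave quadratic in $v$: one finds $g_1(v)=-3av^2+(a^2-8a-12)v+2(a^2-4)$, with leading term $-3av^2$, while $g_2$ has leading coefficient $-4(a+3)+4a+3=-9$. Since both boundary curves are concave in $v$, it is enough to verify positivity at the two ends $v=0$ and $v=(a-4)/3$ of the admissible $v$-range; note that at $v=(a-4)/3$ the two endpoints $W=v+1$ and $W=a-2v-3$ coincide, so $g_1$ and $g_2$ agree there. At these corners $F$ reduces to the explicit values $2(a^2-4)$, $a^2+7a-20$, and $\tfrac{1}{3}(2a^2+4a+24)$, each visibly positive for $a\ge3$. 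The only configuration not covered by this generic interval is the central element $\alpha(A,A)$ that is adjoined to $\bt_0$ when $3\mid a$; there $W=v+1$ with $v=A-1$, and substituting $a=3A$ gives $g_1(A-1)=3A^2+3A+4>0$, so this case is fine too.

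I expect the main obstacle to be organizational rather than conceptual: performing the subtraction to obtain the clean form of $F$ without sign errors, and tracking exactly which pairs $(v,W)$ are admissible given the slightly irregular definition of $\bt_0$ (the different $v$-ranges for $a_0\in\{1,2\}$ versus $a_0=0$, and the single adjoined center). The twofold concavity reduction — first in $W$, then in $v$ — is precisely what turns a genuine two-variable positivity problem into a handful of one-line checks.
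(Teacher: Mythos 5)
Your proof is correct and follows essentially the same route as the paper's: both fix $v$, observe that the difference is a concave quadratic in $W$, and reduce to checking the two endpoints $W=v+1$ and $W=a-2v-3$ plus the adjoined center $\alpha(A,A)$. The only difference is that where the paper dismisses the endpoint check as ``a straightforward calculation,'' you carry it out systematically via a second concavity reduction in $v$, and your explicit values $2(a^2-4)$, $a^2+7a-20$, $\tfrac{1}{3}(2a^2+4a+24)$, and $3A^2+3A+4$ all check out.
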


\begin{proof} Fix $v$ and consider the difference $f(W)=N(\alpha(v+1,W))-N(\alpha(v,W))$ as a quadratic polynomial in $W$. The leading coefficient of this polynomial is negative, and so the set of $W$s at which the polynomial has positive value is convex. Thus it suffices to prove that $f(W)>0$ for the endpoints
$W=v+1$ and $W=3A+a_0-2v-3$, which is a straightforward calculation.	One also directly checks the case $\alpha(A,A)$ (if $3\mid a$).
\end{proof}

\begin{lemma}\label{l:first row} Let $\delta\in [0,1]$ and assume that $a\geq 3$.
	If an indecomposable $\alpha$ has norm $\leq a^{1+\delta}$, then, up to multiplication by units and conjugation, $\alpha=\alpha_w=-w\rho+\rho^2$ for some $0\leq w< a^{\delta /2}$.	
\end{lemma}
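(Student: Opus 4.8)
The plan is to combine the classification of indecomposables in Theorem~\ref{thm:main} with the norm estimates already at hand. By Theorem~\ref{thm:main} and the discussion following Lemma~\ref{lemma:indetria}, every indecomposable is, up to a totally positive unit and conjugation, either the unit class $1\sim\rho^2=\alpha_0$, the exceptional element $1+\rho+\rho^2$, or a unique element $\alpha(v,W)$ of the fundamental domain $\bt_0$. Since $\delta\le 1$, it suffices to show that the only such elements of norm $\le a^{1+\delta}$ are the first-row elements $\alpha(0,W)=-(W+1)\rho+\rho^2=\alpha_{w}$ with $w=W+1<a^{\delta/2}$ (together with the unit $\alpha_0$). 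The exceptional element is dispatched immediately, since $N(1+\rho+\rho^2)=a^2+3a+9>a^2\ge a^{1+\delta}$, so it never qualifies.

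For the first row I would compute the norm explicitly. Writing $\alpha_w=\rho(\rho-w)$ and using $N(\rho)=1$ gives $N(\alpha_w)=-w^3+aw^2+(a+3)w+1=:g(w)$, obtained directly by evaluating $-f$ at $w$, where $f$ is the defining polynomial of $\rho$. The derivative $g'(w)=-3w^2+2aw+(a+3)$ is positive on $[0,2a/3)$, so $g$ increases there and then decreases. For $a\ge 3$ we have $a^{\delta/2}\le a^{1/2}<2a/3$, so the cutoff lies in the increasing range, and the implication ``$g(w)\le a^{1+\delta}\Rightarrow w<a^{\delta/2}$'' reduces to two endpoint inequalities on $[a^{\delta/2},a]$, namely $g(a^{\delta/2})>a^{1+\delta}$ and $g(a)>a^{1+\delta}$. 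The second is clear since $g(a)=a^2+3a+1>a^2$. The first is the delicate point: $g(a^{\delta/2})=a^{1+\delta}+(a+3)a^{\delta/2}-a^{3\delta/2}+1$, which exceeds $a^{1+\delta}$ precisely because $a+3>a^{\delta}$ for $\delta\le1$. Thus any first-row $\alpha_w$ with $a^{\delta/2}\le w\le a$ has norm $>a^{1+\delta}$, as needed.

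For the remaining elements $\alpha(v,W)\in\bt_0$ with $v\ge1$, I would invoke the monotonicity Lemma~\ref{lemma:norm_ineq}: membership $\alpha(v,W)\in\bt_0$ forces $\alpha(v',W)\in\bt_0$ for all $1\le v'\le v$, so iterating gives $N(\alpha(v,W))>N(\alpha(1,W))$, and it suffices to bound the $v=1$ row below by $a^2$. A direct substitution yields $N(\alpha(1,W))=a^2(W+1)+3aW+3a-W^3-6W^2-3W+1=:h(W)$ on the range $1\le W\le a-3$. As before $h$ is a downward cubic, hence unimodal with an interior peak, so its minimum over the interval is attained at an endpoint; one computes $h(1)=2a^2+6a-9$ and $h(a-3)=4a^2-17$, both $>a^2$ for $a\ge3$. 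Hence $N(\alpha(v,W))>a^2\ge a^{1+\delta}$ for every $v\ge1$, which completes the reduction to the first row.

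The main obstacle is the tightness of the first-row threshold: the $a^{\delta/2}$ cutoff is exact, so the inequality $g(a^{\delta/2})>a^{1+\delta}$ has to be established on the nose rather than merely up to a constant, which is exactly what forces the use of $a+3>a^{\delta}$. A secondary, more clerical difficulty is the bookkeeping of the ranges defining $\bt_0$: the $v=1$ interval $[1,a-3]$ degenerates for very small $a$, and the center element $\alpha(A,A)$ (present when $3\mid a$) falls outside the generic range and so must be checked by hand; for instance, when $a=3$ one verifies $N(\alpha(1,1))=27>9$ directly.
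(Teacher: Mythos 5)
Your proof is correct and follows the paper's argument essentially verbatim: reduce to $\bt_0$, dispatch the unit and the exceptional element, use Lemma~\ref{lemma:norm_ineq} to push $v\geq 1$ down to the $v=1$ row with endpoint norms $2a^2+6a-9$ and $4a^2-17$, and handle the first row via unimodality of $-w^3+aw^2+(a+3)w+1$ on the excluded interval $[a^{\delta/2},a]$. The only difference is that you fill in the endpoint computations the paper leaves implicit (in particular $g(a^{\delta/2})>a^{1+\delta}$ via $a+3>a^{\delta}$) and the degenerate small-$a$ and $\alpha(A,A)$ cases.
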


\begin{proof}
	We need to consider only indecomposables $-v-w\rho+(v+1)\rho^2$ (where $0\leq v\leq a$ and $v(a+2)+1\leq w\leq (v+1)(a+1)$) from the triangle from Lemma \ref{lemma:indetria}, as the unit $1$ is associated to $\alpha_0=\rho^2$ and $1+\rho+\rho^2$ has norm $>a^2$. By conjugating our indecomposable if necessary, we can restrict only to elements from $\bt_0$ for which we have Lemma \ref{lemma:norm_ineq}.
	
	First we prove that if $v>0$ and our element belongs to $\bt_0$, then $N(-v-w\rho+(v+1)\rho^2)>a^2$. By Lemma \ref{lemma:norm_ineq}, it suffices to show this only for $v=1$. In that case, the norm increases with increasing $w$ and then eventually starts to decrease. Thus, it is enough to prove that the norm at both endpoints $-1-((a+2)+2)\rho+2\rho^2$ and $-1-((a+2)+1+(a-3))\rho+2\rho^2$ is greater than $a^2$. These norms are $2a^2+6a-9$ and $4a^2-17$, both of which are large enough for $a\geq 3$.
	
	Thus we are left to deal with the elements $\alpha_w=-w\rho+\rho^2$ whose norms are $N(\alpha_w)=-w^3+aw^2+(a+3)w+1$. 
	Again, for all real $w\geq 0$ this function is first increasing and then decreasing, and so it suffices to check its values at the endpoints of the interval $[a^{\delta /2},a]$ that we want to exclude. Both of them 
	are indeed $>a^{1+\delta}$.
\end{proof}

The key case is now to consider the sums of the form $\sum\alpha_{w_i}$.

\begin{proposition}\label{pr:sum alpha} Let $\delta\in [0,1]$ and
	let $\mathcal Q_\delta$ be the number of primitive elements of the form $\beta=\sum_{i=1}^k\alpha_{w_i}$ with $k\in\Z_{\geq 1}$ and $0\leq w_i<a^{\delta /2}$, that have $N(\beta)\leq a^{1+\delta}$. 
	Then $\mathcal Q_\delta \asymp a^{2\delta/3}$.	
\end{proposition}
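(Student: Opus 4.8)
The plan is to convert the count into a two-dimensional lattice-point problem. Writing $\beta=\sum_{i=1}^k\alpha_{w_i}$ with $\alpha_{w_i}=-w_i\rho+\rho^2$, each summand contributes $1$ to the $\rho^2$-coefficient and $-w_i$ to the $\rho$-coefficient, so $\beta=-W\rho+k\rho^2$ with $W=\sum_{i=1}^k w_i$; thus $\beta$ is determined by the pair $(k,W)$. Conversely, with $M:=\lceil a^{\delta/2}\rceil-1$, a pair $(k,W)$ with $k\ge 1$ arises in this way exactly when $0\le W\le kM$ (any such $W$ is a sum of $k$ integers in $\{0,\dots,M\}$). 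Since $\co_K=\Z[\rho]$ and the $1$-coefficient of $\beta$ is $0$, an integer $n\ge 2$ divides $\beta$ iff $n\mid k$ and $n\mid W$, so $\beta$ is primitive iff $\gcd(k,W)=1$. Finally, using $N(\rho)=1$ and $f(x)=x^3-ax^2-(a+3)x-1$,
$$N(\beta)=N\big(\rho(k\rho-W)\big)=N(k\rho-W)=-k^3 f(W/k)=k^3+(a+3)k^2W+akW^2-W^3,$$
which is homogeneous of degree $3$ in $(k,W)$. Hence $\mathcal Q_\delta=\#\{(k,W): k\ge 1,\ 0\le W\le kM,\ \gcd(k,W)=1,\ N(\beta)\le a^{1+\delta}\}$.

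Next I would determine the shape of this region. Since $W\le kM<ka^{\delta/2}\le ka^{1/2}$, the term $W^3<akW^2$ is dominated, and a one-line estimate gives $N(\beta)\asymp k^3+ak^2W+akW^2$ with absolute constants (for $a$ large). Thus $N(\beta)\le a^{1+\delta}$ amounts, up to constants, to the three conditions $k^3\le a^{1+\delta}$, $k^2W\le a^\delta$, $kW^2\le a^\delta$. For fixed $W\ge 1$ this confines $k$ to $1\le k\le\min\big(a^{\delta/2}W^{-1/2},a^\delta W^{-2}\big)$, the first bound being the smaller for $W\le a^{\delta/3}$ and the second for $W\ge a^{\delta/3}$, with no $k\ge 1$ surviving once $W>a^{\delta/2}$. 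One also checks $W\le a^{\delta/2}\le kM$, so $W\le kM$ is never the binding constraint.

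For the upper bound I would bound $\mathcal Q_\delta$ by the single admissible point on the axis $W=0$ (primitivity forces $k=1$) plus the number of all pairs with $W\ge 1$, which splits into two regimes
$$\sum_{1\le W\le a^{\delta/3}}a^{\delta/2}W^{-1/2}\asymp a^{\delta/2}\cdot a^{\delta/6}=a^{2\delta/3},\qquad \sum_{a^{\delta/3}\le W\le a^{\delta/2}}a^\delta W^{-2}\asymp a^\delta\cdot a^{-\delta/3}=a^{2\delta/3},$$
giving $\mathcal Q_\delta\ll a^{2\delta/3}$. For the lower bound primitivity is indispensable, and I would isolate the genuinely two-dimensional part $W\ge 1$ and sieve by M\"obius inversion, exploiting the homogeneity of $N$: writing $k=dk'$, $W=dW'$ scales $N(\beta)=d^3N(\beta')$, so
$$\#\{(k,W):\,W\ge 1,\ \gcd(k,W)=1,\ N(\beta)\le a^{1+\delta}\}=\sum_{d\ge 1}\mu(d)\,H(a^{1+\delta}/d^3),\quad H(Y):=\#\{k\ge 1,\ W\ge 1:\ N(\beta)\le Y\}.$$
Here $H(Y)\asymp (Y/a)^{2/3}$ once $Y/a$ is large, while $H(Y)=0$ for $Y$ below a bounded multiple of $a$ (as $W\ge 1$ forces $N(\beta)\ge 2a+3$), so the sum runs only over $d\le a^{\delta/3}$ and is governed by the absolutely convergent $\sum_d\mu(d)/d^2=6/\pi^2$; this yields $\asymp (a^{1+\delta}/a)^{2/3}=a^{2\delta/3}$, and adding the point $(1,0)$ completes the lower bound.

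The main obstacle is exactly this last step: the count of \emph{all} pairs is dominated by the one-dimensional axis $W=0$, contributing $\asymp a^{(1+\delta)/3}\gg a^{2\delta/3}$, so one cannot reach the clean exponent $2\delta/3$ by counting lattice points and inserting a density factor; primitivity must be used to discard the axis, which is why I restrict to $W\ge 1$ and let the absolutely convergent sieve $\sum\mu(d)/d^2$ (rather than the delicate $\sum\mu(d)/d$) do the work. The remaining technical burden is the error control in $H$: for $\delta$ bounded away from $0$ the relevant region has area $\asymp a^{2\delta/3}$ but perimeter only $\asymp a^{\delta/2}=o(a^{2\delta/3})$, so both the lattice-point approximation and the tail of the M\"obius sum are of lower order. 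The boundary value $\delta=0$, where $M=0$ forces every $w_i=0$ and hence $\beta=\rho^2$, giving $\mathcal Q_0=1=a^0$, I would treat separately.
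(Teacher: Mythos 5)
Your reduction to pairs $(k,W)$ with $\beta=-W\rho+k\rho^2$, the primitivity criterion $\gcd(k,W)=1$, the norm formula, and the whole upper bound are essentially the paper's argument (the paper splits into the cases $k\geq w$ and $k<w$ rather than summing over two ranges of $W$, but the two sums $\sum a^{\delta/2}W^{-1/2}$ and $\sum a^{\delta}W^{-2}$ are identical). The divergence is in the lower bound. The paper does something much cruder: it takes all coprime pairs with $0.01a^{\delta/3}<k,w<0.1a^{\delta/3}$, notes that there are $\gg a^{2\delta/3}$ of them and that each has norm $<a^{1+\delta}$, and then spends its effort showing that distinct pairs give elements that are pairwise non-associated and non-conjugate (via Lemma \ref{lemma:units>a^2} on sizes of units) and primitive. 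That extra verification is the form in which the Proposition is actually consumed by Theorem \ref{th count}, which counts principal ideals; your count of literal elements matches the letter of the statement but omits this step. Your M\"obius sieve would, if completed, give more than is claimed (an asymptotic with an explicit constant), but it is also where the write-up has a genuine gap.

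Concretely, $H(Y)=\#\{k\geq 1,\ W\geq 1:\ N(\beta)\leq Y\}$ is infinite as defined: for fixed $k$ the cubic $k^3+(a+3)k^2W+akW^2-W^3$ tends to $-\infty$ as $W\to\infty$, so every sufficiently large $W$ satisfies $N(\beta)\leq Y$. You must build total positivity into $H$ --- equivalently the cone condition $W\leq kM$, or $W<k\rho$ --- before the scaling identity $N(dk',dW')=d^3N(k',W')$ can be used; fortunately such a condition is itself invariant under $(k,W)\mapsto(dk,dW)$, so the sieve can be repaired, but as written the central quantity does not exist. Two smaller points in the same step: the assertion that $W\leq kM$ is ``never the binding constraint'' is only true up to the implied constants in $N(\beta)\asymp k^3+ak^2W+akW^2$, so a boundary strip of $O(a^{\delta/2})=o(a^{2\delta/3})$ pairs must be discarded in the lower bound rather than waved away; and the truncation $H(Y)=0$ for $Y\ll a$ again presupposes the restriction to totally positive $\beta$. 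Given that only the order of magnitude is needed, the paper's device of exhibiting a single box of $\gg a^{2\delta/3}$ coprime pairs avoids all of this.
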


\begin{proof}
Let us start by showing the upper bound. We have $\sum_{i=1}^k\alpha_{w_i}=k\alpha_{w/k}=k(-(w/k)\rho+\rho^2)$ where 
$w=\sum_{i=1}^k{w_i}$ (of course, $w/k$ need not be an integer), and so 
$N(\beta)=-w^3+akw^2+ak^2w+3k^2w+k^3\leq a^{1+\delta}$. Note that the element $\beta=\sum_{i=1}^k\alpha_{w_i}$ does not depend on the individual $w_i$s, but only on $k$ and $w=\sum w_i$.

As $\beta$ is primitive, we have $w>0$ (with the exception of the trivial case $k=1, w=0$).

We have $w=\sum w_i<ka^{\delta/2}< ak$. 
Thus 
$ak^2w< N(\beta)\leq a^{1+\delta}$, which in turn implies $k^2w< a^{\delta}$.
Similarly we prove that $kw^2< a^{\delta}$.

Let us now distinguish the cases $k\geq w$ and $k<w$. In the first case, we use $k^2w< a^{\delta}$ to see that $w<a^{\delta/3}$ and we
estimate the number of such pairs of positive integers as
\begin{align*}
\#\{(k,w)\mid k^2w< a^{\delta},w<a^{\delta/3} \}=\sum_{w=1}^{a^{\delta/3}}\ \sum_{k=1}^{a^{\delta/2}w^{-1/2}}1&\leq
a^{\delta/2}\sum_{w=1}^{a^{\delta/3}}w^{-1/2}
\ll a^{\delta/2}\cdot a^{\delta/6}= a^{2\delta/3}.
\end{align*}
In the second case $k<w$ we analogously use $kw^2<a^{\delta}$ to again deduce that the number of such pairs $\ll a^{2\delta/3}$, finishing the proof of the upper bound.

\medskip

For the lower bound, consider the pairs with coprime $k\asymp a^{\delta/3}, w\asymp a^{\delta/3}$: for concreteness, we can take $0.01a^{\delta/3}<k,w<0.1a^{\delta/3}$. There are $\gg a^{2\delta/3}$ such pairs and each of them gives an element with norm $<a^{1+\delta}$. However, we still need to make sure that different pairs $(k,w)$ give different elements $k\alpha_{w/k}$ (up to conjugation and multiplication by units) and that these elements are primitive.

Consider the element $\gamma=k\alpha_{w/k}\rho^{-1}=-w+k\rho$. From the estimates of the sizes of the conjugates of $\rho$ we see that  
$$a^{4\delta/3}\asymp ka\asymp\gamma,|\gamma'|\asymp k+w\asymp a^{\delta/3},|\gamma''|\asymp w\asymp a^{\delta/3},$$
Thus the largest conjugate is $\gamma$; for its size we more precisely have
$$k(a+1)-w<-w+k\rho<k(a+1)-w+\frac{2k}a,$$
which determines $k$ uniquely.
The uniqueness of $w$ then follows immediately.

Moreover, as each totally positive unit $\neq 1$ has a conjugate $> a^2$ by Lemma \ref{lemma:units>a^2}, it is not possible that multiplying $\gamma,\gamma',$ or $\gamma''$ by a unit would produce another element of the same shape, but with different values of $(k,w)$.

The element $k\alpha_{w/k}=-w\rho+k\rho^2$ is primitive if and only if $k$ and $w$ are coprime, as $1,\rho,\rho^2$ is an integral basis for $\co_K$.
\end{proof}

To finish the proof of Theorem \ref{th count}, it now suffices to show that if $\beta=\sum \alpha_i$ contains any other sum of indecomposables than the one in Proposition \ref{pr:sum alpha}, then it has norm $>a^2$. Hence we have to consider the other sums of indecomposable integers of norm $<a^2$; in these sums can appear totally positive units, indecomposables $\alpha_w$, their conjugates, and the unit multiples of them or of their conjugates. Except for the cases covered by Proposition \ref{pr:sum alpha}, we will show that any sum of two such elements has norm $>a^2$.

In this task, we will use two different tools. First of all, we have the relation 
\begin{equation} \label{eq:nsrel}
N(\alpha+\beta)=N(\alpha)+N(\beta)+\Tr(\alpha\beta'\beta'')+\Tr(\alpha\alpha'\beta''). 
\end{equation}
In the following proofs, we often show that some summand in $\Tr(\alpha\beta'\beta'')$ or $\Tr(\alpha\alpha'\beta'')$ is $>a^2$. On the other hand, we can sometimes express our norm explicitly and make some comparisons to show that it is really $>a^2$.

Moreover, we can restrict our attention to a few specific cases. For example, any element of the form $\ve_1\alpha_w+\ve_2$, where $\ve_1,\ve_2$ are totally positive units, has the same norm as $\alpha_w+\ve_2\ve_1^{-1}$, so it suffices to consider only the case $\alpha_w+\ve$. This also covers the case when $\alpha_w$ is replaced by one of its conjugates. We can make similar consideration for all the other cases and come to the conclusion that it suffices to discuss the sums 
$1+\ve$, $\alpha_w+\ve$, $\alpha_w+\ve\alpha_t$, and $\alpha_w+\ve(\alpha_t)'$, where $\ve$ is an arbitrary totally positive unit and $w\neq 0$. 
By Lemma \ref{l:first row} we can always assume $1\leq w<\sqrt a$.
Note that we can also exclude the cases when our sum is not primitive. In the following, we assume $a\geq 8$.

\subsubsection{The case $1+\ve$}
In this case, we have $\ve\neq 1$ since otherwise we get $2$, which is not primitive. Thus Lemma \ref{lemma:units>a^2} gives us that one conjugate of $\ve$ is greater than $>a^2$. Without loss of generality, we can assume that $\ve>a^2$. Using \eqref{eq:nsrel} for $\alpha=1,\beta=\ve$ we see that $\alpha'\alpha''\beta=\ve>a^2$ which is a summand in $\Tr(\alpha\alpha'\beta'')$. Thus necessarily $N(1+\ve)>a^2$.   

\subsubsection{The case $\alpha_w+\ve$}

First of all, let us discuss the case when $\ve=1$. Then
\[
N(\alpha_w+1)=-w^3-3w^2+a^2w+3aw+6w+2a^2+6a+17>a^2
\]
for all $1\leq w< \sqrt{a}$.

Let now $\ve\neq 1$. Lemma \ref{lemma:units>a^2} gives us that $\ve$ is $>a^2$ in some embedding. Moreover, we can easily show that for $a\geq 7$,
\begin{align*}
-w\rho+\rho^2&>a^2+2a+1-w(a+2),\\
-w\rho'+\rho'^2&>w+1,\\
-w\rho''+\rho''^2&>\frac{w}{a+3}.
\end{align*}
Furthermore, we can see that $a^2+2a+1-w(a+2)>a+3$ for all $1\leq w< \sqrt{a}$.

If $\ve'>a^2$, then $\alpha_w(\alpha_w)''\ve'>a^2$, and if $\ve''>a^2$, then $\alpha_w(\alpha_w)'\ve''>a^2$, which for both of these cases implies that the considered norm is $>a^2$. 

However, we cannot straightforwardly make the same statement for the case when $\ve>a^2$. Lemma \ref{lemma:a^4/a^2+a^2} gives us that for almost all the totally positive units, either $\ve>a^4$ (in which case $(\alpha_w)'(\alpha_w)''\ve>a^2$), or $\ve'$ or $\ve''$ is greater than $a^2$, so we can use the previous lines to conclude that this norm is $>a^2$. So we are left with units $\rho^2$ and $\rho''^{-2}$. The case when $\ve=\rho^2$ is included in Proposition \ref{pr:sum alpha}. On the other hand, using an explicit expression for $N(\alpha_w+\rho''^{-2})$, it can be shown that it is $>a^2$.

\subsubsection{The case $\alpha_w+\ve\alpha_t$} 

The choice $\ve=1$ is included in Proposition \ref{pr:sum alpha}, so we do not need to consider it here. 

In the other cases, we have 
\begin{align*}
(\alpha_w)'(\alpha_w)''\ve\alpha_t>a^2& \quad \text{ if } \ve>a^2, \\
\alpha_w(\alpha_w)''(\ve\alpha_t)'>a^2& \quad \text{ if } \ve'>a^2, \\ 
\alpha_w(\alpha_w)'(\ve\alpha_t)''>a^2& \quad \text{ if } \ve''>a^2. 
\end{align*}

Thus the norm of $\alpha_w+\ve\alpha_t$ is always too large.

\subsubsection{The case $\alpha_w+\ve(\alpha_t)'$}
If $\ve=1$, it can be easily shown that the summand $\alpha_w(\alpha_w)'(\alpha_t)''$ in $N(\alpha_w+(\alpha_t)')$ is greater than $a^2$, and thus this norm is $>a^2$.

If $\ve\neq 1$, we can directly decide the following cases:
\begin{enumerate}
\item If $\ve''>a^2$, then $\alpha_w(\alpha_w)'(\ve\alpha_t)''>a^2$.
\item If $\ve>a^4$, then $(\alpha_w)'(\alpha_w)''\ve\alpha_t>a^2$.
\item If $\ve'>a^4$, then $\alpha_w(\alpha_w)''(\ve\alpha_t)'>a^2$.
\item If $\ve>a^2$ and $\ve'>a^2$, then $(\alpha_w)''\ve\alpha_t(\ve\alpha_t)'>a^2$. (Here we use the facts that $t+1\geq 2$ and $2a^2>(a+3)^2$ for $a\geq 8$.)
\end{enumerate}  
Thus it remains to discuss the cases when $\ve$ or $\ve'$ are equal to $\rho^2$ or $\rho''^{-2}$. Using the explicit expression for these norms, it can again be shown that all are $>a^2$.
   
\medskip

We showed that in all cases except for Proposition \ref{pr:sum alpha}, the considered sums have norm $>a^2$. This finishes the proof of Theorem \ref{th count}.

\bigskip

Note that similarly, one can also use such estimates to determine the largest norm of an indecomposable, following the numerous results for real quadratic fields \cite{DS,HK,JK,Ka2,TV}. One obtains that if $a\geq 4$ then the largest norm is $\sim a^4/27$ and corresponds to the indecomposable nearest to the center of the triangle $\bt$
(the details are quite technical, and so we leave them to a follow-up article).

Further, let us remark that Valentin Blomer [personal communication] can use Theorem \ref{th count} to prove that a
positive density of indecomposables from the triangle $\bt$ have squarefree norms. This is often useful for dealing with universal forms; cf. Table \ref{table sqfree}.

An interesting problem is to see how far one can push this method: e.g., can one also count elements up to norm $<a^4\sim\Delta$? We do not see any fundamental obstacle to this, except for the fact that the number of decompositions that need to be considered quickly increases with the norm.

\section{Universal forms over simplest cubic fields} \label{sec:qforms}

Now we can apply our results on indecomposables to universal quadratic forms. 

Let $K$ be a totally real number field of degree $d$ over $\Q$ and $\co$ an order in $K$.
Let us consider a quadratic form $Q$ of rank $r$ over $\co$, i.e., $Q(x)=\sum_{1\leq i\leq j\leq r}a_{ij}x_ix_j$ with $a_{ij}\in\co$. 
Such a form $Q$ is \emph{totally positive}  if $Q(v)\succ 0$ for all $v\in\co^r, v\neq 0$. The form $Q$ is \textit{diagonal} if $a_{ij}=0$ for all $i\neq j$, and \textit{classical} (or \textit{classically integral}) if $2\mid a_{ij}$ in $\co$ for all $i\neq j$.

The quadratic form $Q$ \textit{represents} an element $\alpha\in\co^+$ over the order $\co$ if  $Q(v)=\alpha$ for some $v\in \co^r$. We say that $Q$ is \emph{universal over $\co$} if it is totally positive and represents every element  $\alpha\in\co^+$ over $\co$. When dealing with the maximal order $\co_K$, we often just say that $Q$ is universal (or universal over $K$ to specify the number field).

\medskip

We will work in the language of quadratic lattices in some of the proofs below, so let us briefly introduce it:
A \emph{totally positive quadratic space over $K$} is an $r$-dimensional vector space $V$ over the field $K$ equipped with a symmetric bilinear form $B: V\times V\to K$ such that the associated quadratic form $Q(v)=B(v,v)$ satisfies $Q(v)\succ 0$ for all non-zero $v\in V$. 
A \emph{quadratic $\co$-lattice} $L\subset V$ is an $\co$-submodule such that $KL=V$; $L$ is equipped with the restricted quadratic form $Q$, and so we often talk about a quadratic $\co$-lattice as the pair $(L,Q)$.
A \emph{sublattice} of the lattice $(L,Q)$ is an $\co$-submodule equipped with the restriction of the quadratic form $Q$.
Note that to a totally positive quadratic form $Q$ over $\co$ we can naturally associate the
quadratic $\co$-lattice $(\co^r,Q)$, and so we will interchangeably talk of $Q$ as a quadratic form and as an $\co$-lattice.

\subsection{General results}

Let us first prove several general Propositions that relate indecomposables and elements of small trace to ranks of universal forms.

Recall that the Pythagoras number of a ring $R$ is defined as the smallest integer $s(R)$ such that 
every sum of squares of elements of $R$ can be expressed as the sum of $s(R)$ squares. It is well-known that the Pythagoras number of an order $\co$ in a totally real number field $K$ is always finite (although it can be arbitrarily large \cite{Sc2}) and that $s(\co)\leq f(d)$ where $f(d)$ is a function that depends only on the degree $d$ of $K$ \cite[Corollary 3.3]{KY}.
Moreover, one can take $f(d)=d+3$ for $d=2,3,4,5$.

\pagebreak

\begin{proposition}\label{pr:univ construct}
	Let $K$ be a totally real number field and $\co$ an order in $K$. Let $s=s(\co)$ be the Pythagoras number of $\co$.
	Let $\mathcal S$ denote a set of representatives of classes of indecomposables in $\co$ up to multiplication by squares of units $\co^{\times 2}$.
	Then the diagonal quadratic form 
	$$\sum_{\sigma\in\mathcal S}\sigma\left(x^2_{1,\sigma}+x^2_{2,\sigma}+\dots+x^2_{s,\sigma} \right)$$
	is universal over $\co$ (and has rank $s\cdot\#\mathcal S$).
\end{proposition}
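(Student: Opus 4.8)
The plan is to show that the displayed diagonal form represents every $\alpha\in\co^+$ by first reducing to the case of a single indecomposable and then invoking the Pythagoras number. First I would recall the basic fact that every totally positive integer $\alpha\in\co^+$ can be written as a finite sum of indecomposables: indeed, if $\alpha$ is itself indecomposable we are done, and otherwise we write $\alpha=\beta+\gamma$ with $\beta,\gamma\in\co^+$ and induct; the process terminates because the trace strictly decreases at each splitting (each totally positive integer has positive trace, and $\Tr(\beta),\Tr(\gamma)\geq 1$), so we get $\alpha=\sum_j \iota_j$ with each $\iota_j$ indecomposable. Thus it suffices to prove that the form represents every indecomposable, since a sum of represented elements is represented by taking a direct sum of the representing vectors (the form is diagonal, so its values are closed under addition of squares).

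Next I would exploit the grouping of the variables by $\sigma\in\mathcal S$. Fix an indecomposable $\iota$. By definition of $\mathcal S$ there is a representative $\sigma\in\mathcal S$ and a unit $u\in\co^\times$ with $\iota=u^2\sigma$. Because $K$ is totally real (and in our situation has units of all signatures), one checks that $u^2$ is a totally positive unit; more to the point, $\iota/\sigma=u^2$ is a \emph{square} in $\co$. The idea is then to use only the block of variables $x_{1,\sigma},\dots,x_{s,\sigma}$ attached to this particular $\sigma$: I want to represent $\iota=\sigma\cdot u^2$ by $\sigma(x_{1,\sigma}^2+\dots+x_{s,\sigma}^2)$, i.e.\ I need to write $u^2$ as a sum of at most $s$ squares in $\co$. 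But $u^2$ is a single square, hence trivially a sum of (at most) $s$ squares, so setting $x_{1,\sigma}=u$ and the remaining variables in that block to $0$ already gives $\sigma\cdot u^2=\iota$.

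It then remains to assemble the global representation: for a general $\alpha=\sum_j \iota_j$, each $\iota_j$ is handled in the block corresponding to its class $\sigma(j)\in\mathcal S$ as above, and summing the squares within each block (which is where the Pythagoras number $s=s(\co)$ genuinely enters, guaranteeing that the sum of the various $u_j^2$ landing in the same block is itself a sum of at most $s$ squares in $\co$) we realize $\alpha$ as a value of the form. Total positivity of the form is immediate since each coefficient $\sigma$ is a totally positive indecomposable and the form is a positive combination of squares. The rank count is clear: there are $\#\mathcal S$ blocks of $s$ variables each.

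The main obstacle, and the only nontrivial input, is precisely the use of the Pythagoras number: when several indecomposables in the decomposition of $\alpha$ lie in the same class $\sigma$, the contributions $u_j^2$ accumulate into a sum $\sum_j u_j^2$ in a single block, and one must know that \emph{any} sum of squares of elements of $\co$ (not just a single square) can be rewritten using only $s(\co)$ squares — this is exactly the finiteness of the Pythagoras number, which the excerpt supplies. I would make sure to state carefully that the relevant quantity being re-expressed is a sum of squares of elements of $\co$ (so that $s(\co)$ applies), and to confirm that the unit factors $u_j^2$ are totally positive so that the identity $\iota_j=\sigma u_j^2$ is an honest equality of totally positive integers with $u_j^2\in\co$.
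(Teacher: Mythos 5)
Your proposal is correct and follows essentially the same route as the paper: decompose $\alpha$ into a sum of indecomposables, group them by their class modulo squares of units so that the block attached to $\sigma\in\mathcal S$ contributes $\sigma\sum_j u_j^2$, and invoke the Pythagoras number $s(\co)$ to rewrite each such sum of squares of units as a sum of $s$ squares in $\co$. The only blemish is the early throwaway claim that it ``suffices'' to represent each indecomposable individually because the value set of a diagonal form is closed under addition (it is not, for a fixed form), but your final assembly paragraph supersedes this and gives the actual, correct argument, matching the paper's.
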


\begin{proof}
	Every $\alpha\in\co^+$ can be written as the sum of indecomposables (typically not in a unique way). Combining indecomposables that are the same modulo $\co^{\times 2}$, we obtain 
	$$\alpha=\sum_{\sigma\in\mathcal S}\sigma\beta_\sigma,$$
	where each $\beta_\sigma$ is the sum of squares of units.
	
	As the Pythagoras number is $s$, the sum of squares
    $\beta_\sigma$ is represented by $x^2_{1,\sigma}+x^2_{2,\sigma}+\dots+x^2_{s,\sigma}$.
\end{proof}

The proofs of the following three Propositions are essentially due to Yatsyna (cf. \cite[Corollary 25]{Ya}), but let us include them here for completeness. In all of them, we have the following set-up:

\medskip

Let $K$ be a totally real number field of degree $d$ over $\Q$ and $\co$ an order in $K$. 
Assume that there are
\begin{itemize}
	\item $\delta\in\co^{\vee,+}$,
	\item $n$ elements $\alpha_1,\dots,\alpha_n\in\co^+$ such that $\Tr(\delta\alpha_i)=1$ for each $i$,
	\item $m$ indecomposable elements $\beta_1,\dots,\beta_m\in\co^+$ such that $\Tr(\delta\beta_i)=2$ for each $i$.
\end{itemize}

\begin{proposition}\label{pr:univ upper class}
	Every classical universal quadratic form over $\co$ has rank at least $n/d$.
\end{proposition}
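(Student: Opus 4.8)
The plan is to exploit the element $\delta\in\co^{\vee,+}$ together with the trace-one elements $\alpha_1,\dots,\alpha_n$ as a source of linear constraints. Suppose $Q$ is a classical universal form of rank $r$ over $\co$, realized as the quadratic $\co$-lattice $(\co^r,Q)$ with associated symmetric bilinear form $B$. Since $Q$ is universal, each $\alpha_i$ is represented, say $\alpha_i=Q(v_i)=B(v_i,v_i)$ for some $v_i\in\co^r$. The idea is to track the integer $\Tr(\delta B(v_i,v_i))=\Tr(\delta\alpha_i)=1$ and argue that the $v_i$ cannot be too numerous without collision, forcing $r$ to be large.

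Concretely, I would consider the composite bilinear form $\widetilde B(u,w)=\Tr(\delta B(u,w))$ on the $\Z$-module $\co^r$. Because $\delta$ is totally positive and $B$ is totally positive definite over $K$, the form $\widetilde B$ is a positive definite symmetric $\Z$-bilinear form on a free $\Z$-module of rank $rd$ (here $\co$ has $\Z$-rank $d$). For each representing vector we have $\widetilde B(v_i,v_i)=\Tr(\delta\alpha_i)=1$, so every $v_i$ is a vector of \emph{norm one} in this positive definite lattice. In a positive definite integral lattice (and classicality of $Q$ is what guarantees $\widetilde B$ takes integer values and is an honest integral lattice, rather than merely half-integral), the vectors of norm $1$ are extremely constrained: they must be pairwise orthogonal, since $\widetilde B(v_i\pm v_j,v_i\pm v_j)=2\pm 2\widetilde B(v_i,v_j)\geq 0$ forces $|\widetilde B(v_i,v_j)|\leq 1$, and a norm-one vector in an integral lattice spans an orthogonal direct summand isometric to $\Z$. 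Hence the span of the $v_i$ contains a sublattice isometric to an orthogonal sum of copies of $\Z$, and the number of \emph{distinct} orthogonal norm-one directions is at most the rank $rd$ of the ambient lattice.

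The remaining point is to pass from "the $v_i$ occupy distinct orthogonal norm-one directions" to a clean count. After collecting the norm-one vectors arising from the $\alpha_i$ into a maximal orthonormal set, one sees that there can be at most $rd$ of them; but we must also ensure the $n$ elements $\alpha_i$ genuinely force $n$ distinct such directions rather than collapsing. This follows because if $v_i=\pm v_j$ then $\alpha_i=B(v_i,v_i)=B(v_j,v_j)=\alpha_j$, so distinct values $\alpha_i$ give vectors that are not negatives of one another; combined with orthogonality this yields $n$ distinct elements of a norm-one orthonormal system, whence $n\leq rd$, i.e. $r\geq n/d$.

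I expect the main obstacle to be the careful use of \emph{classicality}: it is precisely the condition $2\mid a_{ij}$ that makes $B(u,w)\in\co$ for all lattice vectors (not just $2B(u,w)\in\co$), so that $\widetilde B(u,w)=\Tr(\delta B(u,w))\in\Z$ and $\widetilde B$ is a genuine integral lattice in which the norm-one rigidity argument applies. Without classicality one only controls the diagonal, the off-diagonal pairings $\widetilde B(v_i,v_j)$ may be half-integers, and the orthogonal-splitting argument breaks down — which is exactly why the weaker non-classical bound (of square-root type, via a different packing/geometry-of-numbers estimate) appears separately in Theorem~\ref{thm:main univ}. The rest is routine positive-definite lattice theory.
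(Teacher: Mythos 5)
Your proposal is correct and follows essentially the same route as the paper: pass to the positive definite integral $\Z$-lattice of rank $rd$ given by $\Tr(\delta Q)$, observe that the representing vectors of the $\alpha_i$ are norm-one vectors, show they are pairwise orthogonal (you expand $\widetilde B(v_i\pm v_j,v_i\pm v_j)$ where the paper invokes Cauchy--Schwarz, but the integrality-plus-distinctness argument ruling out $v_i=\pm v_j$ is identical), and conclude $n\leq rd$. Your remark that classicality is exactly what makes the off-diagonal pairings integral matches the paper's reasoning as well.
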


\begin{proof} Let $Q$ be a classical universal quadratic form of rank $r$, and let $v_i\in\co^r$ be such that $Q(v_i)=\alpha_i$.
	
	Consider the quadratic form $q(x_{11},\dots,x_{rd})=\Tr(\delta Q(x_1,\dots,x_r))$. This is a positive definite classical quadratic form over $\Z$ of rank $rd$, as can be seen by expressing each $x_i=\sum_{j=1}^d x_{i1}\omega_1+\dots+x_{id}\omega_d$ as a $\Z$-linear combination of a fixed integral basis $\omega_1,\dots,\omega_d$ for $\co$.
	
	To the vector $v_i$ corresponds a vector $w_i\in\Z^{rd}$ such that 
	$$q(w_i)=\Tr(\delta Q(v_i))=\Tr(\delta \alpha_i)=1.$$
	
	Thus each $w_i$ is a minimal vector of norm 1 of the quadratic lattice $L$ corresponding to $q$. 
	The vectors $w_i$ are pairwise orthogonal,
	as by Cauchy-Schwarz inequality, we have $1=q(w_i)q(w_j)\geq b(w_i,w_j)^2$, and  so (since $b(w_i,w_j)\in\Z$) we have $b(w_i,w_j)=0$ (i.e., $w_i$ is orthogonal to $w_j$) or $b(w_i,w_j)=\pm 1$ (i.e., $w_i=\pm w_j$ -- but this is not possible as $\alpha_i=Q(v_i)\neq Q(v_j)=\alpha_j$).

	Thus the number $n$ of these pairwise orthogonal vectors is less or equal to the rank of $q$, i.e., $n\leq rd$.
\end{proof}

Using the number $m$ of trace two elements one can obtain good estimates at least for diagonal forms.

\begin{proposition}\label{pr:univ upper diag}
	Every diagonal universal quadratic form over $\co$ has rank at least $m/M(d)$, where $M(d)$ is defined in Table $\ref{table lattices}$ and satisfies $M(d)\leq\max(240,d(d-1))$.
\end{proposition}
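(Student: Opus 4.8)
The plan is to mimic the proof of Proposition \ref{pr:univ upper class}, but now using the $m$ trace-two indecomposables $\beta_1,\dots,\beta_m$ in place of the trace-one elements, and exploiting diagonality so that a representation of each $\beta_i$ produces vectors of $q$-norm $2$ in a lattice over $\Z$. First I would set up the same ambient quadratic form: given a diagonal universal form $Q$ of rank $r$ over $\co$, write $q(x_{11},\dots,x_{rd})=\Tr(\delta Q(x_1,\dots,x_r))$, a positive definite integral quadratic form over $\Z$ of rank $rd$ obtained by expanding each coordinate $x_i$ in a fixed integral basis $\omega_1,\dots,\omega_d$ of $\co$. For each $\beta_i$ choose $v_i\in\co^r$ with $Q(v_i)=\beta_i$; the corresponding $w_i\in\Z^{rd}$ then satisfies $q(w_i)=\Tr(\delta\beta_i)=2$, so each $w_i$ is a vector of norm $2$ in the lattice $L$ attached to $q$.

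The key new idea is to bound $m$ by the maximal number of norm-$2$ vectors that can arise this way, which reduces to a lattice-theoretic counting problem governed by the quantity $M(d)$ from Table \ref{table lattices}. Next I would observe that distinct $\beta_i$ give genuinely distinct vectors $w_i$ (up to sign), since $\beta_i=Q(v_i)\neq Q(v_j)=\beta_j$, and that the pairwise inner products $b(w_i,w_j)\in\Z$ are constrained by Cauchy--Schwarz: from $q(w_i)=q(w_j)=2$ we get $b(w_i,w_j)^2\le 4$, so $b(w_i,w_j)\in\{0,\pm1,\pm2\}$, with $\pm2$ ruled out (it would force $w_i=\pm w_j$, hence $\beta_i=\beta_j$). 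The crucial point is that the $w_i$ all lie in a \emph{single} component of the lattice determined by $Q$: because $Q$ is diagonal, $q$ splits as an orthogonal sum of $r$ rank-$d$ sublattices (one for each diagonal coefficient), and each representation $Q(v_i)=\beta_i$ of an \emph{indecomposable} $\beta_i$ must be supported on just one of these blocks, since a totally positive representation of an indecomposable cannot split nontrivially across an orthogonal sum without decomposing $\beta_i$. Thus the $m$ vectors $w_i$ distribute among $r$ rank-$d$ lattices.

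I would then invoke the extremal lattice-theoretic bound encoded in $M(d)$: in any positive definite $\Z$-lattice of rank $d$, the number of norm-$2$ vectors (up to sign, i.e. vectors realizing minimum $2$) is at most $M(d)$, which by classical results on the maximal number of minimal vectors (the kissing-number type bounds, with the value $240$ coming from the $E_8$ root system and $d(d-1)$ from the $A$/$D$-type root lattices) satisfies $M(d)\le\max(240,d(d-1))$. Since each of the $r$ blocks contributes at most $M(d)$ of the $w_i$, we obtain $m\le r\cdot M(d)$, i.e. $r\ge m/M(d)$, as claimed. The main obstacle I anticipate is the \emph{localization step}: carefully justifying that an indecomposable $\beta_i$, when represented by a diagonal form, must be represented within a single orthogonal block of the associated integral lattice (equivalently, that its norm-$2$ vector $w_i$ lies in one rank-$d$ summand), and then correctly matching this to the combinatorial definition of $M(d)$ and the uniform bound $\max(240,d(d-1))$. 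Establishing that the relevant configurations of norm-$2$ vectors are exactly root-system configurations, so that the sharp constants $240$ and $d(d-1)$ apply, is the delicate part; the Cauchy--Schwarz orthogonality argument and the rank count are then routine.
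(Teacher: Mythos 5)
Your proof is correct and takes essentially the same route as the paper's: diagonality together with the indecomposability of the $\beta_i$ confines each representation to a single rank-$d$ block $\Tr(\delta\gamma_j x_j^2)$, and the classification of norm-$2$ configurations as root systems gives the bound of $M(d)$ elements per block, hence $rM(d)\geq m$. The paper states the localization step directly as ``each $\beta_i$ is represented by some subform $\gamma_j x_j^2$'' and cites Martinet's classification of lattices generated by minimal vectors for the count $2M(R)$, but the substance is identical to your argument.
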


\begin{proof}
	Let $Q(x_1,\dots,x_r)=\sum \gamma_jx_j^2$ be a diagonal universal quadratic form of rank $r$.
	As all the elements $\beta_i$ are indecomposable, each of them is represented by some subform $\gamma_jx_j^2$.
	
	As in the proof of Proposition \ref{pr:univ upper class}, consider the quadratic form $$q(x_{11},\dots,x_{rd})=\Tr(\delta Q(x_1,\dots,x_r))=\sum_{j=1}^r \Tr(\delta\gamma_jx_j^2).$$
	
	Each of the classical forms $q_j(x_j)=\Tr(\delta\gamma_jx_j^2)$ is $d$-ary; 
	consider its sublattice spanned by the vectors that correspond to the representations of (some of) our elements
	$\beta_i$. This is a classical lattice of rank $\leq d$ with minimal vectors of norm 2. 
	
	Such a lattice is the direct sum of certain root lattices \cite[Theorem 4.10.6]{martinet}, and so we see from \cite[Table 4.10.13]{martinet} that the maximal possible number $2M(R)$ of norm 2 minimal vectors in a rank $R$ lattice is given by Table \ref{table lattices}. 
	(The situation is slightly more complicated in the omitted ranks $9\leq R\leq 15$, as one needs to take a suitable sum of these root lattices.)

\begin{table}[h]
	
	\caption{Numbers of minimal vectors\label{table lattices}}
		\begin{tabular}{|c|c|c|c|c|c|c|c|c|c|c|}
		\hline
		$R$ & 1 & 2 & 3 & 4  & 5  &  6 &  7 & 8 & $\dots$ & $16\leq R$ \\
		\hline
		$M(R)$ & 1 & 3 & 6 & 12 & 20 & 36 & 63 & 120 & $\dots$  & $R(R-1)$ \\
		\hline
		lattice &$A_1$& $A_2$& $A_3$& $D_4$& $D_5$& $E_6$& $E_7$& $E_8$&$\dots$& $D_R$\\
		\hline
	\end{tabular}
	
\end{table}

	To each element $\beta_i$ (that is represented by the subform $\gamma_jx_j^2$) correspond two minimal vectors $\pm w_i$ for the lattice $q_j$ of rank $\leq d$, and so 
	$\gamma_jx_j^2$ can represent at most $M(d)$ of our elements $\beta_i$. Therefore we have $rM(d)\geq m$, as we needed to prove.
\end{proof}

Similarly we can estimate the ranks of non-classical forms in terms of the number $n$ of trace one elements: 

\begin{proposition}\label{pr:univ upper noncl}
	If $n\geq 240$, then
	every (non-classical) universal quadratic form over $\co$ has rank at least $\sqrt n/d$.
\end{proposition}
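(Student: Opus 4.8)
If $n \geq 240$, then every (non-classical) universal quadratic form over $\co$ has rank at least $\sqrt{n}/d$.

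The plan is to mimic the strategy of Proposition \ref{pr:univ upper class}, passing from the universal form $Q$ over $\co$ to the trace form $q(x) = \Tr(\delta Q(x))$ over $\Z$, but now account for the fact that $Q$ is non-classical, so the associated lattice need only be integral rather than even. First I would fix a non-classical universal form $Q$ of rank $r$ and, for each $i$, choose $v_i \in \co^r$ with $Q(v_i) = \alpha_i$; passing to the trace form gives a positive definite $\Z$-lattice $L$ of rank $rd$ together with vectors $w_i$ satisfying $q(w_i) = \Tr(\delta\alpha_i) = 1$. Thus the $w_i$ are vectors of (minimal) norm $1$ in $L$. The key difference from the classical case is that now $L$ is only integral, not even, so two distinct minimal vectors $w_i, w_j$ need not be orthogonal: the Cauchy--Schwarz bound $b(w_i,w_j)^2 \leq q(w_i)q(w_j) = 1$ still forces $b(w_i,w_j) \in \{-1,0,1\}$, but the value $\pm 1$ is no longer automatically excluded, so the $w_i$ do not all pairwise-orthogonalize.

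The way around this is a kissing-number / sphere-packing count rather than an orthogonality argument. I would bound the number $n$ of norm-$1$ vectors $w_i$ in terms of the rank $rd$ of $L$. The cleanest route is to split into the case where $L$ genuinely contains many norm-$1$ vectors. Each norm-$1$ vector $e$ in an integral lattice spans an $A_1$ summand, and by decomposing $L$ (or by an explicit argument) one sees that the norm-$1$ vectors of an integral positive definite lattice of rank $N$ number at most $2N$ (they come in $\pm$ pairs spanning mutually orthogonal $A_1$'s, since any two norm-$1$ vectors with inner product $\pm 1$ coincide up to sign). This already gives $n \leq rd$, which is \emph{stronger} than the claimed $r \geq \sqrt{n}/d$; the genuine content — and the reason for the $\sqrt{n}$ and the hypothesis $n \geq 240$ — must instead come from the interplay with the quadratic structure, namely that the $n$ representations $Q(v_i) = \alpha_i$ with distinct $\alpha_i$ produce vectors that, while not norm-$1$ minimal vectors of the full trace lattice, must be accommodated within a rank-$r$ lattice over $\co$ whose own kissing number is controlled. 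Concretely, I expect one should count minimal vectors of $Q$ itself (an $\co$-lattice of rank $r$, hence a $\Z$-lattice of rank $rd$) using the universal bound $M(rd) \leq \max(240, rd(rd-1))$ from Table \ref{table lattices} as invoked in Proposition \ref{pr:univ upper diag}: the $n$ distinct representing vectors are minimal vectors, so $n \leq M(rd) \leq rd(rd-1) < (rd)^2$ once $n \geq 240$ pushes us past the exceptional $E_8$ bound of $240$. Solving $n < (rd)^2$ yields $\sqrt{n} < rd$, i.e. $r > \sqrt{n}/d$, as required.

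The main obstacle will be justifying that the relevant vectors are minimal vectors of a \emph{single} integral lattice to which the table bound $M(R) \leq \max(240, R(R-1))$ applies, and correctly tracking which lattice this is. In the diagonal case of Proposition \ref{pr:univ upper diag} the minimal vectors sit inside each $d$-ary diagonal block, but for a general (non-diagonal, non-classical) form the right object is the full trace lattice of rank $rd$, and one must verify that the $n$ vectors $w_i$ arising from the trace-one elements are genuinely its minimal vectors (of norm equal to the minimum) so that the kissing-number bound $\#\{\text{minimal vectors}\} \leq 2M(rd)$, with $M(R) \leq \max(240, R(R-1))$, is legitimately available. The hypothesis $n \geq 240$ is precisely what lets us ignore the finitely many exceptional dense lattices (the $E_8$ and Leech-type cases bounded by $240$) and use the clean polynomial bound $R(R-1)$, from which the inequality $r \geq \sqrt{n}/d$ falls out after a one-line algebraic manipulation.
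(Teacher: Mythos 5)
You have correctly located the difficulty (a non-classical $Q$ gives a trace form $q=\Tr(\delta Q)$ that is integer-valued but whose associated bilinear form need not be), but your handling of it contains a genuine error. For a non-classical form the bilinear form $b$ of $q$ takes values in $\tfrac12\Z$, not in $\Z$, so Cauchy--Schwarz gives $b(w_i,w_j)\in\{0,\pm\tfrac12,\pm1\}$; the problematic value is $\pm\tfrac12$, which cannot be excluded, whereas $\pm1$ still forces $w_i=\pm w_j$ by the equality case of Cauchy--Schwarz. Consequently your claim that the norm-$1$ vectors of an integral positive definite lattice span mutually orthogonal $A_1$ summands and number at most $2N$ is false for non-classical lattices: rescaling $E_8$ by $\tfrac12$ gives an integer-valued non-classical form of rank $8$ with $240$ vectors of norm $1$. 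This example is precisely why the hypothesis $n\geq 240$ appears and why the conclusion is only $\sqrt n/d$ rather than $n/d$; the ``stronger bound $n\leq rd$'' you derive is simply not available.

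Your second attempt is much closer to the paper's argument, but it omits the one step that makes the kissing-number bound legitimate: the bound $M(R)$ of Table \ref{table lattices} comes from the classification of \emph{classical} lattices whose minimal vectors have norm $2$ (root lattices), so it does not directly apply to the non-classical lattice $q$ with minimum $1$. The paper's fix is to pass to $2\Tr(\delta Q)$, which \emph{is} a classical $\Z$-lattice of rank $rd$ whose minimal vectors $\pm w_i$ have norm $2$; the $n$ distinct pairs $\pm w_i$ then give $M(rd)\geq n\geq 240$, which first forces $rd\geq 16$ (for $R\leq 15$ a sum of root lattices yields at most $M(E_8)+M(E_7)=183<240$ pairs), and then $n\leq M(rd)=rd(rd-1)<(rd)^2$ gives $r>\sqrt n/d$. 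Once you insert this doubling step, drop the false $n\leq rd$ claim, and count pairs rather than vectors, your sketch becomes the paper's proof.
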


\begin{proof}
	If $Q$ is a (non-classical) universal form of rank $r$, then $2\Tr(\delta Q)$ is a classical form of rank $rd$ whose minimal vectors have norm 2. 
	We can now argue in the same way as in the proof of Proposition \ref{pr:univ upper diag} and note that we have $M(rd)\geq n\geq 240$ to conclude that the rank $rd\geq 16$. Thus $(rd)^2>rd(rd-1)\geq M(rd)\geq n$, as needed.
\end{proof}

Note that in the last Proposition, if $n<240$, then we can use the estimate that $M(R)<2R^2$ for all $R$ to conclude that the rank must be $>\frac{\sqrt n}{d\sqrt 2}$.

\subsection{Simplest cubic fields}

In this Subsection, let $K$ be the simplest cubic field (with the parameter $a$) such that its ring of integers $\co_K=\Z[\rho]$ (this is, e.g., true if the square root of the discriminant $\sqrt\Delta=a^2+3a+9$ is squarefree).

In Theorem \ref{thm:main} we explicitly described the set $\mathcal S$ of indecomposables (modulo squares of units); we see that $$\#\mathcal S=\frac{a^2+3a+6}2.$$

As the Pythagoras number $s$ in degree $d=3$ satisfies $s\leq 6$, we conclude  from Proposition \ref{pr:univ construct} that there is a diagonal universal form of rank $6\#\mathcal S=3(a^2+3a+6)$.

\medskip

In Lemma \ref{lemma:indetria} we showed that all the elements from the triangle of indecomposables have trace 1 (after multiplication by  suitable $\delta\in\co_K^{\vee,+}$). Further, we also saw in Subsection \ref{subsec:triangle} that the units $\rho^2=\alpha(0,0),\alpha(-1,a+1),\alpha(a+1,0)$ also give trace 1 after multiplication by this $\delta$.

Thus we can take $$n=\frac{a^2+3a+8}2.$$ 
By Propositions \ref{pr:univ upper class} and \ref{pr:univ upper noncl} this concludes the proof of Theorem \ref{thm:main univ}.

\medskip

Note that one can get a better constant for the lower bound on the ranks of diagonal universal forms by the following argument: 

Let $Q(x_1,\dots,x_r)=\sum \gamma_jx_j^2$ be a diagonal universal quadratic form of rank $r$.
Let $\alpha$ be an indecomposable that is squarefree in the sense that $\beta^2\mid \alpha$ implies $\beta\in\co_K^{\times}$. Then $\alpha$ equals $\gamma_j$ (up to multiplication by a unit) for some $j$. Thus the number of squarefree indecomposables up to units gives a lower bound on the rank of diagonal universal forms. 

In Table \ref{table sqfree} we computed the number $\text{sq}(a)$ of indecomposables (up to multiplication by units) that have squarefree norm.
Note that in total there are $\sim a^2/2$ indecomposables, and so vast majority of them indeed have squarefree norm.

\begin{center}
	
\begin{table}[h]
	\caption{The number of non-associated indecomposables with squarefree norm.\label{table sqfree}}
	
	\begin{tabular}{|c|c|}
		\hline
		$a$ & $\text{sq}(a)$\\
		\hline
		-1 & 2 \\ 
		
		0 & 2 \\ 
		
		1 & 5 \\ 
		
		2 & 8 \\ 
		
		4 & 17 \\ 
		
		6 & 22 \\ 
		
		7 & 38 \\ 
		
		8 & 47 \\ 
		
		9 & 46 \\ 
		
		10 & 68 \\ 
		
		11 & 59 \\ 
		\hline
	\end{tabular}
	\begin{tabular}{|c|c|}
		\hline
		$a$ & $\text{sq}(a)$ \\
		\hline
		
		13 & 101 \\ 
		
		14 & 122 \\ 
		
		15 & 118 \\ 
		
		16 & 110 \\ 
		
		17 & 158 \\ 
		
		18 & 166 \\ 
		
		19 & 209 \\ 
		
		20 & 224 \\ 
		
		22 & 272 \\ 
		
		23 & 272 \\ 
		
		24 & 265 \\
		\hline
	\end{tabular}
	\begin{tabular}{|c|c|}
		\hline
		$a$ & $\text{sq}(a)$ \\
		\hline
		
		25 & 341 \\ 
		
		26 & 275 \\ 
		
		27 & 346 \\ 
		
		28 & 404 \\ 
		
		29 & 455 \\ 
		
		31 & 404 \\ 
		
		32 & 539 \\ 
		
		33 & 517 \\ 
		
		34 & 593 \\ 
		
		35 & 614 \\ 
		
		36 & 496 \\ 
		\hline
	\end{tabular}
	\begin{tabular}{|c|c|}
		\hline
		$a$ & $\text{sq}(a)$\\
		\hline
		
		37 & 575 \\ 
		
		38 & 755 \\ 
		
		40 & 839 \\ 
		
		42 & 811 \\ 
		
		43 & 983 \\ 
		
		44 & 884 \\ 
		
		45 & 928 \\ 
		
		46 & 833 \\ 
		
		47 & 1157 \\ 
		
		49 & 1277 \\ 
		
		50 & 1166 \\ 
		\hline
	\end{tabular}
\end{table}

\end{center}

\subsection{Real quadratic fields}\label{subsec:7.3}

For comparison, let us describe the corresponding results for real quadratic fields (using the notation introduced in Section \ref{sec:quadr}).

By Proposition \ref{pr:quad} we have that $n=\max\{u_i\mid 1\leq i<s\text{ odd}\}+1$ if $s$ is even, and $n=\max\{u_i\mid 1\leq i<s\}+1=2u_{s-1}+1\asymp\sqrt\Delta$ if $s$ is odd, whereas the number of indecomposables is
\[
\#\mathcal S=\sum_{j=1}^s u_{2j-1}
=\begin{cases}
2(u_1+u_3+u_5+\dots+u_{s-1})&\mbox{if } s\mbox{ is even,}\\
2u_0+u_1+u_2+\dots+u_{s-1}&\mbox{if } s\mbox{ is odd}.
\end{cases}
\]
Blomer and Kala \cite[Theorem 2]{BK2} proved that $\#\mathcal S\ll \sqrt \Delta(\log \Delta)^2$, but getting better estimates is quite a subtle question.
We can take $5$ as an upper estimate on the Pythagoras number $s$.

While our upper bound from Proposition \ref{pr:univ construct} is essentially the same as \cite[Theorem 10]{BK2},
the lower bound from Proposition \ref{pr:univ upper class} is typically significantly stronger than the results from \cite{BK2}, as it deals not only with diagonal forms, and even often provides a better bound.

Nevertheless, typically the lower bound $n/2$ is not of the same order of magnitude as the upper bound $5\#\mathcal S$.

\medskip

In certain very special families, such as $D=\sqrt {t^2+1}$ with odd $t$, these estimates are quite sharp:
We have $\sqrt D=[t,\overline{2t}]$, and so $n=2t+1$ and $\#\mathcal S=2t$. Thus we for example have
that the smallest rank $r$ of a classical universal form satisfies $t+1\leq r\leq 10t$.

\section{Other families}\label{sec:other}

For comparison, in this Section we provide similar results for two families of non-Galois cubic fields that do not possess units of all signatures.
First we consider Ennola's family in which there are only $a\asymp \Delta^{1/4}$ indecomposables, most of which appear to have minimal trace 2. 
Even stranger is a family that was considered by Thomas, in which we obtain an indecomposable of minimal trace 3.
In both cases we obtain lower and upper bounds on the ranks of diagonal universal forms that are of magnitude $a\asymp \Delta^{1/4}$. It seems also possible to obtain an analogue of Theorem \ref{th count}, although this would require also determining indecomposables in other signatures, which we have not done (yet).

We provide only very brief sketches of the arguments; the full details will appear in a forthcoming paper.

\subsection{Ennola's cubic fields} \label{subsec:8.1}

A well-studied family are Ennola's cubic fields \cite{En,En1}, i.e., the fields generated by a root $\rho$ of the polynomial $f(x)=x^3+(a-1)x^2-ax-1$ with $a\geq 3$. The discriminant of this polynomial is
$$
a^4+6a^3+7a^2-6a-31,
$$
which is an irreducible polynomial in $a$. If its value is squarefree, then $\O_K=\Z[\rho]$; however, this can also occur for some other values of the discriminant (e.g., for $a=9$). For example, among $3\leq a \leq 1000$, we found $959$ fields having $\O_K=\Z[\rho]$, although it is an open problem whether there are infinitely many such $a$.
In contrast with the simplest cubic fields, Ennola's fields are not Galois, which somewhat (but not fatally) complicates some of our arguments concerning indecomposables. 

Ennola proved that if the index $[\co_K:\Z[\rho]]\leq \frac{a}{3}$, then $\rho$ and $\rho-1$ are fundamental units of $\co_K$. In particular, this is true if $\co_K=\Z[\rho]$. 

Since we do not know that $\co_K=\Z[\rho]$ happens infinitely many times, we will state our results for the order $\Z[\rho]$. Thomas \cite{Th} established that the units $\rho$ and $\rho-1$ form the system of fundamental units of $\Z[\rho]$; for more information see also \cite{Lo2,Lo3}. Note that $\rho$ and $\rho-1$ have the same signature and are not totally positive. Thus $\rho^2$ and $\rho(\rho-1)$ generate the group of totally positive units.

The first parallelepiped we consider is generated by the totally positive units $1$, $\rho^2$, and $\rho(\rho-1)$ and does not contain any interior element. On the other hand, the parallelepiped generated by $1$, $\rho^2$, and $\rho(\rho-1)^{-1}=1+a\rho+\rho^2$ 
produces non-unit elements of the form $1+w\rho+\rho^2$ where $1\leq w\leq a-1$. Now we will study them in more detail.

Let us consider $\delta=\frac{-(a-1)+(a-1)\rho+\rho^2}{f'(\rho)}\in\Z[\rho]^{\vee,+}$. It can be easily shown that
$
\Tr(\delta(v_1+v_2\rho+v_3\rho^2))=v_1+v_3.
$
Thus $\Tr(\delta(1+w\rho+\rho^2))=2$. The proof of the indecomposability of our elements is then analogous to the proof for the exceptional element in Subsection \ref{subsec:except}. 
If $1+w\rho+\rho^2$ were decomposable, it could be written as the sum of two totally positive elements of the form $v+t\rho+(1-v)\rho^2$. One of them would have to have $v\geq 1$ -- but the only such totally positive element is $1$. But then $1+w\rho+\rho^2-1$ is not totally positive.
One thus establishes:

\begin{proposition}
Let $\rho$ be a root of the polynomial $x^3+(a-1)x^2-ax-1$ where $a\geq 3$. Then $1$ and $1+w\rho+\rho^2$ where $1\leq w\leq a-1$ are, up to multiplication by totally positive units, all the indecomposable elements in $\Z[\rho]$. 
\end{proposition}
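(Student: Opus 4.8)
The plan is to mirror the two-step strategy already used for the simplest cubic fields: first enumerate the candidate indecomposables via the geometry-of-numbers / parallelepiped argument of Section~\ref{sec:parall}, and then verify that each candidate is genuinely indecomposable using a suitable element of the codifferent. The paragraphs preceding the statement have in fact already carried out most of this, so the proof is largely an assembly of those pieces together with an argument that \emph{no other} indecomposables exist.

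\medskip

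First I would establish that the totally positive units are generated by $\rho^2$ and $\rho(\rho-1)$, which follows from Thomas's result that $\rho,\rho-1$ are fundamental units of $\Z[\rho]$ together with the observation that neither is totally positive (they share a sign pattern), so their products $\rho^2$ and $\rho(\rho-1)$ give the totally positive subgroup. Then, exactly as in Section~\ref{sec:parall}, every indecomposable lies (up to multiplication by a totally positive unit) in one of two adjacent parallelepipeds spanned by $1$ together with a proper pair of fundamental totally positive units. The excerpt identifies these as the one generated by $1,\rho^2,\rho(\rho-1)$ and the one generated by $1,\rho^2,\rho(\rho-1)^{-1}=1+a\rho+\rho^2$. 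I would then compute the lattice points in each: the first parallelepiped contains no interior lattice point (hence yields only units and their sums), while the second yields precisely the elements $1+w\rho+\rho^2$ for $1\le w\le a-1$, by solving $t_1(1,0,0)+t_2(0,0,1)+t_3(1,a,1)=(m,n,o)$ with $t_i\in[0,1]$ and reading off the integrality constraints on the coordinates. This shows that the only non-unit \emph{candidates} are the listed elements.

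\medskip

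Second, I would prove indecomposability of each candidate $1+w\rho+\rho^2$. For this I take the stated codifferent element $\delta=\frac{-(a-1)+(a-1)\rho+\rho^2}{f'(\rho)}\in\Z[\rho]^{\vee,+}$ and verify the trace formula $\Tr(\delta(v_1+v_2\rho+v_3\rho^2))=v_1+v_3$, so that $\Tr(\delta(1+w\rho+\rho^2))=2$. Since this trace is $2$ rather than $1$, the one-line argument via trace one does not apply; instead I argue as in Lemma~\ref{lemma:indespec}: if $1+w\rho+\rho^2=\beta+\gamma$ with $\beta,\gamma$ totally positive, then both summands have trace $1$ against $\delta$ and hence are of the shape $v+t\rho+(1-v)\rho^2$, and one of them must have $v\ge 1$. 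A short check of total positivity forces this summand to equal $1$, whereupon $(1+w\rho+\rho^2)-1$ fails to be totally positive (because the relevant conjugate of $\rho$ lies in $(-1,0)$), a contradiction. Here I must also confirm that the listed elements are totally positive in the first place, which is automatic since they lie in the totally positive octant by construction.

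\medskip

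The main obstacle is the non-Galois nature of the field: unlike the simplest cubic case, conjugation does not permute the set of candidates, so the size and shape estimates on the conjugates of $\rho$ (needed both for the total-positivity checks and for excluding decompositions) must be derived directly from Ennola's defining polynomial rather than borrowed by symmetry. In particular, pinning down the interval in which the relevant conjugate of $\rho$ lies, so as to rule out total positivity of $(1+w\rho+\rho^2)-1$, requires an explicit root-location estimate for $x^3+(a-1)x^2-ax-1$. This is routine but is the step where the Galois convenience of the simplest cubic fields is genuinely lost and must be replaced by hands-on analysis.
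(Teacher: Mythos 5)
Your proposal is correct and follows essentially the same route as the paper: Thomas's fundamental units give $\rho^2$ and $\rho(\rho-1)$ as generators of the totally positive units, the two parallelepipeds yield $1+w\rho+\rho^2$ ($1\le w\le a-1$) as the only non-unit candidates, and indecomposability is verified with the codifferent element $\delta$ of trace pairing $v_1+v_3$ followed by the same exclusion argument as for the exceptional element $1+\rho+\rho^2$ in the simplest cubic case. Your closing remark that the non-Galois setting forces explicit root-location estimates for $x^3+(a-1)x^2-ax-1$ (in particular that the relevant conjugate of $\rho$ lies in $(-1,0)$, indeed in $(-1/a,0)$) correctly identifies the only place where extra hands-on work is needed beyond the paper's sketch.
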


For example, for $a=3$, we have $\O_K=\Z[\rho]$. Moreover, it can be computed by a program (we used Mathematica), that in fact $\min_{\delta\in\O_K^{\vee,+}}\textup{Tr}(\alpha\delta)= 2$ for every non-unit indecomposable $\alpha$ in $\Z[\rho]$ (for $a=3$). 
In fact, it seems likely that all non-unit indecomposables have the minimal trace 2 for all $a$, but we have not proved this yet.

\begin{cor} \label{cor:8.2}
	There is a diagonal universal form of rank $12a$ over $\Z[\rho]$.
Every diagonal universal form over $\Z[\rho]$ has rank at least $\frac{a-1}6$.
\end{cor}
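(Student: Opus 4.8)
The plan is to derive both bounds by feeding the indecomposables just determined into the general machinery of Propositions \ref{pr:univ construct} and \ref{pr:univ upper diag}. By the preceding Proposition, up to multiplication by totally positive units there are exactly $a$ indecomposables in $\Z[\rho]$, namely $1$ together with the $a-1$ elements $1+w\rho+\rho^2$ with $1\le w\le a-1$; moreover, the latter all satisfy $\Tr(\delta(1+w\rho+\rho^2))=2$ for the single element $\delta=\frac{-(a-1)+(a-1)\rho+\rho^2}{f'(\rho)}\in\Z[\rho]^{\vee,+}$ exhibited above. These two facts are precisely the inputs the general Propositions require.

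For the upper bound I would apply Proposition \ref{pr:univ construct}. The one point that genuinely differs from the simplest cubic case is the count of the representative set $\mathcal S$ of indecomposables up to \emph{squares} of units: in the simplest cubic fields every totally positive unit is a square, so $\mathcal S$ agrees with the list up to totally positive units, whereas here it does not. I would show that $\Z[\rho]^{\times 2}$ has index $2$ in the group $\Z[\rho]^{\times,+}=\langle\rho^2,\rho(\rho-1)\rangle$ of totally positive units. Writing $u=\rho$, $v=\rho-1$ for the fundamental units, $\Z[\rho]^{\times,+}$ corresponds (modulo torsion) to the lattice $\langle 2u,\,u+v\rangle$ and $\Z[\rho]^{\times 2}$ to $\langle 2u,\,2v\rangle$; these have index $2$ and $4$ respectively in $\langle u,v\rangle$, so $\Z[\rho]^{\times 2}$ has index $2$ in $\Z[\rho]^{\times,+}$, with $\rho(\rho-1)$ representing the nontrivial coset. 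Since the multiplicative action of the unit group on nonzero elements is free, each of the $a$ classes up to totally positive units splits into exactly two classes up to $\Z[\rho]^{\times 2}$, giving $\#\mathcal S=2a$. Proposition \ref{pr:univ construct} then yields a diagonal universal form of rank $s(\Z[\rho])\cdot\#\mathcal S$; enlarging each block from $s(\Z[\rho])$ to $6$ squares (legitimate since the Pythagoras number in degree $3$ satisfies $s\le 6$) keeps the form universal and brings the rank to $6\cdot\#\mathcal S=12a$.

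For the lower bound I would apply Proposition \ref{pr:univ upper diag} with $d=3$, for which $M(3)=6$ by Table \ref{table lattices}. Taking the $m=a-1$ pairwise distinct indecomposables $\beta_w=1+w\rho+\rho^2$ with $1\le w\le a-1$, all of which are indecomposable and satisfy $\Tr(\delta\beta_w)=2$ for the common $\delta$ above, the Proposition immediately gives that every diagonal universal form has rank at least $m/M(3)=(a-1)/6$.

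The only non-routine step is the index computation $[\Z[\rho]^{\times,+}:\Z[\rho]^{\times 2}]=2$ together with the verification that each indecomposable class truly splits into two; this is exactly where the failure of $\Z[\rho]$ to contain units of all signatures enters, and it is responsible for the factor of $2$ (hence rank $12a$ rather than the naive $6a$). Everything else is a direct invocation of the general Propositions.
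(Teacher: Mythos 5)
Your proposal is correct and follows essentially the same route as the paper: the upper bound comes from Proposition \ref{pr:univ construct} with $\#\mathcal S=2a$ (the $a$ classes up to totally positive units each splitting in two because the squares of units have index $2$ in $\Z[\rho]^{\times,+}$) and Pythagoras number at most $6$, while the lower bound comes from Proposition \ref{pr:univ upper diag} applied to the $m=a-1$ indecomposables $1+w\rho+\rho^2$ of minimal trace $2$ with $M(3)=6$. Your explicit lattice computation of the index $[\Z[\rho]^{\times,+}:\Z[\rho]^{\times 2}]=2$ merely spells out a step the paper asserts without detail, and it is correct.
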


\begin{proof}
	The existence of the universal form follows from Proposition \ref{pr:univ construct}: There are $a$ indecomposables up to multiplication by totally positive units, but squares have index 2 in the group of totally positive units, and so there are $2a$ indecomposables up to multiplication by the squares of units. We can take $6$ as an upper bound for the Pythagoras number.
	
	The lower bound follows from Proposition \ref{pr:univ upper diag}, as there are $m=a-1$ indecomposables of trace~2.
\end{proof}

\subsection{Example with trace $3$} \label{subsec:trace3}
Let us now discuss a family of orders which further differs from the simplest and Ennola's cubic fields. Let us consider a root $\rho$ of the polynomial $f(x)=x^3-(2a+2)x^2+a(a+2)x-1$ where $a\geq 2$. Note that this example comes from  \cite[Theorem (3.9)]{Th} by setting $r=a$ and $s=a+2$ in the latter family. We can also benefit from the knowledge of the systems of the fundamental units of $\Z[\rho]$: Thomas proved that it is formed by $\rho$ and $\rho-a$. Note that the first one is totally positive, the second one is not. The discriminant of $f$ is
$
4a^4+20a^3+28a^2-24a-59,
$ 
which is an irreducible polynomial in $a$. For example, among $2\leq a\leq 1000$, there are $920$ such fields with $\O_K=\Z[\rho]$. 

By again considering the parallelepipeds spanned by totally positive units, we find all the indecomposables:

\begin{proposition}
Let $\rho$ be a root of the polynomial $x^3-(2a+2)x^2+a(a+2)x-1$ where $a\geq 2$. Then $1$, $1-a\rho+\rho^2$, and 
\begin{align*}
((a+2)v+1)\rho-v\rho^2 &\quad \text{where } 1\leq v \leq a-1,\\
-1+((a+2)w+1)\rho-w\rho^2 &\quad \text{where } a\leq w \leq 2a-1
\end{align*} 
 are, up to multiplication by totally positive units, all the indecomposables in $\Z[\rho]$. 
\end{proposition}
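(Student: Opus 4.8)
The plan is to follow the two--stage strategy used for the simplest cubic fields in Sections~\ref{sec:parall} and~\ref{sec:indeco}: first cut down to a finite list of candidate indecomposables by a geometry--of--numbers argument, and then decide indecomposability of each candidate using the codifferent $\Z[\rho]^{\vee}$.

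\emph{Step 1 (totally positive units and parallelepipeds).} First I would determine the group of totally positive units. Since $\rho$ and $\rho-a$ are fundamental units with $\rho\succ 0$ but $\rho-a\not\succ 0$, the totally positive units form a subgroup of index~$2$, generated by $\rho$ and $(\rho-a)^2$; from these I would select a proper pair $\varepsilon_1,\varepsilon_2$ in the sense of Section~\ref{sec:parall}. I would then form the two neighbouring parallelepipeds $\mathcal D(1,\varepsilon_1,\varepsilon_2)$ and $\mathcal D(1,\varepsilon_1,\varepsilon_1\varepsilon_2^{-1})$, check that both lie in the totally positive octant, and enumerate their lattice points in the basis $1,\rho,\rho^2$ exactly as in Subsections~\ref{subsec:par1} and~\ref{subsec:par2}. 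This should return the exceptional element $1-a\rho+\rho^2$ from one parallelepiped and the two one--parameter families $((a+2)v+1)\rho-v\rho^2$ and $-1+((a+2)w+1)\rho-w\rho^2$ from the other, possibly together with some boundary candidates; the candidates that are units, sums of units, or otherwise decomposable would be removed by exhibiting explicit decompositions, in the style of the Lemma in Subsection~\ref{subsec:par2}.

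\emph{Step 2 (indecomposability of the two families).} For each of the two families separately I would produce a totally positive $\delta\in\Z[\rho]^{\vee,+}$ with $\Tr(\delta\alpha)=1$ for every $\alpha$ in that family, which forces indecomposability as in the Introduction. Requiring $\Tr(\delta\alpha)=1$ along a family imposes linear constraints on $\delta$ leaving one free parameter, and I would fix it so that $\delta$ becomes totally positive, checking positivity from the signs of the coefficients of the minimal polynomial of a suitable multiple of $\delta$ (Descartes' rule), exactly as in the proof of Lemma~\ref{lemma:indetria}. Note that a single totally positive $\delta$ cannot serve both families at once: the unique $\delta$ giving trace one on both has $\Tr(\delta)=0$ and is therefore not totally positive, which is precisely why the two families must be handled by different elements of the codifferent.

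\emph{Main obstacle.} The genuinely hard case is the exceptional element $\alpha_0=1-a\rho+\rho^2$, whose minimal trace is $>1$ (in fact $3$), so the trace--one trick is unavailable. Here I would argue directly, as in Subsection~\ref{subsec:except} and in the Ennola argument above: assuming $\alpha_0=\beta+\gamma$ with $\beta,\gamma\in\O_K^+$, I would use a totally positive $\delta$ of minimal trace together with the total positivity of all conjugates of $\beta$ and $\gamma$ to restrict the two summands to a short explicit list, and then eliminate each possibility because its complement fails to be totally positive, using the estimates on the sizes of the conjugates of $\rho$. The delicate point --- and the feature distinguishing this family --- is to show that no totally positive $\delta$ brings the trace of $\alpha_0$ below $3$; I expect this to need a positivity argument on the symmetric function $\delta\delta'+\delta'\delta''+\delta''\delta$ of the conjugates, analogous to the inequality $\psi\psi'+\psi'\psi''+\psi''\psi>0$ used in the proof of Lemma~\ref{lemma:indespec}.
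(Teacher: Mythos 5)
Your overall two-stage plan (Shintani parallelepipeds to cut the problem down to finitely many candidates, then the codifferent to certify indecomposability) is indeed the strategy the paper indicates for this Proposition, which it only sketches in one line (``by again considering the parallelepipeds spanned by totally positive units''). Step 1 of your proposal is consistent with that sketch, modulo the routine verifications you defer, and your identification of the totally positive units as generated by $\rho$ and $(\rho-a)^2$ is correct.

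However, Step 2 contains a genuine gap. You propose to prove indecomposability of each of the two one-parameter families by exhibiting $\delta\in\Z[\rho]^{\vee,+}$ with $\Tr(\delta\alpha)=1$ along the family. No such $\delta$ exists. The paper states that the elements $-1+((a+2)w+1)\rho-w\rho^2$ of the second family give trace $2$ after multiplication by a suitable $\delta$, and accordingly derives its rank bound from Proposition \ref{pr:univ upper diag} (the trace-two statement) rather than from Proposition \ref{pr:univ upper class}. More decisively, the entire point of Subsection \ref{subsec:trace3} is that the element $11\rho-2\rho^2$ --- which is the member $v=a-1$ of your \emph{first} family for $a=3$, not the element $1-a\rho+\rho^2$ --- has minimal trace $3$; this is exactly why the field violates Corollary \ref{cor:mintracesimpl}. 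So the trace-one trick is unavailable for the families, you have misattributed the trace-$3$ phenomenon to the wrong candidate, and your side remark that the only obstruction to a common $\delta$ is $\Tr(\delta)=0$ is moot. The indecomposability of the families must instead be established by a direct argument in the style of Lemma \ref{lemma:indespec} (rule out all candidate decompositions into totally positive summands) or some other device; as written, the central step of your proof would fail.
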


For the ranks of universal forms, this Proposition implies a similar Corollary as \ref{cor:8.2} (all the elements with $w$ on the second line give trace 2 after multiplication by suitable $\delta\in\Z[\rho]^{\vee,+}$ that is independent of $w$).

For $a=3$ we actually get $\O_K=\Z[\rho]$. For the indecomposable $11\rho-2\rho^2$ (i.e., $v=a-1=2$), we found an explicit element $\delta\in\Z[\rho]^{\vee,+}$ such that $\Tr(\delta(11\rho-2\rho^2))=3$. We further checked by a program in Mathematica that this trace is minimal for this specific indecomposable integer. So $\Q(\rho)$ is an example of a cubic field for which the statement of Corollary \ref{cor:mintracesimpl} is not true!

\medskip

Given a totally real number field $K$, denote $t(K)$ the maximum of these minimal traces of indecomposables in $\co_K^+$.
In Section \ref{sec:quadr} we established that $t(K)=1$ for every real quadratic field $K$. For cubic fields, in Section \ref{sec:indeco} we proved that $t(K)=2$ for every simplest cubic field with $\co_K=\Z[\rho]$; and now we found a field with $t(K)=3$. In particular, $t(K)$ does not depend only on the degree $d$ of $K$.

The behavior of $t(K)$ provides many exciting open problems: Can $t(K)$ be arbitrarily large? Is it bounded at least in a fixed degree? Is 3 the maximum of $t(K)$ over all cubic fields $K$?

\end{document}